\newtheorem{example}{Example}
\newtheorem{thm}{Theorem}[section]
\newtheorem{pro}{Proposition}[section]
\newtheorem{rem}{Remark}
\renewcommand{}
\newcommand{\nn}{\nonumber}
\newcommand{\ttau}{\Delta t}
\def\epsilon{\varepsilon} 
\newcommand{\mat}[1]{\boldsymbol{#1}}
\pgfplotsset{compat=1.18}
\begin{document}
\begin{frontmatter}
\title{
A sharp-interface approach for simulating solid-state dewetting of thin films with double-bubble structure}
\author[1]{Meng Li}
\author[1]{Nan Wang}
\address[1]{School of Mathematics and Statistics, Zhengzhou University,
Zhengzhou 450001, China.}
\ead{This author's research was supported by National Natural Science Foundation of China (Nos. 11801527,U23A2065,12001499), the China Postdoctoral Science Foundation (No. 2023T160589). 
limeng@zzu.edu.cn (Meng Li); 
Corresponding author: nwang@zzu.edu.cn}
\author[1]{Ruofan Zhao}
\author[1]{Chunjie Zhou}

\begin{abstract}
We develop a sharp-interface model for solid-state dewetting of double-bubble thin films using an energy variational approach based on a newly proposed interfacial energy. This model characterizes the dynamic evolution of interfaces in double-bubble thin films, a process primarily governed by surface diffusion and junction/contact points migration, and fundamentally distinct from the behavior observed in a single thin film. Subsequently, a structure-preserving parametric finite element approximation is developed for the sharp-interface model, which can preserve both area conservation and energy stability. Extensive numerical experiments are presented to demonstrate the convergence, structure-preserving properties, and superior mesh quality of the proposed method. Additionally, we investigate several specific evolution processes, including the equilibrium shapes of double-bubble thin films and the pinch-off dynamics of long islands.
\end{abstract}

\begin{keyword}  
Solid-state dewetting, anisotropic surface energy, double-bubble thin films, parametric finite element method, energy stability 
\end{keyword}


\end{frontmatter}
\section{Introduction}\label{sec1}
Solid-state dewetting (SSD) is a well-documented phenomenon in solid–solid–vapor systems, often used to describe the agglomeration process of solid thin films on substrates. When a solid film is deposited onto a substrate, it is typically in an unstable or metastable state due to the combined effects of surface tension and capillarity. This instability can drive complex morphological transformations, such as fingering instabilities \cite{Kan05,Ye10b,Ye11a,Ye11b}, edge retraction \cite{Wong00,dornel2006surface,hyun2013quantitative}, faceting \cite{Jiran90,Jiran92,Ye10a}, and pinch-off events \cite{Jiang12,Kim15}. These processes are crucial for understanding the dynamics of thin-film evolution and can significantly impact the development of material structures in various technological applications.
SSD has become integral to a variety of cutting-edge technologies \cite{Mizsei93,Armelao06,Schmidt09}, sparking significant interest and extensive research into its underlying mechanisms. 
These efforts include both experimental work \cite{Jiran90,Jiran92,Ye10a,Ye10b,Ye11a,Amram12,Rabkin14,Herz216,Naffouti16,Naffouti17,Kovalenko17} and theoretical studies \cite{Wong00,Dornel06,hyun2013quantitative,Jiang12,Jiang16,Kim15,Kan05,Srolovitz86a,Srolovitz86,Wang15,baojcm2022,Bao17,Bao17b,Zucker16}. 
Although several models for SSD have been proposed under the assumption of isotropic surface energy \cite{Srolovitz86,Wong00,Dornel06,Jiang12,Zhao20}, the dynamics of SSD are strongly influenced by crystalline anisotropy, as evidenced by experimental results \cite{Thompson12,Leroy16}. 
A more thorough understanding of how crystalline anisotropy affects SSD is crucial, as it not only produces distinct phenomena but also plays a pivotal role in leveraging dewetting processes to fabricate intermediate structures for device development. 
In recent years, significant efforts have been devoted to developing numerical methods for investigating the influence of surface energy anisotropy on SSD, as discussed in \cite{Dornel06,Pierre09b,Dufay11,Klinger11shape,Zucker13,Bao17,Jiang16,Wang15,Jiang19a,Zhao19b} and related works.

A droplet or soap bubble naturally forms a spherical shape to minimize surface area for a given volume. The soap bubble cluster extends this problem to minimizing surface area for multiple enclosed regions with fixed volumes. 
These minimization problems have received considerable attention in the literature, as seen in both theoretical studies \cite{amilibia2001existence,foisy1993standard,wichiramala2004proof,paolini2020quadruple,hutchings2002proof} and numerical investigations \cite{sullivan1996open,taylor1976structure,amilibia2001existence,morgan2007colloquium,morgan1998wulff,wecht2000double}. 
A natural question that arises is how two or more thin films will evolve during SSD on a substrate. The existing research on SSD is only limited to the evolution of a single thin film, leaving the more complex interactions in multi-film systems unexplored. Investigating this problem would provide a more comprehensive understanding of the dewetting process in multilayer film systems, especially in applications at the micro- and nanoscale. 

To model this kind of anisotropic thin film solid materials, we first define the following surface energy density function:
\begin{align}
    \gamma( \mat  n):  \mathbb S ^1 \rightarrow  \mathbb  R^+,
\end{align}
where $\mat n$ denotes the unit outer normal vector of the crystalline interface, and $\mathbb S^1$ represents the unit circle. To model anisotropic solid materials effectively and concisely, we introduce the Cahn-Hoffman $\mat \xi$-vector formulation, defined as \cite{Hoffman72,Cahn74,wheeler1996xi,wheeler1999cahn,Eggleston01}
\begin{align}
    \mat\xi(\mat n)=\nabla\widehat{\gamma}(\mat n)\bigg|_{\mat p=\mat n}
    \qquad \text{with}\qquad 
    \widehat\gamma(\mat p)=|\mat p|\gamma\left(\frac{\mat p}{|\mat p|}\right),\quad \forall \mat p\in \mathbb R^2\backslash\{\mat 0\}.
\end{align}
Here $\widehat\gamma(\cdot)$ represents a homogeneous extension of the function $\gamma(\cdot)$, extending it from unit vectors to non-zero vectors. In two dimensions, the surface energy density $\gamma(\cdot)$ can also be expressed with \(\theta\) as the independent variable, such that \(\gamma(\theta)=\gamma(\mat n)\). In this case, the Cahn-Hoffman $\mat\xi$-vector holds 
\begin{align}
    \mat\xi=\gamma(\theta)\mat n-\gamma'(\theta)\mat \tau, 
\end{align}
where $\mat \tau=(\cos\theta, \sin\theta)^{\top}$ denotes the unit tangential vector and $\mat n = -\mat{\tau}^\perp$. We further define the surface stiffness as 
\begin{align}
    \widetilde{\gamma}(\theta)
    =\left[\mat H_\gamma(\mat n)\mat\tau\right]\cdot\mat\tau
    =\gamma(\theta)+\gamma''(\theta), 
\end{align}
where $\mat H_\gamma(\mat n)$ is the Hessian matrix, defined as 
$\mat H_\gamma(\mat n)=\nabla^2\widehat{\gamma}(\mat p)\big|_{\mat p=\mat n}$. Generally speaking, anisotropy can be classified into two cases: weakly anisotropic, where \(\widetilde{\gamma} > 0\) for all orientations, and strongly anisotropic, where there exist orientations for which \(\widetilde{\gamma} < 0\). It is important to note that the positivity of the surface stiffness is crucial for the stability of the Wulff shape \cite{winklmann2006note}, the well-posedness of the surface diffusion flow \cite{Jiang16}, and the numerical analysis of such problems \cite{Deckelnick05}.

In the kinetic evolution of SSD of thin films, the primary kinetic features are surface diffusion \cite{Mullins57} and contact line migration \cite{Wang15,Zhao17}. Srolovitz and Safran first proposed a sharp-interface model to study hole growth, assuming isotropic surface energy, small slope profile, and cylindrical symmetry \cite{Srolovitz86}. 
Building on the model, Wong et al. developed a “marker particle” method to solve the two-dimensional fully-nonlinear isotropic sharp-interface model without the small slope assumption, and further studied edge retraction of a semi-infinite step film \cite{Wong00}. Dornel et al. in \cite{dornel2006surface} developed a numerical scheme to study the pinch-off phenomenon in two-dimensional island films with high aspect ratios during SSD. Jiang et al. in \cite{Jiang12} proposed a phase-field model to simulate isotropic SSD of thin films, which effectively captures the topological changes during evolution. 
To study the effect of surface energy anisotropy, various approaches have been proposed \cite{Dornel06,Pierre09b,Dufay11,Carter95,Zucker13,Wang15,Jiang16,Bao17,garcke2023diffuse}. 
Based on the thermodynamic variation and the smooth vector-field perturbation method for the total interfacial energy, and utilizing the Cahn-Hoffman \(\mat \xi\)-vector, Jiang and Zhao \cite{Jiang19a} derived a sharp-interface model for weakly anisotropic surface energies.  Additionally, they proposed a parametric finite element method (PFEM) for numerically solving the sharp-interface model. 
However, this method lacks any structure-preserving properties. Following this, many researchers have continued to explore structure-preserving algorithms for anisotropic SSD models. Li and Bao \cite{li2021energy} considered an energy-stable PFEM for surface diffusion and SSD of thin films with weakly anisotropic surface energy. However, the numerical scheme does not conserve total discrete mass or area, and its theoretical analysis is not applicable to strongly anisotropic cases. Li et al. \cite{li2023symmetrized} proposed an accurate, area-conservative, and energy-stable PFEM for solving the sharp-interface continuum model of SSD with arbitrary anisotropic surface energy. 
Recent studies on structure-preserving algorithms have focused on the following areas, including the SSD on curved substrates \cite{bao2024structure1}, regularized SSD system \cite{li2024energy}, and axisymmetric SSD on curved-surface substrates \cite{duan2025solid}.

From all the work mentioned above, it is clear that the focus is solely on SSD involving a single thin film.
In practical materials science experiments, thin films undergoing SSD are often not isolated. The interaction between multiple films on the same surface is common, and such interactions can lead to more complex morphological evolution. This idea inspired our innovative study of SSD for thin films with a double-bubble structure. 
The idea can also be traced back to  several existing references by Barrett, Garcke and N{\"u}rnberg that focus on parametric finite element approximations  of interface
evolutions with triple junctions,
which is relevant for SSD. 
These references cover a range of topics, including isotropic surface diffusion of curve networks \cite{Barrett07}, isotropic/anisotropic surface diffusion and curvature flow of curve networks with boundary contacts \cite{Barrett07b,Barrett07Ani}, anisotropic surface diffusion and mean curvature flow of surface clusters with
boundary contacts \cite{Barrett10cluster} and non-neutral boundary contacts \cite{Barrett10}. 
In addition, Bao et al. in \cite{bao2023structure} studied the structure-preserving finite element approximation of surface diffusion for curve networks and surface clusters. 

As shown in Fig. \ref{fig:det}, there are three open curves \(\Gamma_{F_1V} = \mat{X}_1\left(x(s_1), y(s_1)\right)\), 
\(\Gamma_{{F_2V}} = \mat{X}_2\left(x(s_2), y(s_2)\right)\) and \(\Gamma_{{F_1F_2}} = \mat{X}_3\left(x(s_3), y(s_3)\right)\), 
corresponding to the left, right, and middle curves, respectively. Here, \(s_j\) represents the arc length of each of the three interfaces, respectively.  
The curves \(\Gamma_{F_1V}\) and \(\Gamma_{F_2V}\) represent the film/vapor interfaces that separate the two thin films from the vapor, while \(\Gamma_{F_1F_2}\) separates the two thin films. We denote the film/substrate interfaces as \(\Gamma_{F_1S_1}\) and \(\Gamma_{F_2S_2}\), and the vapor/substrate interfaces as \(\Gamma_{VS_1}\) and \(\Gamma_{VS_2}\).
The endpoints of the three curves lie on the substrate, labeled as A, B, and C, while the other endpoints of the three curves coincide at a single point, denoted as P. 
\begin{figure}[!ht]
\centering
\includegraphics[width=0.8\textwidth]{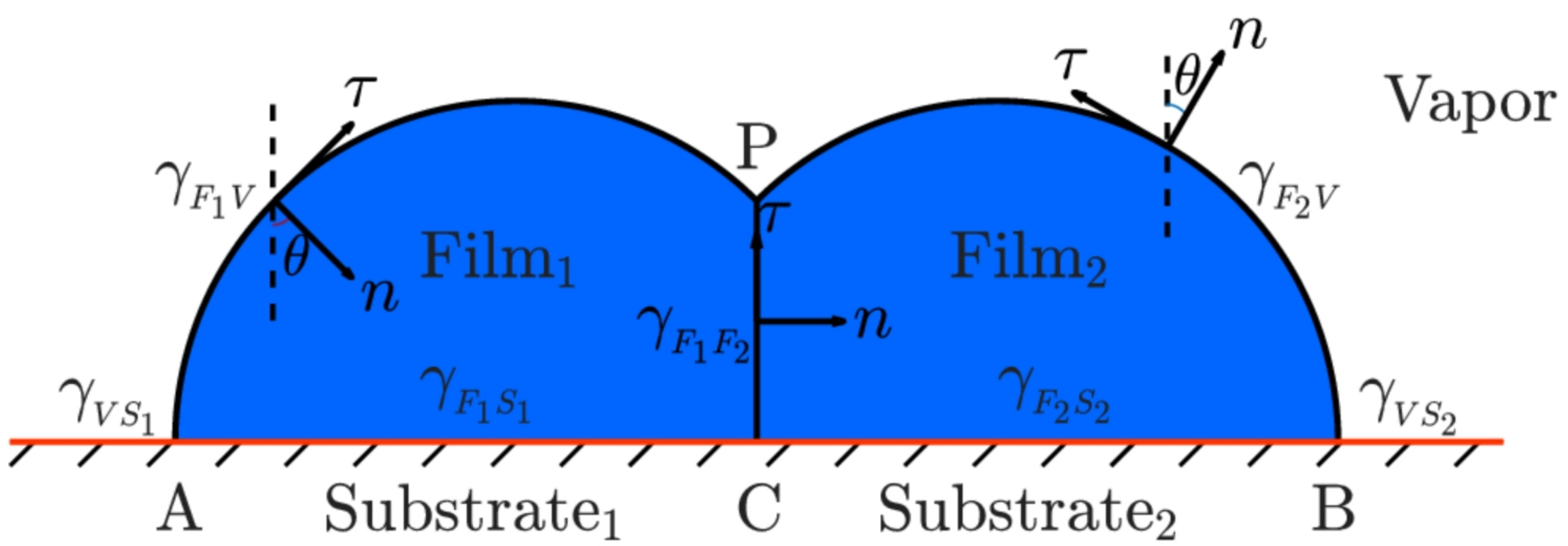}
\caption{A schematic description of the SSD. }
\label{fig:det}
\end{figure}
Then, the total interfacial energy of the double-bubble SSD system can be written as 
\begin{align}\label{eqn:ener0}
W(\Gamma)=&\int_{\Gamma_{F_1V}}\gamma_{_{F_1V}}d\Gamma_{F_1V}+\int_{\Gamma_{F_2V}}\gamma_{_{F_2V}}d\Gamma_{F_2V}
    +\int_{\Gamma_{F_1F_2}}\gamma_{_{F_1F_2}}d\Gamma_{F_1F_2}
    +\int_{\Gamma_{F_1S_1}}\gamma_{_{F_1S_1}}d\Gamma_{F_1S_1}+\int_{\Gamma_{F_2S_2}}\gamma_{_{F_2S_2}}d\Gamma_{F_2S_2}\nn\\
    &+\int_{\Gamma_{VS_1}}\gamma_{_{VS_1}}d\Gamma_{VS_1}+\int_{\Gamma_{VS_2}}\gamma_{_{VS_2}}d\Gamma_{VS_2},
\end{align}
where $\Gamma:=\left\{\Gamma_{F_1V}, \Gamma_{F_2V}, \Gamma_{F_1F_2}\right\}$, and $\gamma_{_{\mathcal U}}$ represents the surface energy density of the interface $\Gamma_{\mathcal U}$ (surface
energy per unit length). For SSD problems, we
often assume that $\gamma_{_{F_1S_1}}$, $\gamma_{_{F_2S_2}}$, $\gamma_{_{VS_1}}$ and 
$\gamma_{_{VS_2}}$ are constants. Meanwhile, 
the surface energy density functions $\gamma_{_{F_1V}}=\gamma_{_{F_1V}}(\mat n_{_{\Gamma_{_{F_1V}}}})$, $\gamma_{_{F_2V}}=\gamma_{_{F_2V}}(\mat n_{_{\Gamma_{_{F_2V}}}})$
and $\gamma_{_{F_1F_2}}=\gamma_{_{F_1F_2}}(\mat n_{_{\Gamma_{_{F_1F_2}}}})$ belong to the space $C^2(\mathbb S^1)$ and depend on the orientation of the film/vapor interface.
Under the assumptions, the total interfacial energy \eqref{eqn:ener0} can be simplified as 
\begin{align}\label{eqn:ener1x}
W(\Gamma)=&\int_{\Gamma_{F_1V}}\gamma_{_{F_1V}}(\mat n_{_{\Gamma_{_{F_1V}}}})d\Gamma_{F_1V}+\int_{\Gamma_{F_2V}}\gamma_{_{F_2V}}(\mat n_{_{\Gamma_{_{F_2V}}}})d\Gamma_{F_2V}
    +\int_{\Gamma_{F_1F_2}}\gamma_{_{F_1F_2}}(\mat n_{_{\Gamma_{_{F_1F_2}}}})d\Gamma_{F_1F_2}\nn\\
    &-\bigg(\gamma_{_{VS_1}}-\gamma_{_{F_1S_1}}\bigg)\bigg(x_C-x_A\bigg)
    -\bigg(\gamma_{_{VS_2}}-\gamma_{_{F_2S_2}}\bigg)\bigg(x_B-x_C\bigg).
\end{align}
In the above simplified energy, the first and second terms correspond to the film/vapor interfacial energy, the third term represents the film/film interfacial energy, and the last two terms account for the substrate energy.

The objectives of this paper are: (1) to define a total interfacial energy for the SSD of double-bubble thin films; (2) to derive the sharp-interface model by calculating the first variation of the energy using a smooth vector-field perturbation method; (3) to develop a structure-preserving PFEM (SP-PFEM) for the sharp-interface model with weakly/strongly anisotropic interfacial energy; (4) to check the convergence, structure-preservatity, and mesh quality of the SP-PFEM and investigate some physical insights into SSD through numerical simulations. 

The rest of the paper is organized as follows. 
In Section \ref{sec2}, based on the first variation of the total interfacial energy \eqref{eqn:ener1x}, computed using the smooth vector-field perturbation method, we derive the sharp-interface model for the SSD of double-bubble thin films.
In Section \ref{sec3}, by introducing a symmetric surface energy matrix, we reformulate the sharp-interface model into an equivalent symmetric and conservative system. Subsequently, we derive the variational formulation of the symmetric system and rigorously prove its area conservation and energy dissipation properties.
In Section \ref{sec4}, we establish a structure-preserving parametric finite element approximation for the variational formulation and demonstrate its area conservation and energy stability. Section \ref{sec5} presents extensive numerical experiments based on the proposed SP-PFEM. Finally, we summarize our findings and draw conclusions in Section \ref{sec6}.

\section{Sharp-interface model}\label{sec2}
In this section, we first introduce a smooth vector-field perturbation method to compute the first variation of the energy functional defined in \eqref{eqn:ener1x}. Subsequently, based on the first variation, we derive the sharp-interface model for the SSD of double-bubble thin films using the Cahn-Hoffman vector.
For simplicity, we introduce the notations $\Gamma_1$, $\Gamma_2$, and $\Gamma_3$ to represent $\Gamma_{F_1V}$, $\Gamma_{F_2V}$, and $\Gamma_{F_1F_2}$, respectively. The corresponding unit outer normal vectors are denoted by $\mat{n}_j$ for $j = 1, 2, 3$. Similarly, we use $\gamma_1$, $\gamma_2$ and $\gamma_3$ to denote $\gamma_{F_1V}$, $\gamma_{F_2V}$ and $\gamma_{F_1F_2}$, respectively. Then, we rewrite the total interfacial  
energy \eqref{eqn:ener1x} as 
\begin{align}\label{eqn:ener1}
W(\Gamma)=&\sum_{j=1}^3\int_{\Gamma_j}\gamma_j(\mat n_j)d\Gamma_j-\bigg(\gamma_{_{VS_1}}-\gamma_{_{F_1S_1}}\bigg)\bigg(x_C-x_A\bigg)
    -\bigg(\gamma_{_{VS_2}}-\gamma_{_{F_2S_2}}\bigg)\bigg(x_B-x_C\bigg).
\end{align}

\subsection{The first variation}

To begin, we introduce an independent parameter \(\rho_j \in \mathbb I_j = [0, 1]\) to parameterize a family of perturbed curves $$\Gamma_j^\epsilon=\mat X_j(\rho_j, \epsilon): \mathbb I_j\times [0, \epsilon_0]\rightarrow \mathbb R^2, \quad j=1,~2,~3;\qquad 
\Gamma^\epsilon=\bigg(\Gamma_1^\epsilon, \Gamma_2^\epsilon, \Gamma_3^\epsilon\bigg),$$ 
where \(\epsilon\in [0, \epsilon_0]\) controls the amplitude of the perturbation, with \(\epsilon_0\) representing the maximum perturbation amplitude. Obviously, it holds $\Gamma_j = \Gamma_j^0$, $j=1, 2, 3$. 
We further define the smooth perturbation vector field \(\mat{V}_j(\rho_j, \epsilon)\) of \(\Gamma_j\) as
\[
\mat{V}_j(\rho_j, \epsilon) = \frac{\partial \mat{X}_j(\rho_j, \epsilon)}{\partial \epsilon}, \quad j = 1, 2, 3;\qquad \mat V(\rho_j, \epsilon)=\bigg(\mat{V}_1(\rho_1, \epsilon), \mat{V}_2(\rho_2, \epsilon), \mat{V}_3(\rho_3, \epsilon)\bigg).
\]
Each point on the curve \(\Gamma_j\) is smoothly deformed according to the perturbation vector field \(\mat{V}_j\) as defined in the above equation. If the vector field \(\mat{V}_j\) is sufficiently smooth, the perturbed curves \(\Gamma_j\) will preserve the regularity of the original curve \(\Gamma_j\).
The following theorem present the first variation of the free energy functional \eqref{eqn:ener1}. 
\begin{thm}\label{thm:fv}
Under the junction point condition:
\begin{align}\label{eqn:cond1}
\sum_{j=1}^{3}\left[\gamma_j(\mat n_j)\mat\tau_j-\left(\mat{\xi}_{j}(\mat n_j)\cdot\mat\tau_j\right)\mat n_j\right]=\mat 0\qquad \text{at the junction point}~P,
\end{align}
    then the first variation of the free energy functional \eqref{eqn:ener1} for the double-bubble thin film SSD problem, 
 with respect to the smooth
deformation field $\mat V$, can be written as
\begin{align}\label{eqn:fv}
\delta W(\Gamma; \mat V) 
&= -\int_{\Gamma_1} \left (\partial_{s_1} \mat\xi_1^{\perp} \cdot \mat n_1 \right) \left( \mat V_{1, 0} \cdot \mat n_1 \right) ds_1 -\int_{\Gamma_2} \left( \partial_{s_2} \mat\xi_2^{\perp} \cdot \mat n_2 \right) \left( \mat V_{2, 0} \cdot \mat n_2 \right) ds_2 -\int_{\Gamma_3} \left(  \partial_{s_3} \mat\xi_3^{\perp} \cdot \mat n_3 \right) \left( \mat V_{3, 0} \cdot \mat n_3 \right) ds_3 \nn\\
&\quad-\Bigg[\bigg(\xi_{1, 2}-\left(\gamma_{VS_1} - \gamma_{F_1S_1}\right)\bigg)\left(\mat V_{1, 0}\cdot \mat e_1\right)\Bigg]\Bigg|_{s_1=0}
-\Bigg[\bigg(\xi_{2, 2}+\left(\gamma_{VS_2} - \gamma_{F_2S_2}\right)\bigg)\left(\mat V_{2, 0}\cdot \mat e_1\right)\Bigg]\Bigg|_{s_2=0} \nn\\
&
\quad-\Bigg[\bigg(\xi_{3, 2}+\left(\gamma_{VS_1} - \gamma_{F_1S_1}\right)-\left(\gamma_{VS_2} - \gamma_{F_2S_2}\right)\bigg)
\left(\mat V_{3, 0}\cdot \mat e_1\right)\Bigg]\Bigg|_{s_3=0},
\end{align}
where $\mat \xi_j=(\xi_{j, 1}, \xi_{j, 2})$, $\mat V_{j, 0}=\mat V_{j}(\rho_j,0)$, $j=1, 2, 3$. 
\end{thm}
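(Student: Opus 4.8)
The plan is to compute $\delta W(\Gamma;\mat V)=\frac{{\rm d}}{{\rm d}\epsilon}\big|_{\epsilon=0}W(\Gamma^\epsilon)$ term by term, using the linearity of the energy \eqref{eqn:ener1} in its three curve integrals and two substrate contributions. For each of the integrals $\int_{\Gamma_j^\epsilon}\gamma_j\,d\Gamma_j$ I would reparameterize over the fixed reference interval $\mathbb I_j$, writing it as $\int_{\mathbb I_j}\gamma_j(\theta_j)\,|\partial_{\rho_j}\mat X_j|\,d\rho_j$, and differentiate under the integral sign. The two ingredients are the variation of the length element, $\partial_\epsilon|\partial_{\rho_j}\mat X_j|=|\partial_{\rho_j}\mat X_j|\,(\mat\tau_j\cdot\partial_{s_j}\mat V_j)$, and the variation of the tangent angle, $\partial_\epsilon\theta_j=\mat n_j\cdot\partial_{s_j}\mat V_j$, both following from $\mat V_j=\partial_\epsilon\mat X_j$ together with $\mat\tau_j=\partial_{s_j}\mat X_j$ and the convention $\mat n_j=-\mat\tau_j^\perp$. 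Substituting and evaluating at $\epsilon=0$ collapses each integral to $\int_{\Gamma_j}\big(\gamma_j\mat\tau_j+\gamma_j'\mat n_j\big)\cdot\partial_{s_j}\mat V_{j,0}\,ds_j$, and I would recognize the bracketed vector as $\mat\xi_j^\perp$ via the Cahn--Hoffman relation $\mat\xi_j=\gamma_j\mat n_j-\gamma_j'\mat\tau_j$ (so that $\mat\xi_j\cdot\mat\tau_j=-\gamma_j'$ and $\gamma_j\mat\tau_j-(\mat\xi_j\cdot\mat\tau_j)\mat n_j=\mat\xi_j^\perp$).

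Next I would integrate by parts on each $\Gamma_j$, turning $\int_{\Gamma_j}\mat\xi_j^\perp\cdot\partial_{s_j}\mat V_{j,0}\,ds_j$ into the endpoint term $[\mat\xi_j^\perp\cdot\mat V_{j,0}]$ minus $\int_{\Gamma_j}(\partial_{s_j}\mat\xi_j^\perp)\cdot\mat V_{j,0}\,ds_j$. Using the Frenet relations $\partial_{s_j}\mat\tau_j=\kappa_j\mat n_j$ and $\partial_{s_j}\mat n_j=-\kappa_j\mat\tau_j$, a short computation shows $\partial_{s_j}\mat\xi_j^\perp=\widetilde\gamma_j\,\kappa_j\,\mat n_j$, which is purely normal to $\Gamma_j$. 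Hence the tangential part of $\mat V_{j,0}$ drops out of the interior integrand, which reduces to $-(\partial_{s_j}\mat\xi_j^\perp\cdot\mat n_j)(\mat V_{j,0}\cdot\mat n_j)$, producing exactly the three bulk integrals of \eqref{eqn:fv}.

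It then remains to assemble the endpoint terms. At the junction $P$ the three curves share a position and hence a common perturbation velocity $\mat V_P:=\mat V_1|_P=\mat V_2|_P=\mat V_3|_P$, so their boundary contributions add up to $\big(\sum_{j=1}^{3}\mat\xi_j^\perp\big)\big|_P\cdot\mat V_P$; since $\mat\xi_j^\perp=\gamma_j\mat\tau_j-(\mat\xi_j\cdot\mat\tau_j)\mat n_j$, this is precisely the left-hand side of \eqref{eqn:cond1} and vanishes under the hypothesis. At the substrate endpoints $A,B,C$ (the ends $s_j=0$) the contact points slide along the flat substrate, so $\mat V_{j,0}\cdot\mat e_2=0$ there; consequently $\mat\xi_j^\perp\cdot\mat V_{j,0}$ collapses to $\xi^\perp_{j,1}(\mat V_{j,0}\cdot\mat e_1)$, and the convention $\mat n_j=-\mat\tau_j^\perp$ yields the identity $\xi^\perp_{j,1}=\xi_{j,2}$, giving the contributions $-\xi_{j,2}(\mat V_{j,0}\cdot\mat e_1)$. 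Finally I would vary the two substrate terms of \eqref{eqn:ener1}, which are linear in $x_A,x_B,x_C$; from $\delta x_A=\mat V_{1,0}\cdot\mat e_1$, $\delta x_B=\mat V_{2,0}\cdot\mat e_1$, $\delta x_C=\mat V_{3,0}\cdot\mat e_1$ these contribute the constants $\pm(\gamma_{VS_1}-\gamma_{F_1S_1})$ and $\pm(\gamma_{VS_2}-\gamma_{F_2S_2})$ against the matching $\mat V_{j,0}\cdot\mat e_1$. Adding them to the $-\xi_{j,2}(\mat V_{j,0}\cdot\mat e_1)$ terms and grouping by $j$ reproduces the three bracketed contact-point expressions in \eqref{eqn:fv}.

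The single-curve first variation is routine; the delicate part is the endpoint bookkeeping. The main obstacle will be the junction $P$: one must check that all three curves contribute their boundary terms at the \emph{shared} end with a consistent orientation, so that they genuinely sum to $\sum_j\mat\xi_j^\perp$ and coincide with the vector in \eqref{eqn:cond1}. A secondary subtlety is the chain of sign conventions — the orientation linking $\mat\xi_j^\perp$ to $\mat\xi_j$ (needed for $\xi^\perp_{j,1}=\xi_{j,2}$), the Frenet signs, and the constraint $\mat V_{j,0}\cdot\mat e_2=0$ — since a single misplaced sign would corrupt both the weighted-curvature integrals and the contact-point coefficients; consistent use of $\mat n_j=-\mat\tau_j^\perp$ throughout is essential.
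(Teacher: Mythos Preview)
Your proposal is correct and follows essentially the same route as the paper's proof: differentiate each curve integral over the fixed reference interval to obtain $\int_{\Gamma_j}\mat\xi_j^\perp\cdot\partial_{s_j}\mat V_{j,0}\,ds_j$, integrate by parts, use that $\partial_{s_j}\mat\xi_j^\perp$ is purely normal to reduce the bulk terms, kill the junction contributions via the hypothesis \eqref{eqn:cond1} together with the common velocity at $P$, and combine the $s_j=0$ boundary terms (using $\mat V_{j,0}\parallel\mat e_1$ there) with the linear substrate variations. The only cosmetic difference is that the paper reaches the integrand $\mat\xi_j^\perp\cdot\partial_{s_j}\mat V_{j,0}$ by Taylor--expanding the homogeneous extension $\widehat\gamma_j$ rather than by varying the tangent angle; the two computations are equivalent.
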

\begin{proof}
Using the definition of the first variation with respect to the smooth perturbation vector-field, we have 
\begin{align}\label{eqn:first_pf1}
    &\delta W(\Gamma; \mat V)=\lim_{\epsilon \to 0} \frac{W(\Gamma^\epsilon) - W(\Gamma)}{\epsilon} \nonumber \\
    &=\lim_{\epsilon \to 0} \frac{\int_{\Gamma_{1}^{\epsilon}}\gamma_1(\mat n_1^\epsilon)ds_1^\epsilon  - \int_{\Gamma_1}\gamma_1(\mat n_1)ds_1 }{\epsilon}
+\lim_{\epsilon \to 0} \frac{\int_{\Gamma_{2}^{\epsilon}}\gamma_2(\mat n_2^\epsilon)ds_2^\epsilon  - \int_{\Gamma_2}\gamma_2(\mat n_2)ds_2 }{\epsilon}\nn\\
&~~~~+
\lim_{\epsilon \to 0} \frac{\int_{\Gamma_{3}^{\epsilon}}\gamma_3(\mat n_3^\epsilon)ds_3^\epsilon  - \int_{\Gamma_3}\gamma_3(\mat n_3)ds_3 }{\epsilon}
- \big(\gamma_{VS_1} - \gamma_{F_1S_1}\big)
\lim_{\epsilon \to 0}\frac{(x_C^\epsilon -x_C)-( x_A^\epsilon - x_A)}{\epsilon}\nn\\
    &\quad   - (\gamma_{VS_2} - \gamma_{F_2S_2})
    \lim_{\epsilon \to 0}\frac{(x_B^\epsilon - x_B) - (x_C^\epsilon  - x_C)}{\epsilon}\nn\\
    &=:\sum_{i=1}^{5}\delta W_i.
\end{align}
By using Taylor expansion, thanks to $\nabla\widehat{\gamma}_1(\mat n_1)\cdot \mat n_1=\gamma_1(\mat n_1)$, 
we obtain
\begin{align}\label{eqn:first_pf2}
\delta W_1 &=\lim_{\epsilon \to 0} \frac{\int_{\Gamma_{1}^{\epsilon}}\gamma_1(\mat n_1^\epsilon)ds_1^\epsilon  - \int_{\Gamma_1}\gamma_1(\mat n_1)ds_1 }{\epsilon}= \int_0^1 \left[\lim_{\epsilon \to 0} \frac{\gamma_1(\mat n_1^\epsilon) \left|\partial_{\rho_1} \mat X_1(\rho_1, \epsilon)\right| - \gamma_1(\mat n_1) \left|\partial_{\rho_1} \mat X_1(\rho_1, 0)\right|}{\epsilon}\right] d\rho_1\nn\\
&=\int_0^1 \lim_{\epsilon \to 0} \frac{1}{\epsilon} \Bigg\{\left[\gamma_1(\mat n_1)+
\nabla \widehat{\gamma}_1(\mat n_1) \cdot \left( \frac{\partial_{\rho_1} \mat X_1 \cdot \partial_{\rho_1} \mat V_{1, 0}}{\left|\partial_{\rho_1} \mat X_1\right|^3} \partial_{\rho_1} \mat X_1^\perp - \frac{\partial_{\rho_1} \mat V_{1, 0}^\perp}{\left|\partial_{\rho_1} \mat X_1\right|} \right)\epsilon\right]\left[\left|\partial_{\rho_1} \mat X_1(\rho_1, 0)\right|+\frac{\partial_{\rho_1} \mat X_1 \cdot \partial_{\rho_1} \mat V_{1, 0}}{\left|\partial_{\rho_1} \mat X_1\right|}\epsilon\right]\nn\\
&~~~~- \gamma_1(\mat n_1) \left|\partial_{\rho_1} \mat X_1(\rho_1, 0)\right|\Bigg\}d\rho_1\nn \\
&=\int_{0}^{1}\left[\gamma_1(\mat n_1)\frac{\partial_{\rho_1} \mat X_1 \cdot \partial_{\rho_1} \mat V_{1, 0}}{\left|\partial_{\rho_1} \mat X_1\right|}+\nabla \widehat{\gamma}_1(\mat n_1) \cdot \left( \frac{\partial_{\rho_1} \mat X_1 \cdot \partial_{\rho_1} \mat V_{1, 0}}{\left|\partial_{\rho_1} \mat X_1\right|^3} \partial_{\rho_1} \mat X_1^\perp - \frac{\partial_{\rho_1} \mat V_{1, 0}^\perp}{\left|\partial_{\rho_1} \mat X_1\right|} \right)\left|\partial_{\rho_1}\mat X_1(\rho_1,0)\right|   \right]d\rho_1\nn\\
&= - \int_0^1 \nabla \widehat{\gamma}_1(\mat n_1) \cdot \partial_{\rho_1} \mat V_{1, 0}^\perp \, d\rho_1 = \int_0^1 \nabla \widehat{\gamma}_1(\mat n_1)^\perp \cdot \partial_{\rho_1} \mat V_{1, 0} \, d\rho_1.
\end{align}
Taking $\nabla \widehat{\gamma}_1(\mat{n}_1) = \mat{\xi}_1$ in the above equation, by virtue of integration by parts, and due to the fact that $\partial_{s_1} \mat{\xi}_1 \parallel \mat{\tau}_1$ and $\partial_{s_1} \mat{\xi}_1^\perp \parallel \mat{n}_1$, we get
    \begin{align}\label{eqn:first_pf3}
\delta W_1 &= -\int_0^1 \partial_{\rho_1} \mat\xi_1^\perp \cdot \mat V_{1, 0} \, d\rho_1 + \left[\mat\xi_1^\perp \cdot \mat V_{1, 0}\right]\bigg|_{\rho_1=0}^{\rho_1=1} \nn\\
&= -\int_{\Gamma_1} \partial_{s_1} \mat\xi_1^\perp \cdot \mat V_{1, 0} \, ds_1 + \left[\mat\xi_1^\perp \cdot \mat V_{1, 0}\right]\bigg|_{s_1=0}^{s_1=L_1} \nn\\
&= -\int_{\Gamma_1} \left(\partial_{s_1} \mat\xi_1 ^{\perp} \cdot \mat n_1 \right)\left(\mat V_{1, 0} \cdot \mat n_1 \right) ds_1 + \left[ \mat\xi_1^{\perp} \cdot \mat V_{1, 0} \right]\bigg|_{s_1=0}^{s_1=L_1}. 
\end{align}
Similarly, we also have
\begin{align}
&\label{eqn:first_pf4} \delta W_2=-\int_{\Gamma_2} \left(\partial_{s_2} \mat\xi_2 ^{\perp} \cdot \mat n_2 \right)\left(\mat V_{2, 0} \cdot \mat n_2 \right) ds_2 + \left[ \mat\xi_2^{\perp} \cdot \mat V_{2, 0} \right]\bigg|_{s_2=0}^{s_2=L_2},\\
& \label{eqn:first_pf5} \delta W_3=-\int_{\Gamma_3} \left(\partial_{s_3} \mat\xi_3 ^{\perp} \cdot \mat n_3 \right)\left(\mat V_{3, 0} \cdot \mat n_3 \right) ds_3 + \left[ \mat\xi_3^{\perp} \cdot \mat V_{3, 0} \right]\bigg|_{s_3=0}^{s_3=L_3}.
\end{align}
In addition, we have 
\begin{align}\label{eqn:first_pf6}
\delta W_4 &=\left(\gamma_{VS_1} - \gamma_{F_1S_1}\right)\lim_{\epsilon\rightarrow 0}\frac{\left( x_A^\epsilon - x_A\right)-\left(x_C^\epsilon -x_C\right)}{\epsilon}=\left(\gamma_{VS_1} - \gamma_{F_1S_1}\right)
\bigg[\left(\mat V_{1, 0}\cdot \mat e_1\right)\bigg|_{s_1=0}-\left(\mathbf{V}_{3, 0}\cdot \mat e_1\right)\bigg|_{s_3=0}\bigg],
\end{align}
and 
\begin{align}\label{eqn:first_pf7}
\delta W_5 &=\left(\gamma_{VS_2} - \gamma_{F_2S_2}\right)\lim_{\epsilon\rightarrow 0}\frac{\left( x_C^\epsilon - x_C\right)-\left(x_B^\epsilon -x_B\right)}{\epsilon}=\left(\gamma_{VS_2} - \gamma_{F_2S_2}\right)
\bigg[\left(\mat V_{3, 0}\cdot \mat e_1\right)\bigg|_{s_3=0}-\left(\mathbf{V}_{2, 0}\cdot \mat e_1\right)\bigg|_{s_2=0}\bigg].
\end{align}
Thanks to the boundary condition \eqref{eqn:cond1} at the junction point, we have 
\begin{align}\label{eqn:first_pf8}
\sum_{j=1}^{3}\mat\xi_j^{\perp}\bigg|_{s_j=L_j}=\mat 0.
\end{align}
Since at the junction point $P$, we have
\begin{align*}
    \mat X_1(\rho_1, \epsilon)\bigg|_{\rho_1=1}=\mat X_2(\rho_2, \epsilon)\bigg|_{\rho_2=1}=\mat X_3(\rho_3, \epsilon)\bigg|_{\rho_3=1},
\end{align*}
which means that 
\begin{align}\label{eqn:first_pf8_add1}
    \mat V_{1, 0}\bigg|_{s_1=L_1}=\mat V_{2, 0}\bigg|_{s_2=L_2}=\mat V_{3, 0}\bigg|_{s_3=L_3}.
\end{align}
From \eqref{eqn:first_pf8} and \eqref{eqn:first_pf8_add1}, we have 
\begin{align}\label{eqn:first_pf9}
\left(\mat\xi_1^{\perp}\cdot\mat V_{1, 0}\right)\bigg|_{s_1=L_1}+\left(\mat\xi_2^{\perp}\cdot\mat V_{2, 0}\right)\bigg|_{s_2=L_2}
+\left(\mat\xi_3^{\perp}\cdot\mat V_{3, 0}\right)\bigg|_{s_3=L_3}
=0. 
\end{align}
Summing \eqref{eqn:first_pf3}-\eqref{eqn:first_pf7}, 
and by virtue of \eqref{eqn:first_pf9}, 
we obtain 
\begin{align}\label{eqn:first_pf10}
   \delta W(\Gamma; \mat V) &= -\int_{\Gamma_1} \left(\partial_{s_1} \mat\xi_1 ^{\perp} \cdot \mat n_1 \right)\left(\mat V_{1, 0} \cdot \mat n_1 \right) ds_1-\int_{\Gamma_2} \left(\partial_{s_2} \mat\xi_2 ^{\perp} \cdot \mat n_2 \right)\left(\mat V_{2, 0} \cdot \mat n_2 \right) ds_2-\int_{\Gamma_3} \left(\partial_{s_3} \mat\xi_3 ^{\perp} \cdot \mat n_3 \right)\left(\mat V_{3, 0} \cdot \mat n_3 \right) ds_3\nn\\
   &~~~~+\left[ \mat\xi_1^{\perp} \cdot \mat V_{1, 0} \right]\bigg|_{s_1=0}^{s_1=L_1}
   +\left[ \mat\xi_2^{\perp} \cdot \mat V_{2, 0} \right]\bigg|_{s_2=0}^{s_2=L_2} + \left[ \mat\xi_3^{\perp} \cdot \mat V_{3, 0} \right]\bigg|_{s_3=0}^{s_3=L_3}+\left(\gamma_{VS_1} - \gamma_{F_1S_1}\right)
\bigg[\left(\mat V_{1, 0}\cdot \mat e_1\right)\bigg|_{s_1=0}-\left(\mathbf{V}_{3, 0}\cdot \mat e_1\right)\bigg|_{s_3=0}\bigg]\nn\\
&~~~~+\left(\gamma_{VS_2} - \gamma_{F_2S_2}\right)
\bigg[\left(\mat V_{3, 0}\cdot \mat e_1\right)\bigg|_{s_3=0}-\left(\mathbf{V}_{2, 0}\cdot \mat e_1\right)\bigg|_{s_2=0}\bigg]\nn\\
&= -\int_{\Gamma_1} \left(\partial_{s_1} \mat\xi_1 ^{\perp} \cdot \mat n_1 \right)\left(\mat V_{1, 0} \cdot \mat n_1 \right) ds_1-\int_{\Gamma_2} \left(\partial_{s_2} \mat\xi_2 ^{\perp} \cdot \mat n_2 \right)\left(\mat V_{2, 0} \cdot \mat n_2 \right) ds_2-\int_{\Gamma_3} \left(\partial_{s_3} \mat\xi_3 ^{\perp} \cdot \mat n_3 \right)\left(\mat V_{3, 0} \cdot \mat n_3 \right) ds_3\nn\\
   &~~~~-\left[ \mat\xi_1^{\perp} \cdot \mat V_{1, 0} \right]\bigg|_{s_1=0}
   -\left[ \mat\xi_2^{\perp} \cdot \mat V_{2, 0} \right]\bigg|_{s_2=0} - \left[ \mat\xi_3^{\perp} \cdot \mat V_{3, 0} \right]\bigg|_{s_3=0}+\left(\gamma_{VS_1} - \gamma_{F_1S_1}\right)
\bigg[\left(\mat V_{1, 0}\cdot \mat e_1\right)\bigg|_{s_1=0}-\left(\mathbf{V}_{3, 0}\cdot \mat e_1\right)\bigg|_{s_3=0}\bigg]\nn\\
&~~~~+\left(\gamma_{VS_2} - \gamma_{F_2S_2}\right)
\bigg[\left(\mat V_{3, 0}\cdot \mat e_1\right)\bigg|_{s_3=0}-\left(\mathbf{V}_{2, 0}\cdot \mat e_1\right)\bigg|_{s_2=0}\bigg].
\end{align}
Since   
$\mat V_{j, 0}|_{s_j=0}\parallel \mat e_1$, 
we have 
\begin{align}\label{eqn:first_pf11}
    \left[ \mat\xi_j^{\perp} \cdot \mat V_{j, 0} \right]\bigg|_{s_j=0}
    =\bigg[ \left(\mat\xi_j^{\perp} \cdot \mat e_1\right)\left(\mat V_{j, 0}\cdot\mat e_1\right) \bigg]\bigg|_{s_j=0},\qquad j = 1, 2, 3. 
\end{align}
Using \eqref{eqn:first_pf11} in \eqref{eqn:first_pf10} obtains that 
\begin{align*}
\delta W(\Gamma; \mat V) 
&= -\int_{\Gamma_1} \left (\partial_{s_1} \mat\xi_1^{\perp} \cdot \mat n_1 \right) \left( \mat V_{1, 0} \cdot \mat n_1 \right) ds_1 -\int_{\Gamma_2} \left( \partial_{s_2} \mat\xi_2^{\perp} \cdot \mat n_2 \right) \left( \mat V_{2, 0} \cdot \mat n_2 \right) ds_2 -\int_{\Gamma_3} \left(  \partial_{s_3} \mat\xi_3^{\perp} \cdot \mat n_3 \right) \left( \mat V_{3, 0} \cdot \mat n_3 \right) ds_3 \nn\\
&\quad-\Bigg[\bigg(\xi_{1, 2}-\left(\gamma_{VS_1} - \gamma_{F_1S_1}\right)\bigg)\left(\mat V_{1, 0}\cdot \mat e_1\right)\Bigg]\Bigg|_{s_1=0}
-\Bigg[\bigg(\xi_{2, 2}+\left(\gamma_{VS_2} - \gamma_{F_2S_2}\right)\bigg)\left(\mat V_{2, 0}\cdot \mat e_1\right)\Bigg]\Bigg|_{s_2=0} \nn\\
&
\quad-\Bigg[\bigg(\xi_{3, 2}+\left(\gamma_{VS_1} - \gamma_{F_1S_1}\right)-\left(\gamma_{VS_2} - \gamma_{F_2S_2}\right)\bigg)
\left(\mat V_{3, 0}\cdot \mat e_1\right)\Bigg]\Bigg|_{s_3=0}.
\end{align*}
Therefore, we have completed the proof. 
\end{proof}

\subsection{Sharp-interface model via Cahn-Hoffman $\mat \xi_j$-vector formulation}
By applying Theorem \ref{thm:fv}, the first variation of the total energy functional \eqref{eqn:ener1} with respect to the interfaces $\Gamma_j$ ($j=1, 2, 3$) can be expressed as:  
\begin{align}\label{eqn:fv_inter}  
\frac{\delta W}{\delta \Gamma_j} = -\partial_{s_j} \mat\xi_j^{\perp} \cdot \mat{n}_j, \qquad j=1, 2, 3.  
\end{align}  
Furthermore, the first variation of the total energy functional \eqref{eqn:ener1} with respect to the three contact points $A$, $B$, and $C$ is given by:  
\begin{subequations}
\label{eqn:fv_con}
\begin{align}
& \frac{\delta W}{\delta x_A}
=-\xi_{1, 2}\bigg|_{s_1=0}+\left(\gamma_{VS_1} - \gamma_{F_1S_1}\right),\label{eqn:fv_con_a}\\
& \frac{\delta W}{\delta x_B}
=-\xi_{2, 2}\bigg|_{s_2=0}-\left(\gamma_{VS_2} - \gamma_{F_2S_2}\right), \label{eqn:fv_con_b}\\
& \frac{\delta W}{\delta x_C}
=-\xi_{3, 2}\bigg|_{s_3=0}-\left(\gamma_{VS_1} - \gamma_{F_1S_1}\right)+\left(\gamma_{VS_2} - \gamma_{F_2S_2}\right).\label{eqn:fv_con_c}
\end{align}
\end{subequations}

According to the Gibbs–Thomson relation 
\cite{Mullins57,Sutton95}, the chemical potentials of the system are defined as  
\begin{align}\label{eqn:cp}  
\mu_j = \Omega_{0, j} \frac{\delta W}{\delta \Gamma_j} = -\Omega_{0, j} \partial_{s_j} \mat{\xi}_j^{\perp} \cdot \mathbf{n}_j, \qquad j=1, 2, 3,  
\end{align}  
where \(\Omega_{0, j}\) denotes the atomic volume of the \(j\)-th thin film material.
The normal velocity of the \( j \)-th interface curve \( \Gamma_j \), denoted as \( \mat N_j \), is governed by the following surface diffusion flow \cite{Cahn74,Mullins57}:  
\begin{align}\label{eqn:JN}
\mat J_j = -\frac{D_{s_j} \nu_j}{k_{B, j} T_{e, j}} \nabla_{s_j} \mu_j, \qquad  
\mat N_j = -\Omega_{0, j} \left[ \nabla_{s_j} \cdot \mat J_j \right] = \frac{\Omega_{0, j} D_{s_j} \nu_j}{k_{B, j} T_{e, j}} \partial_{s_js_j} \mu_j,  
\end{align}  
where \( \mat J_j \) represents the surface flux, \( D_{s_j} \) is the surface diffusion coefficient, \( \nu_j \) is the atomic density, \( k_{B, j} \) is the Boltzmann constant, and \( T_{e, j} \) is the temperature. Here, \( k_{B, j} T_{e, j} \) represents the thermal energy of the \( j \)-th thin film in the system.
Furthermore, the motion of the three contact points is governed by the energy gradient flow, which is described by the time-dependent Ginzburg–Landau kinetic equations \cite{Wang15}, i.e.,
\begin{align}
    \frac{dx_A(t)}{dt}=-\eta_1\frac{\delta W}{\delta x_A},\quad \frac{dx_B(t)}{dt}=-\eta_2\frac{\delta W}{\delta x_B},\quad \frac{dx_C(t)}{dt}=-\eta_3\frac{\delta W}{\delta x_C}. 
\end{align}
Here, the parameters $\eta_j\in (0, +\infty)$, $j=1, 2, 3$ represent the finite mobility of the contact points. For a detailed physical interpretation of this approach, readers can refer to \cite{Wang15}. 

By selecting the characteristic length scale and characteristic surface energy scale of the \( j \)-th thin film as \( h_0 \) and \( \gamma_0 \), respectively, the time scale is defined as \( \frac{h_{0, j}^4}{Q_j \gamma_{0, j}} \), where \( Q_j = \frac{\Omega_{0, j}^2 D_{s_j} \nu_j}{k_{B, j} T_{e, j}} \). Additionally, the contact point mobility is scaled by \( \frac{Q_j}{h_{0, j}^3} \). 
Based on these scalings, we derive the following dimensionless sharp-interface model \cite{Wang15,Bao17,Jiang19a}:
\begin{subequations}\label{eqn:si}
\begin{align}
&\partial_{t} \mat{X}_j = \partial_{s_js_j} \mu_j \mat{n}_j, \quad 0 < s_j < L_j(t), \quad t > 0,\quad j = 1, 2, 3 \label{eqn:si1}\\
&\mu_j = -\partial_{s_j} \mat{\xi}_j^{\perp} \cdot \mat{n}_j, \quad \mat{\xi}_j = \nabla \widehat{\gamma}_j(\mat{p})|_{\mat{p}=\mat{n}_j}.\label{eqn:si2}
\end{align}
\end{subequations}
Here, \(\Gamma_j:=\Gamma_j(t)=\mat{X}_j(s_j, t)=(x_j(s_j, t), y_j(s_j, t))\) denotes the position of the \(j\)-th moving film/vapor interface, where \(s_j\) represents the arc length along the \(j\)-th interface, and \(\mat{n}_j\) is the outward unit normal vector to the \(j\)-th interface. The chemical potential is given by \(\mu_j(s, t)\), and \(\mat{\xi}_j\) represents the dimensionless Cahn-Hoffman vector. Additionally, \(L_j:=L_j(t)\) is the total length of the \(j\)-th interface. The initial conditions are given as 
\begin{align}\label{eqn:ini}
    \mat X_{j, 0}(s_j)
    =\left(x_{j, 0}(s_j), y_{j, 0}(s_j)\right),
    \qquad 
    0\leq s_j\leq L_{j, 0},\qquad j = 1, 2, 3, 
\end{align}
where $\mat X_{j, 0}(s_j)=\mat X_{j}(s_j, 0)$, $x_{j, 0}(s_j)=x_{j}(s_j, 0)$, $y_{j, 0}(s_j)=y_{j}(s_j, 0)$ and $L_{j, 0}=L_j(0)$. 
The initial conditions satisfy $0=y_{j, 0}(0)<y_{j, 0}(L_{j, 0})$. The boundary conditions are as follows:
\begin{itemize}
     \item [\textbf{(I)}] \textbf{Contact point condition}
 \begin{align}
\label{eqn:bd1}
 y_j(0, t)=0\qquad j=1, 2, 3, \qquad t \geq 0. 
 \end{align}
 \item [\textbf{(II)}] \textbf{Relaxed contact angle condition}
 \begin{align}
\label{eqn:bd2}
 \frac{dx_{A}}{dt}=\eta_1\bigg(\xi_{1, 2}\big|_{s_1=0}-\sigma_1\bigg), \quad 
 \frac{dx_{B}}{dt}=\eta_2\bigg(\xi_{2, 2}\big|_{s_2=0}+\sigma_2\bigg),\quad
 \frac{dx_{C}}{dt}= \eta_3\bigg(\xi_{3, 2}\big|_{s_3=0}+\sigma_1-\sigma_2\bigg),  \quad t \geq 0,
 \end{align}
 where $\sigma_1 = \frac{\gamma_{VS_1} - \gamma_{F_1S_1}}{\gamma_{0}}$ and $\sigma_2 = \frac{\gamma_{VS_2} - \gamma_{F_2S_2}}{\gamma_{0}}$ are dimensionless material constants, with  $\gamma_{0}$  representing the dimensionless unit of surface energy density.
 \item [\textbf{(III)}] \textbf{Zero-mass flux condition}
\begin{align}
\label{eqn:bd3}
\partial_{s_j}\mu_j(0, t)=0,
\quad j=1, 2, 3,\quad t\geq 0;\qquad 
\partial_{s_1}\mu_1(L_1, t)=\partial_{s_2}\mu_2(L_2, t)=\partial_{s_3}\mu_3(L_3, t),
\quad t\geq 0. 
\end{align}
\item [\textbf{(IV)}] \textbf{Junction point condition}
\begin{subequations}\label{eqn:bd4}
    \begin{align}
& \mat X_1(L_1, t)=\mat X_2(L_2, t)=\mat X_3(L_3, t),\qquad t\geq 0; \label{eqn:bd4_a}\\
&\sum_{j=1}^{3}\left[\gamma_j(\mat n_j)\mat\tau_j-\left(\nabla\gamma_{j}(\mat n_j)\cdot\mat\tau_j\right)\mat n_j\right]\bigg|_{s_j=L_j}=\mat 0,\qquad t\geq 0; \label{eqn:bd4_b}\\
&\sum_{j=1}^{3}\mu_j(L_j, t)=0,\qquad t\geq 0. \label{eqn:bd4_c}
\end{align}
\end{subequations}

\end{itemize}

\begin{rem}\label{rem1}
The boundary conditions described above serve the following purposes:  
\begin{itemize}
    \item [a)] Condition (I) ensures that the contact points are constrained to move along the substrate.
    \item [b)] Condition (II) facilitates the relaxation of the contact angle. 
    \item [c)] Condition (III) guarantees the conservation of the total area/mass, implying no mass flux at the contact points. 
    \item [d)] Condition (IV) specifies the requirements for the junction point \( P \). Among these conditions, \eqref{eqn:bd4_a} is naturally satisfied. As for \eqref{eqn:bd4_b}, it has already been utilized in the derivation of the first variation and is referred to as the force balance condition. Meanwhile, \eqref{eqn:bd4_c} can be interpreted as the chemical potential continuity condition.
\end{itemize}
\end{rem}

\subsection{Geometric properties}
In this subsection, we demonstrate that the sharp-interface model \eqref{eqn:si} satisfies the area conservation and energy dissipation laws. 
\begin{pro}\label{pro:laws} (Area Conservation $\&$ Energy Dissipation) 
Assume that \(\Gamma_j(t) = \mathbf{X}_j(s_j, t)\) for \(j = 1, 2, 3\) represents the solution to the sharp-interface model \eqref{eqn:si}, subject to the initial condition \eqref{eqn:ini} and the boundary conditions \eqref{eqn:bd1}–\eqref{eqn:bd4}. Then, 
the total area of the double-bubble thin film remains conserved throughout the evolution, i.e.,
\begin{align}\label{eqn:area_conser}
A(t) \equiv A(0),
\end{align}
where the total area $A(t)$ is defined by 
\begin{align}\label{eqn:area}
    A(t) = \int_{\Gamma_2(t)}y_2(s_2, t)\,\partial_{s_2}x_{2}(s_2, t)ds_2
    -\int_{\Gamma_1(t)}y_1(s_1, t)\,\partial_{s_1}x_{1}(s_1, t)ds_1,\qquad t\geq 0.
\end{align}
Additionally, the total free energy is dissipative in the sense that 
\begin{align}\label{eqn:ener_decay}
    E(t_1)\leq E(t_2)\leq E(0),\qquad 
    t_1\geq t_2\geq 0,
\end{align}
where the total free energy $E(t)$ is defined by 
\begin{align}\label{eqn:ener2}
    E(t) =\sum_{j=1}^3\int_{\Gamma_j(t)}\gamma_j(\mat n_j)d\Gamma_j-\sigma_1\bigg(x_C(t)-x_A(t)\bigg)
    -\sigma_2\bigg(x_B(t)-x_C(t)\bigg),\qquad t\geq 0.
\end{align}
\end{pro}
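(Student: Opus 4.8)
\section*{Proof proposal}

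The plan is to differentiate $A(t)$ and $E(t)$ in time, reduce every interior term to the governing equations \eqref{eqn:si1}--\eqref{eqn:si2} by integration by parts, and then eliminate the remaining boundary contributions at the contact points $A,B,C$ and at the junction $P$ using the boundary conditions \eqref{eqn:bd1}--\eqref{eqn:bd4}. The common tool is the kinematic identity, valid for a curve $\Gamma_j(t)=\mat X_j(\rho_j,t)$ with velocity $\partial_t\mat X_j$,
\[
\frac{d}{dt}\int_{\Gamma_j} y_j\,\partial_{s_j}x_j\,ds_j = -\int_{\Gamma_j}\left(\partial_t\mat X_j\cdot\mat n_j\right)ds_j + \left[\,y_j\,\partial_t x_j\,\right]_{s_j=0}^{s_j=L_j},
\]
which I would obtain by passing to a fixed parameter $\rho_j\in[0,1]$, differentiating under the integral, integrating by parts in $\rho_j$, and rewriting the Jacobian $\partial_{\rho_j}x_j\,\partial_t y_j-\partial_{\rho_j}y_j\,\partial_t x_j$ as $-|\partial_{\rho_j}\mat X_j|\,(\partial_t\mat X_j\cdot\mat n_j)$ via $\mat n_j=-\mat\tau_j^\perp$.

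For the area, I apply this identity to each term of \eqref{eqn:area} and substitute $\partial_t\mat X_j\cdot\mat n_j=\partial_{s_js_j}\mu_j$ from \eqref{eqn:si1}, so that one further integration gives $\frac{d}{dt}\int_{\Gamma_j} y_j\,\partial_{s_j}x_j\,ds_j = -\partial_{s_j}\mu_j(L_j,t)+\partial_{s_j}\mu_j(0,t)+\left[y_j\,\partial_t x_j\right]_{s_j=0}^{s_j=L_j}$. The contact-point boundary term drops because $y_j(0,t)=0$ by \eqref{eqn:bd1}, the flux at $s_j=0$ vanishes by the zero-flux condition in \eqref{eqn:bd3}, and at $P$ both the height $y_j(L_j,t)=y_P$ and the velocity $\partial_t x_j(L_j,t)$ are shared by all three curves by \eqref{eqn:bd4_a}. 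Since $A(t)=\int_{\Gamma_2}-\int_{\Gamma_1}$, the two identical junction terms $y_P\,\partial_t x_P$ cancel and I am left with $\frac{dA}{dt}=\partial_{s_1}\mu_1(L_1,t)-\partial_{s_2}\mu_2(L_2,t)$, which is $0$ by the flux-matching condition $\partial_{s_1}\mu_1(L_1)=\partial_{s_2}\mu_2(L_2)$ of \eqref{eqn:bd3}, yielding \eqref{eqn:area_conser}.

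For the energy, I note that $E(t)$ in \eqref{eqn:ener2} is precisely the dimensionless form of the functional $W$ in \eqref{eqn:ener1}, so $\frac{dE}{dt}$ equals the first variation $\delta W(\Gamma;\mat V)$ of Theorem \ref{thm:fv} evaluated along the physical velocity $\mat V_j=\partial_t\mat X_j$; the junction hypothesis \eqref{eqn:cond1} of that theorem holds here by \eqref{eqn:bd4_b}. In the three interface integrals of \eqref{eqn:fv} I substitute $\partial_{s_j}\mat\xi_j^\perp\cdot\mat n_j=-\mu_j$ and $\partial_t\mat X_j\cdot\mat n_j=\partial_{s_js_j}\mu_j$ from \eqref{eqn:si}, turning each into $\int_{\Gamma_j}\mu_j\,\partial_{s_js_j}\mu_j\,ds_j$; one integration by parts gives $-\int_{\Gamma_j}(\partial_{s_j}\mu_j)^2\,ds_j\le 0$ together with a boundary term $[\mu_j\,\partial_{s_j}\mu_j]_{s_j=0}^{s_j=L_j}$. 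The parts at $s_j=0$ vanish by \eqref{eqn:bd3}, while the junction parts sum to $\left(\partial_{s_1}\mu_1(L_1)\right)\sum_{j=1}^{3}\mu_j(L_j)=0$ after factoring out the common flux from \eqref{eqn:bd3} and using the continuity $\sum_{j}\mu_j(L_j)=0$ of \eqref{eqn:bd4_c}. Finally, the three contact-point terms of \eqref{eqn:fv}, rewritten via the relaxed contact-angle laws \eqref{eqn:bd2}, become $-\eta_1(\xi_{1,2}-\sigma_1)^2$, $-\eta_2(\xi_{2,2}+\sigma_2)^2$ and $-\eta_3(\xi_{3,2}+\sigma_1-\sigma_2)^2$, each nonpositive. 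Summing shows $\frac{dE}{dt}\le 0$, which integrates to \eqref{eqn:ener_decay}.

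The main obstacle is the bookkeeping at the junction $P$: for each curve individually the surviving junction term is nonzero, and cancellation occurs only when the three interfaces are combined. For the area this requires the shared junction velocity \eqref{eqn:bd4_a} together with flux matching \eqref{eqn:bd3}; for the energy it genuinely couples the flux-matching condition \eqref{eqn:bd3} (to pull out the common factor $\partial_{s_j}\mu_j(L_j)$) with the chemical-potential continuity \eqref{eqn:bd4_c} (to annihilate $\sum_j\mu_j(L_j)$). Confirming that exactly these two junction conditions do the job, and that \eqref{eqn:bd2} renders the contact-point contributions sign-definite, is the only delicate point; the rest is routine differentiation and integration by parts.
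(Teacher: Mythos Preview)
Your proposal is correct and follows essentially the same route as the paper: for area conservation you differentiate under the integral in a fixed parametrisation, integrate by parts, cancel the junction boundary terms via \eqref{eqn:bd1} and \eqref{eqn:bd4_a}, insert \eqref{eqn:si1}, and kill the remaining flux terms with \eqref{eqn:bd3}; for energy dissipation you invoke the first variation of Theorem~\ref{thm:fv} with $\mat V_j=\partial_t\mat X_j$, substitute \eqref{eqn:si}, integrate by parts, and use \eqref{eqn:bd3}$+$\eqref{eqn:bd4_c} at the junction and \eqref{eqn:bd2} at the contact points to obtain the sum of nonpositive squares. The only cosmetic discrepancy is a sign in your kinematic identity relative to the paper's normal convention, which does not affect the cancellation.
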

\begin{proof}
We use the Reynolds transport theorem to calculate the time derivative of the total area $A(t)$. Indeed, it is obvious that 
\begin{align}\label{eqn:laws_pf1}
   \frac{d}{dt}A(t)  & =\frac{d}{dt}\int_{\Gamma_2}y_2\,\partial_{s_2}x_{2}ds_2
    -\frac{d}{dt}\int_{\Gamma_1}y_1\,\partial_{s_1}x_{1}ds_1\nn\\
    &=\frac{d}{dt}\int_0^1y_2\partial_{\rho_2}x_2d\rho_2-\frac{d}{dt}\int_0^1y_1\partial_{\rho_1}x_2d\rho_1\nn\\
    &=\int_0^1\bigg[\partial_ty_2\partial_{\rho_2}x_2+y_2\partial_{\rho_2}\partial_tx_2\bigg]d\rho_2-\int_0^1\bigg[\partial_ty_1\partial_{\rho_1}x_1+y_1\partial_{\rho_1}\partial_tx_1\bigg]d\rho_1. 
\end{align}
By using integration by parts in \eqref{eqn:laws_pf1}, and thanks to the 
contact point condition \eqref{eqn:bd1} 
and the junction point condition \eqref{eqn:bd4_a}, we have 
\begin{align}\label{eqn:laws_pf2}
    \frac{d}{dt}A(t)  & = 
\int_0^1\bigg[\partial_ty_2\partial_{\rho_2}x_2-\partial_{\rho_2}y_2\partial_tx_2\bigg]d\rho_2
-\int_0^1\bigg[\partial_ty_1\partial_{\rho_1}x_1-\partial_{\rho_1}y_1\partial_tx_1\bigg]d\rho_1
+\left[y_2\partial_tx_2\bigg|_{\rho_2=0}^{\rho_2=1}-y_1\partial_tx_1\bigg|_{\rho_1=0}^{\rho_1=1}\right]\nn\\
& = 
\int_0^1\bigg[\partial_ty_2\partial_{\rho_2}x_2-\partial_{\rho_2}y_2\partial_tx_2\bigg]d\rho_2
-\int_0^1\bigg[\partial_ty_1\partial_{\rho_1}x_1-\partial_{\rho_1}y_1\partial_tx_1\bigg]d\rho_1\nn\\
&=\int_{\Gamma_2}\partial_t\mat X_2\cdot \mat n_2ds_2-\int_{\Gamma_1}\partial_t\mat X_1\cdot \mat n_1ds_1.
\end{align}
Then, from \eqref{eqn:si1} and \eqref{eqn:laws_pf2}, by using integration by parts, and thanks to the zero-mass flux condition \eqref{eqn:bd3}, we obtain 
\begin{align}\label{eqn:laws_pf3}
    \frac{d}{dt}A(t)&=\int_{\Gamma_2}\partial_{s_2s_2}\mu_2ds_2-\int_{\Gamma_1}\partial_{s_1s_1}\mu_1ds_1=\partial_{s_2}\mu_2\bigg|_{s_2=0}^{s_2=L_2}-\partial_{s_1}\mu_1\bigg|_{s_1=0}^{s_1=L_1}=0.
    \end{align}

For the derivative of the totoal free energy with respect to the time variable $t$, we can replace the perturbation variable $\epsilon$ in \eqref{eqn:fv} by the variable $t$. By this way, we have 
\begin{align}\label{eqn:laws_pf4}
    \frac{d}{dt}E(t)&=-\sum_{j=1}^3\int_{\Gamma_j} \left (\partial_{s_j} \mat\xi_j^{\perp} \cdot \mat n_j \right) \left( \partial_t\mat X_j \cdot \mat n_j\right) ds_j-\left(\xi_{1, 2}\bigg|_{s_1=0}-\sigma_1\right)\frac{dx_A}{dt}
-\left(\xi_{2, 2}\bigg|_{s_2=0}+\sigma_2\right)\frac{dx_B}{dt} 
\nn\\
&~~~~-\left(\xi_{3, 2}\bigg|_{s_3=0}+\sigma_1-\sigma_2\right)
\frac{dx_C}{dt},
\end{align}
where we have used the relations $\frac{dx_A}{dt}=\left(\mat V_{1, 0}\cdot \mat e_1\right)\big|_{s_1=0}$, $\frac{dx_B}{dt}=\left(\mat V_{2, 0}\cdot \mat e_1\right)\big|_{s_2=0}$ and $\frac{dx_C}{dt}=\left(\mat V_{3, 0}\cdot \mat e_1\right)\big|_{s_3=0}$. Substituting \eqref{eqn:si} into \eqref{eqn:laws_pf4}, there holds  
\begin{align}\label{eqn:laws_pf5}
    \frac{d}{dt}E(t)&=\sum_{j=1}^3\int_{\Gamma_j} \mu_j \partial_{s_js_j}\mu_j ds_j -\left(\xi_{1, 2}\bigg|_{s_1=0}-\sigma_1\right)\frac{dx_A}{dt}
-\left(\xi_{2, 2}\bigg|_{s_2=0}+\sigma_2\right)\frac{dx_B}{dt} -\left(\xi_{3, 2}\bigg|_{s_3=0}+\sigma_1-\sigma_2\right)
\frac{dx_C}{dt}. 
\end{align}
By virtue of the zero-mass flux condition \eqref{eqn:bd3} and the junction point condition \eqref{eqn:bd4_c}, we have 
\begin{align}\label{eqn:laws_pf6}
    \mu_1\partial_{s_1}\mu_1\bigg|_{s_1=0}^{s_1=L_1}+ \mu_2\partial_{s_2}\mu_2\bigg|_{s_2=0}^{s_2=L_2}+ \mu_3\partial_{s_3}\mu_3\bigg|_{s_3=0}^{s_3=L_3}=\left(\mu_1+\mu_2+\mu_3\right)\partial_{s_1}\mu_1\bigg|_{s_1=L_1}=0. 
\end{align}
Using integration by parts in \eqref{eqn:laws_pf5}, and with the help of \eqref{eqn:laws_pf6} and the relaxed contact angle condition \eqref{eqn:bd2}, we obtain 
\begin{align}\label{eqn:laws_pf7}
    \frac{d}{dt}E(t)=-\sum_{j=1}^3\int_{\Gamma_j}  \left(\partial_{s_j}\mu_j\right)^2 ds_j-\frac{1}{\eta_1}\left(\frac{dx_A}{dt}\right)^2
-\frac{1}{\eta_2}\left(\frac{dx_B}{dt}\right)^2 -\frac{1}{\eta_3}\left(
\frac{dx_C}{dt}\right)^2\leq 0.
\end{align}
Therefore, the energy dissipation \eqref{eqn:ener_decay} has been demonstrated. 
\end{proof}

\section{Variational formulation}\label{sec3}
To build the variational formulation of the sharp-interface model \eqref{eqn:si}, we first introduce a symmetric surface energy matrix: \cite{bao2023symmetrized,bao2024structure1,li2025structure}
\begin{equation}
\mat{Z}_{K, j}(\mat{n}_j) = \gamma_j(\mat{n}_j) \mat{I}_{2} - \mat{n}_j \mat \xi_j(\mat n_j)^{\top} - \mat \xi_j(\mat n_j) \mat{n}_j^{\top} + K_j(\mat{n}_j) \mat{n}_j \mat{n}_j^{\top}, \quad \mat{n}_j \in \mathbb S^{1},\quad j = 1, 2, 3, 
\end{equation}
where $\mat{I}_{2}$ is a $2\times 2$ identity matrix, and $K_j(\mat n_j)$ is a non-negative stabilizing function that can be prespecified. Then, the sharp-interface model \eqref{eqn:si} is equivalent to the following symmetric and conservative system: 
\begin{subequations}\label{eqn:model}
    \begin{align}
    & \partial_t\mat X_j\cdot\mat n_j-\partial_{s_js_j}\mu_j=0, \quad 0<s_j<L_j(t),\quad t> 0, \quad j = 1, 2, 3,\label{eqn:model_a}\\
    & \mu_j\mat n_j+\partial_{s_j}\bigg[\mat{Z}_{K, j}(\mat{n}_j)\partial_{s_j}\mat X_j\bigg]=0, \label{eqn:model_b}
\end{align}
with the initial conditions \eqref{eqn:ini} and the boundary conditions \eqref{eqn:bd1}-\eqref{eqn:bd4}.
\end{subequations}
Due to the equivalence of the models, the area conservation and energy dissipation properties of the system \eqref{eqn:model} can be naturally derived. 

We define the functional space 
\begin{align*}
    L^2(\mathbb I_j)=\bigg\{u: \mathbb I_j\rightarrow\mathbb R, \int_{\mathbb I_j}\left|u(\rho_j)\right|^2\partial_{\rho_j}s_jd\rho_j<+\infty\bigg\},\qquad j=1, 2, 3,
\end{align*}
equipped with the $L^2$ inner product 
\begin{align*}
    (u, v)_{\Gamma_j(t)}=\int_{\Gamma_j(t)}u(s_j)\,v(s_j)ds_j=\int_{\mathbb I_j}u\left(s_j(\rho_j, t)\right)\, v\left(s_j(\rho_j, t)\right)\,\partial_{\rho_j}s_jd\rho_j,\qquad j=1, 2, 3.
\end{align*}
This inner product can be directly extended to $\left[L^2(\mathbb I_j)\right]^2$, $j=1, 2, 3$. Furthermore, We define the Sobolev spaces: 
\begin{align*}
&H^1(\mathbb{I}_j)=\left \{ u:\mathbb{I}_j\to\mathbb{R},u\in L^2(\mathbb{I}_j)~~
\text{and}~~\partial_\rho u \in L^2(\mathbb{I}_j) \right \},\qquad 
\mat H^1(\mathbb{I}_j)=\left \{(u_1, u_2), u_1\in H^1(\mathbb{I}_j)~~\text{and}~~u_2\in H^1(\mathbb{I}_j) \right \},\\
&\mathbb V=\left\{\left(\mat v_1, \mat v_2, \mat v_3\right)\in \prod_{j=1}^{3} \mat H^{1} \left( \mathbb{I}_j \right),~ 
\mat v_j=(v_{j, 1}, v_{j, 2}),~
v_{j, 2}(0)=0,~ 
\mat v_1(1) =\mat v_2(1) = \mat v_3(1),~j = 1, 2, 3\right\},\\
&\mathbb W=\left\{ \left(\chi_1, \chi_2, \chi_3\right) \in \prod_{j=1}^{3} H^{1} \left( \mathbb{I}_j \right):~ \sum_{j=1}^{3} \chi_{j}(1) = 0\right\}. 
\end{align*}

We now proceed to derive the variational formulation of the system \eqref{eqn:model}. Multiplying each component of the test function \(\varphi = (\varphi_1, \varphi_2, \varphi_3) \in \mathbb W\) to \eqref{eqn:model_a}, integrating over \(\Gamma_j(t)\), applying integration by parts, and utilizing the zero-mass flux condition \eqref{eqn:bd3}, we derive
\begin{align}\label{eqn:vf_pf1}
    &\sum_{j=1}^3\bigg[\left(\partial_t\mat X_j\cdot\mat n_j, \varphi_j\right)_{\Gamma_j(t)}-\left(\partial_{s_js_j}\mu_j, \varphi_j\right)_{\Gamma_j(t)}\bigg]\nn\\
    &=\sum_{j=1}^3\left[\left(\partial_t\mat X_j\cdot\mat n_j, \varphi_j\right)_{\Gamma_j(t)}+\left(\partial_{s_j}\mu_j, \partial_{s_j}\varphi_j\right)_{\Gamma_j(t)}-\partial_{s_j}\mu_j\varphi_j\bigg|_{s_j=0}^{s_j=L_j}\right]\nn\\
    &=\sum_{j=1}^3\bigg[\left(\partial_t\mat X_j\cdot\mat n_j, \varphi_j\right)_{\Gamma_j(t)}+\left(\partial_{s_j}\mu_j, \partial_{s_j}\varphi_j\right)_{\Gamma_j(t)}\bigg]-\partial_{s_1}\mu_1\bigg|_{s_1=L_1}\bigg(\varphi_1(L_1)+\varphi_1(L_2)+\varphi_1(L_3)\bigg)\nn\\
    & = \sum_{j=1}^3\bigg[\left(\partial_t\mat X_j\cdot\mat n_j, \varphi_j\right)_{\Gamma_j(t)}+\left(\partial_{s_j}\mu_j, \partial_{s_j}\varphi_j\right)_{\Gamma_j(t)}\bigg]=0. 
\end{align}
Next, we multiply each component of the test function \(\mat\omega = (\mat\omega_1, \mat\omega_2, \mat\omega_3) \in \mathbb V\) to \eqref{eqn:model_b} and integrate over \(\Gamma_j(t)\). Then, by 
utilizing integration by parts, we obtain 
\begin{align}\label{eqn:vf_pf2}
&\sum_{j=1}^3\left[ \left(\mu_j\mat n_j, \mat\omega_j\right)_{\Gamma_j(t)}+\left(\partial_{s_j}\bigg[\mat{Z}_{K, j}(\mat{n}_j)\partial_{s_j}\mat X_j\bigg], \mat\omega_j\right)_{\Gamma_j(t)}\right] \nn\\
   &=\sum_{j=1}^3\bigg[\left(\mu_j\mat n_j, \mat\omega_j\right)_{\Gamma_j(t)}-\left(\mat{Z}_{K, j}(\mat{n}_j)\partial_{s_j}\mat X_j, \partial_{s_j}\mat\omega_j\right)_{\Gamma_j(t)}
   \bigg]+\sum_{j=1}^3\bigg[\left(\mat{Z}_{K, j}(\mat{n}_j)\partial_{s_j}\mat X_j\right)\cdot\mat\omega_j\bigg]\bigg|_{s_j=0}^{s_j=L_j}\nn\\
   &=\sum_{j=1}^3\bigg[\left(\mu_j\mat n_j, \mat\omega_j\right)_{\Gamma_j(t)}-\left(\mat{Z}_{K, j}(\mat{n}_j)\partial_{s_j}\mat X_j, \partial_{s_j}\mat\omega_j\right)_{\Gamma_j(t)}
   \bigg]+\sum_{j=1}^3\left(\mat\xi_j^\perp\cdot\mat\omega_j\right)\bigg|_{s_j=0}^{s_j=L_j}=0,
\end{align}
where we have used the relation:
\begin{align*}
    \mat{Z}_{K, j}(\mat{n}_j)\partial_{s_j} \mathbf{X}_j= \gamma_j(\mat n_j) \mat \tau_j - (\mat \xi_j(\mat n_j) \cdot \mat \tau_j)\mat n_j=\mat\xi_j^{\perp},\qquad j = 1, 2, 3. 
\end{align*}
For the last term on the left-hand side of \eqref{eqn:vf_pf2}, since \(\mat\omega = (\mat\omega_1, \mat\omega_2, \mat\omega_3) \in \mathbb V\), and by using the junction point condition \eqref{eqn:bd4_b} and the relaxed contact angle condition, we have 
\begin{align}\label{eqn:vf_pf3}
&\sum_{j=1}^3\left(\mat\xi_j^\perp\cdot\mat\omega_j\right)\bigg|_{s_j=0}^{s_j=L_j}
=\sum_{j=1}^3\left(\mat\xi_j^\perp\cdot\mat\omega_j\right)\bigg|_{s_j=L_j}-\sum_{j=1}^3\left(\mat\xi_j^\perp\cdot\mat\omega_j\right)\bigg|_{s_j=0}\nn\\
&=\mat\omega_1(L_1)\cdot\sum_{j=1}^3\mat\xi_j^\perp\bigg|_{s_j=L_j}-\bigg[\xi_{1, 2}(0)\omega_{1, 1}(0)+\xi_{2, 2}(0)\omega_{2, 1}(0)+\xi_{3, 2}(0)\omega_{3, 1}(0)\bigg]\nn\\
&=-\bigg[\xi_{1, 2}(0)\omega_{1, 1}(0)+\xi_{2, 2}(0)\omega_{2, 1}(0)+\xi_{3, 2}(0)\omega_{3, 1}(0)\bigg]\nn\\
&=-\bigg[\frac{1}{\eta_1}\frac{dx_A}{dt}+\sigma_1\bigg]\omega_{1, 1}(0)
-\bigg[\frac{1}{\eta_2}\frac{dx_B}{dt}-\sigma_2\bigg]\omega_{2, 1}(0)
-\bigg[\frac{1}{\eta_3}\frac{dx_C}{dt}+\sigma_2-\sigma_1\bigg]\omega_{3, 1}(0). 
\end{align}
From \eqref{eqn:vf_pf1}, \eqref{eqn:vf_pf2} and \eqref{eqn:vf_pf3}, we derive a symmetrized variational formulation for the system \eqref{eqn:model} with the initial conditions \eqref{eqn:ini} and the boundary conditions \eqref{eqn:bd1}-\eqref{eqn:bd4}: given $\Gamma_j(0)=\mat X_j(\rho_j, 0)=\mat X_{j, 0}(s_j), j = 1, 2, 3$ with $s_j=\rho_jL_j$, find the evolution triple junction curves $\Gamma(t)=\mat X(\rho, t)=\left(\mat X_1(\rho_1, t), \mat X_2(\rho_2, t), \mat X_3(\rho_3, t)\right)\in \mathbb V$, and the chemical potential 
$\mu(\rho, t)=\left(\mu_1(\rho_1, t), \mu_2(\rho_2, t), \mu_3(\rho_3, t)\right)\in \mathbb W$, such that 
\begin{subequations}
 \label{eqn:vf}
\begin{align}
    & \sum_{j=1}^3\bigg[\left(\partial_t\mat X_j\cdot\mat n_j, \varphi_j\right)_{\Gamma_j(t)}+\left(\partial_{s_j}\mu_j, \partial_{s_j}\varphi_j\right)_{\Gamma_j(t)}\bigg]=0,\qquad \varphi=\left(\varphi_1, \varphi_2, \varphi_3\right)\in \mathbb W,\label{eqn:vf_a}\\
    & \sum_{j=1}^3\bigg[\left(\mu_j\mat n_j, \mat\omega_j\right)_{\Gamma_j(t)}-\left(\mat{Z}_{K, j}(\mat{n}_j)\partial_{s_j}\mat X_j, \partial_{s_j}\mat\omega_j\right)_{\Gamma_j(t)}
   \bigg]-\bigg[\frac{1}{\eta_1}\frac{dx_A}{dt}+\sigma_1\bigg]\omega_{1, 1}(0)
-\bigg[\frac{1}{\eta_2}\frac{dx_B}{dt}-\sigma_2\bigg]\omega_{2, 1}(0)\nn\\
&-\bigg[\frac{1}{\eta_3}\frac{dx_C}{dt}+\sigma_2-\sigma_1\bigg]\omega_{3, 1}(0)=0,\qquad \mat\omega = \left(\mat\omega_1, \mat\omega_2, \mat\omega_3\right) \in \mathbb V,
   \label{eqn:vf_b}
\end{align}
\end{subequations}
where $x_A(t)=x_1(\rho_1=0, t)$, $x_B(t)=x_2(\rho_2=0, t)$ and $x_C(t)=x_3(\rho_3=0, t)$. 
\begin{pro}\label{pro:laws1} (Area Conservation $\&$ Energy Dissipation for Variational Formulation) 
Let \(\left(\mat X(\rho, t), \mu(\rho, t)\right) \in \mathbb V \times \mathbb W\) be a solution to the variational formulation \eqref{eqn:vf}. Then, the total area \(A(t)\) of the double-bubble thin film is conserved, as expressed in \eqref{eqn:area_conser}, and the total energy \(E(t)\) is dissipative, as shown in \eqref{eqn:ener_decay}.
\end{pro}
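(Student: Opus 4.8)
The plan is to establish both laws directly from the weak form \eqref{eqn:vf} by inserting structure-revealing test functions, reusing the purely geometric identities of Proposition~\ref{pro:laws} wherever possible. For the area, I would first recall that the identity
\begin{align*}
\frac{d}{dt}A(t)=\int_{\Gamma_2(t)}\partial_t\mat X_2\cdot\mat n_2\,ds_2-\int_{\Gamma_1(t)}\partial_t\mat X_1\cdot\mat n_1\,ds_1
\end{align*}
follows solely from the definition \eqref{eqn:area}, integration by parts, and the nodal constraints \eqref{eqn:bd1} and \eqref{eqn:bd4_a} built into $\mathbb V$, exactly as in \eqref{eqn:laws_pf2}; it therefore holds for any solution of \eqref{eqn:vf}. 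I would then test \eqref{eqn:vf_a} with the constant field $\varphi=(-1,1,0)$, which is admissible since $\sum_{j}\varphi_j(1)=0$ places it in $\mathbb W$. Because each $\varphi_j$ is constant, every term $(\partial_{s_j}\mu_j,\partial_{s_j}\varphi_j)_{\Gamma_j(t)}$ drops out, and what remains is precisely the right-hand side of the displayed identity set equal to zero, giving $\frac{d}{dt}A(t)=0$ and hence \eqref{eqn:area_conser}.

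For the energy I would first isolate a differentiation lemma for the surface part of $E(t)$. Taking time as the perturbation parameter in the computation of $\delta W_j$ in \eqref{eqn:first_pf2} (replace $\mat V_{j,0}$ by $\partial_t\mat X_j$ and convert the $\rho_j$-integral to an $s_j$-integral via $\partial_{\rho_j}=|\partial_{\rho_j}\mat X_j|\,\partial_{s_j}$) yields
\begin{align*}
\frac{d}{dt}\sum_{j=1}^3\int_{\Gamma_j(t)}\gamma_j(\mat n_j)\,ds_j=\sum_{j=1}^3\int_{\Gamma_j(t)}\mat\xi_j^{\perp}\cdot\partial_{s_j}(\partial_t\mat X_j)\,ds_j=\sum_{j=1}^3\left(\mat{Z}_{K,j}(\mat n_j)\partial_{s_j}\mat X_j,\partial_{s_j}(\partial_t\mat X_j)\right)_{\Gamma_j(t)},
\end{align*}
the last equality being the identity $\mat{Z}_{K,j}(\mat n_j)\partial_{s_j}\mat X_j=\mat\xi_j^{\perp}$ recorded after \eqref{eqn:vf_pf2}. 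Differentiating the substrate terms of \eqref{eqn:ener2} separately produces $\sigma_1\frac{dx_A}{dt}-\sigma_2\frac{dx_B}{dt}+(\sigma_2-\sigma_1)\frac{dx_C}{dt}$.

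Next I would feed the two canonical choices into \eqref{eqn:vf}. Testing \eqref{eqn:vf_b} with $\mat\omega=\partial_t\mat X$—admissible because differentiating \eqref{eqn:bd1} and \eqref{eqn:bd4_a} in $t$ shows $\partial_t\mat X\in\mathbb V$, with $\omega_{j,1}(0)$ equal to $\frac{dx_A}{dt},\frac{dx_B}{dt},\frac{dx_C}{dt}$ for $j=1,2,3$—rewrites $\sum_j(\mat{Z}_{K,j}\partial_{s_j}\mat X_j,\partial_{s_j}\partial_t\mat X_j)_{\Gamma_j}$ as $\sum_j(\mu_j\mat n_j,\partial_t\mat X_j)_{\Gamma_j}$ minus the three mobility quadratics $\frac{1}{\eta_1}(\cdot)^2,\frac{1}{\eta_2}(\cdot)^2,\frac{1}{\eta_3}(\cdot)^2$ minus exactly the $\sigma$-linear terms that cancel the substrate contribution above. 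Testing \eqref{eqn:vf_a} with $\varphi=\mu\in\mathbb W$ (membership is the continuity condition \eqref{eqn:bd4_c}, already part of the solution space) converts $\sum_j(\partial_t\mat X_j\cdot\mat n_j,\mu_j)_{\Gamma_j}$ into $-\sum_j(\partial_{s_j}\mu_j,\partial_{s_j}\mu_j)_{\Gamma_j}$. Assembling everything, the $\sigma$-linear terms cancel identically and I expect to obtain
\begin{align*}
\frac{d}{dt}E(t)=-\sum_{j=1}^3\left(\partial_{s_j}\mu_j,\partial_{s_j}\mu_j\right)_{\Gamma_j(t)}-\frac{1}{\eta_1}\Big(\frac{dx_A}{dt}\Big)^2-\frac{1}{\eta_2}\Big(\frac{dx_B}{dt}\Big)^2-\frac{1}{\eta_3}\Big(\frac{dx_C}{dt}\Big)^2\le 0,
\end{align*}
which is \eqref{eqn:ener_decay}.

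The main obstacle will be the surface-energy differentiation lemma together with the exact algebraic matching of the boundary data. Verifying $\partial_t\mat X\in\mathbb V$ and $\mu\in\mathbb W$ is routine once the boundary conditions are differentiated in time, but the delicate part is tracking signs so that the three $\sigma$-linear boundary contributions generated by testing \eqref{eqn:vf_b} with $\partial_t\mat X$ cancel precisely against those from differentiating the substrate energy, and confirming that the junction term $\sum_j\mat\xi_j^{\perp}\cdot\mat\omega_j|_{s_j=L_j}$ is already absorbed by the force-balance condition \eqref{eqn:bd4_b} in the passage to \eqref{eqn:vf_b}. This bookkeeping mirrors the strong-form computation in Proposition~\ref{pro:laws}, so I would lean on that template to keep the signs straight.
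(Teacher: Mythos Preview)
Your proposal is correct and follows essentially the same route as the paper: both arguments rely on the geometric identity \eqref{eqn:laws_pf2} for $\frac{d}{dt}A(t)$, the differentiation lemma
\[
\frac{d}{dt}\int_{\Gamma_j(t)}\gamma_j(\mat n_j)\,ds_j=\left(\mat Z_{K,j}(\mat n_j)\partial_{s_j}\mat X_j,\partial_{s_j}\partial_t\mat X_j\right)_{\Gamma_j(t)},
\]
and the insertion of $\mu$ and $\partial_t\mat X$ as test functions.

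The one noteworthy difference is in the admissibility bookkeeping. For area conservation the paper tests \eqref{eqn:vf_a} with $\varphi=(1,0,0)$, $(0,1,0)$, $(0,0,1)$ separately, and for energy it tests curve by curve with $\varphi=(\mu_1,0,0)$, $\mat\omega=(\partial_t\mat X_1,\mat 0,\mat 0)$, etc.; none of these componentwise choices actually lies in $\mathbb W$ or $\mathbb V$, since the junction constraints $\sum_j\chi_j(1)=0$ and $\mat v_1(1)=\mat v_2(1)=\mat v_3(1)$ fail. Your choices $\varphi=(-1,1,0)\in\mathbb W$, $\varphi=\mu\in\mathbb W$, and $\mat\omega=\partial_t\mat X\in\mathbb V$ are genuinely admissible and lead to the same final identity in one step. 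So your version is the cleaner execution of the same idea; the paper's three componentwise identities only become valid once they are summed, which is precisely what you do from the outset.
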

\begin{proof}
From \eqref{eqn:laws_pf2}, taking \(\varphi = (1, 0, 0)\), \(\varphi = (0, 1, 0)\), and \(\varphi = (0, 0, 1)\) respectively, we obtain
\[ \left(\partial_t \mat X_j \cdot \mat n_j, \varphi_j \right)_{\Gamma_j(t)} = 0, \quad j = 1, 2, 3, \] 
which, in conjunction with \eqref{eqn:laws_pf2}, leads to area conservation. 

Next, we intend to prove the energy dissipation of the variational formulation \eqref{eqn:vf}. By using $\partial_t\mat n_j=-\left(\mat n_j\cdot\partial_{s_j}\partial_t\mat X_j\right)\mat \tau_j$, we have 
\begin{align}\label{eqn:vf_ener_decay_pf1}
  & \frac{d}{dt}\int_{\Gamma_j(t)}\gamma_j(\mat n_j)d\Gamma_j 
   =\frac{d}{dt}\int_0^1\gamma_j(\mat n_j)\partial_{\rho_j}s_jd\rho_j 
   =\int_0^1\frac{d}{dt}\gamma_j(\mat n_j)\partial_{\rho_j}s_jd\rho_j 
   +\int_0^1\gamma_j(\mat n_j)\frac{d}{dt}\left|\partial_{\rho_j}\mat X_j\right|d\rho_j \nn\\
   &~~~=\int_0^1\nabla\gamma_j(\mat n_j)\frac{d\mat n_j}{dt}\partial_{\rho_j}s_jd\rho_j 
   +\int_0^1\gamma_j(\mat n_j)\frac{\partial_{\rho_j}\mat X_j\cdot\partial_t\partial_{\rho_j}\mat X_j}{\left|\partial_{\rho_j}\mat X_j\right|}d\rho_j \nn\\
   &~~~=-\int_{\Gamma_j(t)}\left[\nabla\gamma_j(\mat n_j)\cdot\mat\tau_j\right]
   \left[\mat n_j\cdot\partial_{s_j}\partial_t\mat X_j\right]ds_j 
   +\int_{\Gamma_j(t)}\gamma_j(\mat n_j)
   \left[\mat \tau_j\cdot\partial_{s_j}\partial_t\mat X_j\right]
  ds_j \nn\\
  &~~~=\left(\mat Z_{K, j}(\mat n_j)\partial_{s_j}\mat X_j, \partial_{s_j}\partial_t\mat X_j\right)_{\Gamma_j(t)}.
\end{align}
From \eqref{eqn:vf_ener_decay_pf1}, we have 
\begin{align}\label{eqn:vf_ener_decay_pf2}
    \frac{d}{dt}E(t) =&
    \sum_{j=1}^3\left[\frac{d}{dt}\int_{\Gamma_j(t)}\gamma_j(\mat n_j)d\Gamma_j\right]-\sigma_1\bigg(\frac{dx_C(t)}{dt}-\frac{dx_A(t)}{dt}\bigg)
    -\sigma_2\bigg(\frac{dx_B(t)}{dt}-\frac{dx_C(t)}{dt}\bigg)\nn\\
    =&\sum_{j=1}^3\left(\mat Z_{K, j}(\mat n_j)\partial_{s_j}\mat X_j, \partial_{s_j}\partial_t\mat X_j\right)_{\Gamma_j(t)}-\sigma_1\bigg(\frac{dx_C(t)}{dt}-\frac{dx_A(t)}{dt}\bigg)-\sigma_2\bigg(\frac{dx_B(t)}{dt}-\frac{dx_C(t)}{dt}\bigg). 
\end{align}
Denoting  $\varphi=\left(\mu_1, 0, 0\right)$ in \eqref{eqn:model_a} and $\mat\omega=\left(\partial_t\mat X_1, \mat 0, \mat 0\right)$ in \eqref{eqn:model_b}, 
taking the sum of the resulting equations, we have 
\begin{align}\label{eqn:vf_ener_decay_pf3}
    \left(\mat Z_{K, 1}(\mat n_1)\partial_{s_1}\mat X_1, \partial_{s_1}\partial_t\mat X_1\right)_{\Gamma_1(t)}+\sigma_1\frac{dx_A}{dt}
    =-\frac{1}{\eta_1}\left(\frac{dx_A}{dt}\right)^2-\left(\partial_{s_1}\mu_1, \partial_{s_1}\mu_1\right)_{\Gamma_1(t)}. 
\end{align}
Similarly, by taking \(\varphi = (0, \mu_2, 0)\) in \eqref{eqn:model_a} and \(\mat \omega = (\mat 0, \partial_t \mat X_2, \mat 0)\) in \eqref{eqn:model_b}, or alternatively, by taking \(\varphi = (0, 0, \mu_3)\) in \eqref{eqn:model_a} and \(\mat \omega = (\mat 0, \mat 0, \partial_t \mat X_3)\) in \eqref{eqn:model_b}, we obtain 
\begin{align}
& \left(\mat Z_{K, 2}(\mat n_2)\partial_{s_2}\mat X_2, \partial_{s_2}\partial_t\mat X_2\right)_{\Gamma_2(t)}-\sigma_2\frac{dx_B}{dt}
    =-\frac{1}{\eta_2}\left(\frac{dx_B}{dt}\right)^2-\left(\partial_{s_2}\mu_2, \partial_{s_2}\mu_2\right)_{\Gamma_2(t)},\label{eqn:vf_ener_decay_pf4}\\
& \left(\mat Z_{K, 3}(\mat n_3)\partial_{s_3}\mat X_3, \partial_{s_3}\partial_t\mat X_3\right)_{\Gamma_3(t)}-\left(\sigma_1-\sigma_2\right)\frac{dx_C}{dt}
    =-\frac{1}{\eta_3}\left(\frac{dx_C}{dt}\right)^2-\left(\partial_{s_3}\mu_3, \partial_{s_3}\mu_3\right)_{\Gamma_3(t)}. \label{eqn:vf_ener_decay_pf5}
\end{align}
By combining \eqref{eqn:vf_ener_decay_pf3}, \eqref{eqn:vf_ener_decay_pf4} and \eqref{eqn:vf_ener_decay_pf5}, and from \eqref{eqn:vf_ener_decay_pf2}, we derive 
\begin{align}
    \frac{d}{dt}E(t) = 
    -\sum_{j=1}^3\left(\partial_{s_j}\mu_j, \partial_{s_j}\mu_j\right)_{\Gamma_j(t)}-\frac{1}{\eta_1}\left(\frac{dx_A}{dt}\right)^2
    -\frac{1}{\eta_2}\left(\frac{dx_B}{dt}\right)^2
    -\frac{1}{\eta_3}\left(\frac{dx_C}{dt}\right)^2\leq 0,
\end{align}
which implies the total energy dissipation. 
\end{proof}

\section{Parametric finite element approximation}\label{sec4}

In this section, we propose a structure-preserving parametric finite element approximation for the variational formulation \eqref{eqn:vf}. The time interval \([0, T]\) is discretized into \(M\) subintervals, such that \([0, T] = \cup_{k=0}^{M-1}[t_k, t_{k+1}]\), where the time step sizes are given by \(\ttau_k = t_{k+1} - t_k\) for \(k \geq 0\). 
Additionally, each spatial domain \(\mathbb{I}_j\) for \(j = 1, 2, 3\) is divided into \(N_j\) subintervals, such that \(\mathbb{I}_j = \cup_{k=1}^{N_j} \mathbb{I}_{j,k} = \cup_{k=1}^{N_j} [q_{j, k-1}, q_{j, k}]\), with nodes defined as \(q_{j, k} = k h_j\), for \(k = 0, 1, \dots, N_j\), and the spatial step size in each domain is \(h_j = N_j^{-1}\).

Define the finite element spaces 
\begin{align*}
\mathbb K ^h (\mathbb I_j) = \left\{u \in C(\mathbb I_j):~ u\big|_{\mathbb I_{j, k}}\in\mathbb P_1,~ k=1, 2\dots ,N_j\right\}\subseteq H^1(\mathbb I_j),
\qquad j = 1, 2, 3, 
\end{align*}
which $\mathbb P_1$ denotes the space of polynomials with degree at most $1$. 
We further define the spaces: 
\begin{align*}
    \mathbb V^h=\left[\prod_{j=1}^{3}\left(\mathbb K ^h (\mathbb I_j)\right)^2\right]\bigcap \mathbb V,\qquad \mathbb W^h=\left[\prod_{j=1}^{3}\mathbb K ^h (\mathbb I_j)\right]\bigcap \mathbb W.
\end{align*}

We denote 
\begin{align*}
    \Gamma^m:=\bigg(\Gamma_1^m, \Gamma_2^m, \Gamma_3^m\bigg)
    =\mat X^m:=\bigg(\mat X_1^m, \mat X_2^m, \mat X_3^m\bigg)\in \mathbb V^h,\qquad 
    \mu^m=\bigg(\mu_1^m, \mu_2^m, \mu_3^m\bigg)\in \mathbb W^h
\end{align*}
as the approximations of the triple junction curves $\Gamma(t_m):=\mat X(\cdot, t_m)\in \mathbb V$ and the chemical potential $\mu(\cdot, t_m)\in \mathbb W$, respectively. 
For simplicity, we
denote $\mat X_j^m=(x_j^m, y_j^m)$, $j=1, 2, 3$. 
Each approximation solution $\Gamma_j ^m$, $j=1, 2, 3$ is made up of the line segments
\begin{equation*}
\mat h_{j, k}^m := \mat X_j^m(\rho_k) - \mat X_j^m(\rho_{k-1}),\quad j=1, 2, 3, \quad k= 1, 2, \dots , N_j, 
\end{equation*}
with $| \mat h_{j, k}^m| $ denoting the length of $\mat h_{j, k}^m$. 
The unit tangential vector $\mat\tau_{j, k}^m$ and the outward unit normal vector $\mat n_{j, k}^m$ on each interval $\mathbb I_j$ can be computed by 
\begin{equation}
\mat\tau_j ^m\bigg|_{\mathbb I_k} = \frac{\mat h_{j, k}^m}{\left | \mat h_{j, k}^m\right |} := \mat\tau_{j, k}^m, \qquad \mat n_j^m\bigg|_{\mathbb I_k} = -\frac{\left(\mat h_{j, k}^m\right)^\bot}{\left |\mat h_{j, k}^m \right |} := \mat n_{j, k}^m ,\quad j=1, 2, 3, \quad k= 1, 2, \dots , N_j.
\end{equation}
Furthermore, we can define the mass lumped inner product $(\cdot, \cdot)_{\Gamma_j ^m}^h$ for two functions $\mat u$ and $\mat v$ with possible jumps at the nodes $\left\{\rho_{j, k}\right\}_{k=0}^{N_j}$ as follows. 
\begin{equation}
(\mat u, \mat v)_{\Gamma_j ^m}^h := \frac{1}{2}\sum_{k=1}^{N_j}{\left| \mat h_{j, k}^m \right|}\left[( \mat u\cdot \mat v)(\rho_{j, k}^-)+( \mat u\cdot \mat v)(\rho_{j, k-1}^+)\right], \qquad j=1, 2, 3, 
\end{equation}
where $\mat v(\rho_{j, k}^\pm ) = \lim_{\rho \to \rho_{j, k}^{\pm}} \mat v(\rho)$ for $0\le k\le N_j$. 

In what follows, given the triple junction curves \(\Gamma^m\), we aim to design a numerical method to compute \(\mat X^{m+1} \in \mathbb{V}^h\), which subsequently determines the updated triple junction curves \(\Gamma^{m+1}\). More importantly, the numerical method should preserve key geometric properties similar to those of the continuous model, including area conservation and energy stability. Motivated by \cite{jiang2021perimeter,bao2021structure,li2025structure}, we introduce a time-weighted approximation: 
\begin{align}\label{eqn:n}
    \mat n_j^{m+\frac12}=-\frac{1}{2}
    \bigg[\partial_{s_j}\mat X_j^m+\partial_{s_j}\mat X_j^{m+1} \bigg]^\perp
    =-\frac{\bigg[\partial_{\rho_j}\mat X_j^m+\partial_{\rho_j}\mat X_j^{m+1} \bigg]^\perp}{2\left|\partial_{\rho_j}\mat X_j^m\right|},\qquad j = 1, 2, 3.
\end{align}
Then, we build the following symmetrized SP-PFEM for the variational formulation \eqref{eqn:vf}. Given $\mat X^0=\left(\mat X_1^0, \mat X_2^0, \mat X_3^0\right)\in\mathbb V^h$, find $\left(\mat X^{m+1}, \mu^{m+1}\right)\in \mathbb V^h\times \mathbb W^h$ with $\mat X^{m+1}=\left(\mat X_1^{m+1}, \mat X_2^{m+1}, \mat X_3^{m+1}\right)$ and  $\mu^{m+1}=\left(\mu_1^{m+1}, \mu_2^{m+1}, \mu_3^{m+1}\right)$, such that 
\begin{subequations}\label{eqn:num}
    \begin{align}
        & \sum_{j=1}^3\left[\left(\frac{\mat X_j^{m+1}-\mat X_j^m}{\Delta t_m}\cdot\mat n_j^{m+\frac12}, \varphi_j^h\right)^h_{\Gamma_j^m}
        +\left(\partial_{s_j}\mu_j^{m+1}, \partial_{s_j}\varphi_j^h\right)^h_{\Gamma_j^m}
        \right]=0, \qquad \varphi^h=\left(\varphi_1^h, \varphi_2^h, \varphi_3^h\right) \in \mathbb W^h, \label{eqn:num_a}\\
        & \sum_{j=1}^3\left[\left(\mu_j^{m+1}\mat n_j^{m+\frac12}, \mat\omega_j^h\right)^h_{\Gamma_j^m}-\left(\mat{Z}_{K, j}(\mat{n}_j^m)\partial_{s_j}\mat X_j^{m+1}, \partial_{s_j}\mat\omega_j^h\right)^h_{\Gamma_j^m}
   \right]-\left[\frac{1}{\eta_1}\frac{x_A^{m+1}-x_A^m}{\Delta t_m}+\sigma_1\right]\omega_{1, 1}^h(0)
\nn\\
&-\left[\frac{1}{\eta_2}\frac{x_B^{m+1}-x_B^m}{\Delta t_m}-\sigma_2\right]\omega_{2, 1}^h(0)-\left[\frac{1}{\eta_3}\frac{x_C^{m+1}-x_C^m}{\Delta t_m}+\sigma_2-\sigma_1\right]\omega_{3, 1}^h(0)=0,\qquad \mat\omega^h = \left(\mat\omega_1^h, \mat\omega_2^h, \mat\omega_3^h\right) \in \mathbb V^h.
   \label{eqn:num_b}
    \end{align}
\end{subequations}

\begin{rem}
The numerical method described in \eqref{eqn:num} is implicit and can be efficiently solved using iterative techniques such as Picard’s or Newton’s method. When \(\mat n_j^{m+\frac12}\) is replaced by \(\mat n_j^m\), the resulting scheme becomes linear. Although this modification results in a loss of area conservation, the scheme still preserves energy stability. We refer to this method as ES-PFEM, and will provide comparisons in the numerical experiments.
However, the absence of area conservation may affect the accuracy of geometric features, especially in long-term simulations.
\end{rem}

Define the total area of the enclosed
area by the triple-junction piecewise linear curves and the substrates as 
\begin{align}\label{eqn:num_area}
    A^m=\frac12\sum_{k=1}^{N_2}\left(x_{2, k}^m-x_{2, k-1}^m\right)\left(y_{2, k}^m+y_{2, k-1}^m\right)-\frac12\sum_{k=1}^{N_1}\left(x_{1, k}^m-x_{1, k-1}^m\right)\left(y_{1, k}^m+y_{1, k-1}^m\right),\qquad m\geq 0. 
\end{align}
Then, for the numerical scheme \eqref{eqn:num}, we have the following area conservation property. 
\begin{pro}\label{pro:num_area_conser}
Let $\left(\mat X^m, \mu^m\right)\in \mathbb V^h\times \mathbb W^h$ be a solution of the numerical scheme \eqref{eqn:num}. Then, the total area defined in \eqref{eqn:num_area} is conservative in the sense that 
\begin{align}\label{eqn:num_area_conser}
    A^m\equiv A^0,\qquad m\geq 0. 
\end{align}
\end{pro}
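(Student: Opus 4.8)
The plan is to establish \eqref{eqn:num_area_conser} by induction on $m$, reducing everything to the single-step identity $A^{m+1}=A^m$. The strategy mirrors the continuous computation that produced \eqref{eqn:laws_pf2}: I would first isolate a purely geometric/algebraic identity expressing the increment $A^{m+1}-A^m$ of the trapezoidal area as a mass-lumped normal-velocity functional evaluated against the time-weighted normal $\mat n_j^{m+\frac12}$ of \eqref{eqn:n}, and then insert a suitably chosen \emph{constant} test function into the discrete weak form \eqref{eqn:num_a} to show that this functional vanishes.

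First I would prove the key \emph{geometric identity}
\[
A^{m+1}-A^m=\Big((\mat X_2^{m+1}-\mat X_2^m)\cdot\mat n_2^{m+\frac12},\,1\Big)^h_{\Gamma_2^m}-\Big((\mat X_1^{m+1}-\mat X_1^m)\cdot\mat n_1^{m+\frac12},\,1\Big)^h_{\Gamma_1^m}.
\]
Since $A^m$ in \eqref{eqn:num_area} is a sum of edge contributions $\tfrac12(x_{j,k}^m-x_{j,k-1}^m)(y_{j,k}^m+y_{j,k-1}^m)$, I would evaluate $A^{m+1}-A^m$ edge by edge. Writing $\mat h_{j,k}^{m+1}:=\mat X_j^{m+1}(\rho_k)-\mat X_j^{m+1}(\rho_{k-1})$, the definition \eqref{eqn:n} makes $\mat n_j^{m+\frac12}$ equal to the constant vector $-\tfrac12(\mat h_{j,k}^m+\mat h_{j,k}^{m+1})^\perp/|\mat h_{j,k}^m|$ on edge $k$, while $\mat X_j^{m+1}-\mat X_j^m$ is affine there; substituting into the mass-lumped product collapses the edge term to
\[
-\tfrac14\big(\mat h_{j,k}^m+\mat h_{j,k}^{m+1}\big)^\perp\cdot\big[(\mat X_j^{m+1}-\mat X_j^m)(\rho_k)+(\mat X_j^{m+1}-\mat X_j^m)(\rho_{k-1})\big],
\]
which I would then verify equals the increment of the shoelace contribution of that edge via a two-dimensional cross-product identity. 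This is exactly the step where the Crank--Nicolson average normal $\mat n_j^{m+\frac12}$ is indispensable: a single-level normal $\mat n_j^m$ leaves an $O(\Delta t_m^2)$ remainder and yields only approximate area conservation (consistent with the Remark on the ES-PFEM).

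Next I would take the test function $\varphi^h=(1,-1,0)\in\mathbb W^h$. It is admissible because each component is constant, hence lies in $\mathbb K^h(\mathbb I_j)$, and $\sum_{j=1}^3\varphi_j^h(1)=1-1+0=0$; indeed it is precisely the constraint defining $\mathbb W$ that forces this particular $\Gamma_1$--$\Gamma_2$ combination while letting $\Gamma_3$ drop out, matching the fact that the middle curve does not appear in \eqref{eqn:num_area}. Since all three components are constant, $\partial_{s_j}\varphi_j^h=0$, so every diffusion term $(\partial_{s_j}\mu_j^{m+1},\partial_{s_j}\varphi_j^h)^h_{\Gamma_j^m}$ in \eqref{eqn:num_a} vanishes and what remains is
\[
\Big(\tfrac{\mat X_1^{m+1}-\mat X_1^m}{\Delta t_m}\cdot\mat n_1^{m+\frac12},\,1\Big)^h_{\Gamma_1^m}-\Big(\tfrac{\mat X_2^{m+1}-\mat X_2^m}{\Delta t_m}\cdot\mat n_2^{m+\frac12},\,1\Big)^h_{\Gamma_2^m}=0.
\]
Multiplying by $\Delta t_m$ and comparing with the geometric identity gives $A^{m+1}-A^m=0$, and induction on $m$ yields \eqref{eqn:num_area_conser}.

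The main obstacle I anticipate is the edge-level geometric identity of the first step: arranging the orientation and sign conventions so that the mass-lumped normal-velocity term reproduces the signed trapezoidal area \emph{exactly} rather than up to a consistency error, and checking that \eqref{eqn:n} delivers precisely the averaged edge vector $\tfrac12(\mat h_{j,k}^m+\mat h_{j,k}^{m+1})$ required for the shoelace cancellation. Everything afterward is routine: the admissibility of $(1,-1,0)$ in $\mathbb W^h$ and the vanishing of the discrete Dirichlet terms are immediate once the identity is in hand.
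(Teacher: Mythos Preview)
Your proposal is correct and shares the paper's two-step skeleton---a geometric identity expressing $A^{m+1}-A^m$ as the mass-lumped normal-velocity functional with $\mat n_j^{m+\frac12}$, followed by the insertion of a constant test function into \eqref{eqn:num_a}---but the execution differs in both steps. For the geometric identity the paper introduces the linear-in-$\alpha$ homotopy $\mat X_j^h(\rho_j,\alpha)=(1-\alpha)\mat X_j^m+\alpha\mat X_j^{m+1}$, computes $dA^h(\alpha)/d\alpha$ via the Reynolds transport theorem, and integrates over $\alpha\in[0,1]$; the averaged normal \eqref{eqn:n} then falls out automatically as the $\alpha$-average, which is conceptually clean and recycles the continuous machinery of \eqref{eqn:laws_pf2}. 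Your direct edge-by-edge shoelace verification is more elementary, stays entirely at the discrete level, and makes explicit the cross-product cancellation that pins down why the Crank--Nicolson average is the unique choice giving exact conservation. For the second step the paper inserts $\varphi^h=(\Delta t_m,0,0)$ and $\varphi^h=(0,\Delta t_m,0)$ separately, whereas you use the single combination $\varphi^h=(1,-1,0)$; your choice actually lies in $\mathbb W^h$ (it satisfies the junction constraint $\sum_j\chi_j(1)=0$), while the paper's individual choices do not, so your handling of admissibility is in fact more careful.
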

\begin{proof}
We define a approximate solution $\Gamma_j^h(\alpha)=\mat X_j^h(\rho_j, \alpha)$ by the linear interpolation of $\mat X_j^{m}$ and $\mat X_j^{m+1}$, satisfying that 
\begin{align}\label{eqn:num_area_conser_pf1}
    \mat X_j^h(\rho_j, \alpha)=(1-\alpha)\mat X_j^m(\rho_j)+\alpha\mat X_j^{m+1}(\rho_j),\qquad 0\leq \alpha\leq 1. 
\end{align}
We denote by $\mat n_j^h$ the outward unit normal vector of $\Gamma_j^h(\alpha)$. Denote $\Gamma^h(\alpha)=\left(\Gamma_1^h(\alpha), \Gamma_2^h(\alpha), \Gamma_3^h(\alpha)\right)$, and $\mat X_j^h=\left(x_j^h, y_j^h\right)$ for $j=1, 2, 3$. 
Define $A^h(\alpha)$ as the total area enclosed by the triple junction curves $\Gamma^h(\alpha)$ and the substrates, given by 
\begin{align}\label{eqn:num_area_conser_pf2}
    A^h(\alpha)=\int_{\Gamma_2^h(\alpha)}y_2^h(s_2, \alpha)\partial_{s_2}x_2^h(s_2, \alpha)ds_2-\int_{\Gamma_1^h(\alpha)}y_1^h(s_1, \alpha)\partial_{s_1}x_1^h(s_1, \alpha)ds_1.
\end{align}
Using the Reynolds transport theorem, we obtain 
\begin{align}\label{eqn:num_area_conser_pf3}
  \frac{dA^h(\alpha)}{d\alpha} =
  \int_{\Gamma_2^h(\alpha)}\partial_\alpha\mat X_2^h\cdot\mat n_2^hds_2
  -
  \int_{\Gamma_1^h(\alpha)}\partial_\alpha\mat X_1^h\cdot\mat n_1^hds_1. 
\end{align}
Substituting \eqref{eqn:num_area_conser_pf1} into \eqref{eqn:num_area_conser_pf3} gives 
\begin{align}\label{eqn:num_area_conser_pf4}
  \frac{dA^h(\alpha)}{d\alpha} =
  &-
  \int_{\mathbb I_2}\bigg[\mat X_2^{m+1}-\mat X_2^{m}\bigg]\cdot\bigg[(1-\alpha)\partial_{\rho_2}\mat X_2^m+\alpha\partial_{\rho_2}\mat X_2^{m+1} \bigg]^\perp d\rho_2\nn\\
&+  \int_{\mathbb I_1}\bigg[\mat X_1^{m+1}-\mat X_1^{m}\bigg]\cdot\bigg[(1-\alpha)\partial_{\rho_1}\mat X_1^m+\alpha\partial_{\rho_1}\mat X_1^{m+1} \bigg]^\perp d\rho_1.
\end{align}
By integrating \eqref{eqn:num_area_conser_pf4} with respect to the variable $\alpha$ from $0$ to $1$, we get 
\begin{align}\label{eqn:num_area_conser_pf5}
 A^{m+1}-A^m=A^h(1)-A^h(0)=\bigg(\left[\mat X_2^{m+1}-\mat X_2^{m}\right]\cdot\mat n_2^{m+\frac12}, 1\bigg)_{\Gamma_2^m}^h
   -\bigg(\left[\mat X_1^{m+1}-\mat X_1^{m}\right]\cdot\mat n_1^{m+\frac12}, 1\bigg)_{\Gamma_1^m}^h.
\end{align}
From \eqref{eqn:num_area_conser_pf5}, and taking $\varphi^h=\left(\Delta t_m, 0, 0\right)$ and $\varphi^h=\left(0, \Delta t_m, 0\right)$ in \eqref{eqn:num_a} respectively, we finally arrive at $A^{m+1}-A^m=0$, which further implies the area conservation \eqref{eqn:num_area_conser}.  
\end{proof}

Define the total discrete free energy 
\begin{align}\label{eqn:num_ener}
    E^m=\sum_{j=1}^3\sum_{k=1}^{N_j}\left|\mat h_{j, k}^m\right|\gamma_j(\mat n_{j, k}^m)-\sigma_1\left(x_C^m-x_A^m\right)-\sigma_2\left(x_B^m-x_C^m\right),\qquad m\geq 0. 
\end{align}
We then obtain the following energy stability of the numerical scheme \eqref{eqn:num}. 
\begin{pro}
 Denote $\left(\mat X^m, \mu^m\right)\in \mathbb V^h\times \mathbb W^h$ as a solution of the numerical scheme \eqref{eqn:num}. Then, the total free energy defined in \eqref{eqn:num_ener} is stable in the sense that 
\begin{align}\label{eqn:num_ener_stable}
    E^{m+1}\leq E^m\leq \cdots\leq E^0, \qquad m\geq 0, 
\end{align}
provided that 
\begin{align}\label{eqn:num_ener_cond}
    \gamma(-\mat n_j)< 3\gamma(\mat n_j),\qquad \forall \mat n_j\in\mathbb S^1,
\end{align}
and the stability function $k(\mat n_j)$ is taken sufficiently large. 
\end{pro}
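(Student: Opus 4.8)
The plan is to reproduce at the fully discrete level the cancellation structure that drives the continuous dissipation law in Proposition~\ref{pro:laws1}, isolating the anisotropy into a single edge-wise matrix inequality. First I would test the scheme against the discrete analogues of the multipliers used in the continuous proof: take $\varphi^h=\mu^{m+1}\in\mathbb W^h$ in \eqref{eqn:num_a} and $\mat\omega^h=\mat X^{m+1}-\mat X^m\in\mathbb V^h$ in \eqref{eqn:num_b}. Both are admissible, since $\mu^{m+1}$ is the computed potential and $\mathbb V^h$ is a linear space containing both iterates (so the contact-point and junction constraints pass to the difference). Multiplying \eqref{eqn:num_a} by $\Delta t_m$ and subtracting \eqref{eqn:num_b}, the mixed terms $\big(\mu_j^{m+1}\mat n_j^{m+\frac12},\,\mat X_j^{m+1}-\mat X_j^m\big)^h_{\Gamma_j^m}$ cancel verbatim, because the same weighted normal $\mat n_j^{m+\frac12}$ of \eqref{eqn:n} appears in both equations. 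The contact-point brackets then split into the dissipative squares $\tfrac{1}{\eta_i\Delta t_m}(\cdot)^2$ plus the $\sigma$-linear pieces, and I would recognize the latter, using $E^m_{\mathrm{sub}}:=-\sigma_1(x_C^m-x_A^m)-\sigma_2(x_B^m-x_C^m)$ from \eqref{eqn:num_ener}, as exactly $E^m_{\mathrm{sub}}-E^{m+1}_{\mathrm{sub}}$. This produces the discrete balance
\[
\sum_{j=1}^3\Big(\mat Z_{K,j}(\mat n_j^m)\partial_{s_j}\mat X_j^{m+1},\,\partial_{s_j}(\mat X_j^{m+1}-\mat X_j^m)\Big)^h_{\Gamma_j^m}+\big(E^{m+1}_{\mathrm{sub}}-E^m_{\mathrm{sub}}\big)=-\Delta t_m\sum_{j=1}^3\big(\partial_{s_j}\mu_j^{m+1},\partial_{s_j}\mu_j^{m+1}\big)^h_{\Gamma_j^m}-\frac{(x_A^{m+1}-x_A^m)^2}{\eta_1\Delta t_m}-\frac{(x_B^{m+1}-x_B^m)^2}{\eta_2\Delta t_m}-\frac{(x_C^{m+1}-x_C^m)^2}{\eta_3\Delta t_m}\le 0.
\]

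Next I would reduce the claim to the surface-energy part alone. Writing $E^{m+1}-E^m=(E^{m+1}_{\mathrm{surf}}-E^m_{\mathrm{surf}})+(E^{m+1}_{\mathrm{sub}}-E^m_{\mathrm{sub}})$ with $E^m_{\mathrm{surf}}=\sum_{j}\sum_{k}|\mat h_{j,k}^m|\gamma_j(\mat n_{j,k}^m)$, the balance above shows it suffices to establish the \emph{local energy estimate}
\[
E^{m+1}_{\mathrm{surf}}-E^m_{\mathrm{surf}}\le\sum_{j=1}^3\Big(\mat Z_{K,j}(\mat n_j^m)\partial_{s_j}\mat X_j^{m+1},\,\partial_{s_j}(\mat X_j^{m+1}-\mat X_j^m)\Big)^h_{\Gamma_j^m},
\]
since combining it with the balance immediately gives $E^{m+1}-E^m\le 0$ and hence \eqref{eqn:num_ener_stable} by induction. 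By the mass lumping and piecewise linearity, together with the identity $\mat Z_{K,j}(\mat n_j)\partial_{s_j}\mat X_j=\mat\xi_j^\perp$, both sides decouple over edges and the estimate collapses to the scalar inequality, on each edge,
\[
|\mat h^{m+1}|\gamma_j(\mat n^{m+1})-|\mat h^m|\gamma_j(\mat n^m)\le\frac{1}{|\mat h^m|}\big(\mat Z_{K,j}(\mat n^m)\mat h^{m+1}\big)\cdot\big(\mat h^{m+1}-\mat h^m\big).
\]

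The hard part will be precisely this edge-wise inequality, which is the technical heart carried over from the symmetrized-PFEM framework. I would parametrize $\mat n^{m+1}$ by its angle relative to $\mat n^m$, expand $\mat Z_{K,j}(\mat n^m)$ from its definition (using its symmetry and the relation $\mat Z_{K,j}(\mat n^m)\mat h^m=|\mat h^m|\mat\xi_j^\perp(\mat n^m)$), and reduce the claim to the nonnegativity of a single scalar function of that angle. The structural hypothesis $\gamma_j(-\mat n)<3\gamma_j(\mat n)$ of \eqref{eqn:num_ener_cond} is exactly what guarantees a \emph{finite} minimal stabilizing function $k_0(\mat n)\ge 0$ exists, and choosing the stabilizing function $k(\mat n)\ge k_0(\mat n)$ renders that scalar function nonnegative uniformly in the edge geometry. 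Everything outside this one estimate is bookkeeping of the cancellations in the first step; the delicate point is verifying the scalar inequality holds for all admissible angle differences, which is where both \eqref{eqn:num_ener_cond} and the largeness of $k(\mat n)$ are genuinely used.
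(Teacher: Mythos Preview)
Your proposal is correct and follows essentially the same approach as the paper: test with $\mu^{m+1}$ and $\mat X^{m+1}-\mat X^m$, cancel the mixed $\mat n^{m+\frac12}$ terms, separate the contact-point brackets into dissipative squares plus the substrate-energy increment, and then invoke the edge-wise inequality $|\mat h^{m+1}|\gamma_j(\mat n^{m+1})-|\mat h^m|\gamma_j(\mat n^m)\le |\mat h^m|^{-1}\big(\mat Z_{K,j}(\mat n^m)\mat h^{m+1}\big)\cdot(\mat h^{m+1}-\mat h^m)$, which the paper simply cites as \cite[Lemma~3.2]{bao2024structure1} under \eqref{eqn:num_ener_cond} with $k(\mat n_j)$ large. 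Your choice to test with the full triples at once is in fact cleaner than the paper's component-by-component presentation, since the individual pieces $(\Delta t_m\mu_1^{m+1},0,0)$ and $(\mat X_1^{m+1}-\mat X_1^m,\mat 0,\mat 0)$ need not lie in $\mathbb W^h$ and $\mathbb V^h$ (they violate the junction constraints), whereas the full $\mu^{m+1}\in\mathbb W^h$ and $\mat X^{m+1}-\mat X^m\in\mathbb V^h$ are automatically admissible.
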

\begin{proof}   
Substituting $\varphi^h=\left(\Delta t_m\mu_1^{m+1}, 0, 0\right)$ in \eqref{eqn:num_a} and $\omega^h=\left(\mat X_1^{m+1}-\mat X_1^m, 0, 0\right)$ in \eqref{eqn:num_b}, then taking the sum of two resulting equations, we can obtain that 
\begin{align}\label{eqn:num_ener_stable_pf1}
    \left(\mat{Z}_{K, 1}(\mat{n}_1^m)\partial_{s_1}\mat X_1^{m+1}, \partial_{s_1}\mat X_1^{m+1}-\partial_{s_1}\mat X_1^{m}\right)_{\Gamma_1^m}^h
    +\sigma_1\left(x_A^{m+1}-x_A^m\right)
    =-\frac{\left(x_A^{m+1}-x_A^m\right)^2}{\eta_1\Delta t_m}
    -\Delta t_m \left(\partial_{s_1}\mu_1^{m+1}, \partial_{s_1}\mu_1^{m+1}\right)^h_{\Gamma_1^m}.
\end{align}
Similarly, taking $\varphi^h=\left(0, \Delta t_m\mu_2^{m+1}, 0\right)$ in \eqref{eqn:num_a} and $\omega^h=\left(0, \mat X_2^{m+1}-\mat X_2^m, 0\right)$ in \eqref{eqn:num_b}, or alternatively, taking $\varphi^h=\left(0, 0, \Delta t_m\mu_3^{m+1}\right)$ in \eqref{eqn:num_a} and $\omega^h=\left(0, 0, \mat X_3^{m+1}-\mat X_3^m\right)$ in \eqref{eqn:num_b}, we obtain 
\begin{align}
&\left(\mat{Z}_{K, 2}(\mat{n}_2^m)\partial_{s_2}\mat X_2^{m+1}, \partial_{s_2}\mat X_2^{m+1}-\partial_{s_2}\mat X_2^{m}\right)_{\Gamma_2^m}^h
    -\sigma_2\left(x_B^{m+1}-x_B^m\right)
    =-\frac{\left(x_B^{m+1}-x_B^m\right)^2}{\eta_2\Delta t_m}
    -\Delta t_m \left(\partial_{s_2}\mu_2^{m+1}, \partial_{s_2}\mu_2^{m+1}\right)^h_{\Gamma_2^m},\label{eqn:num_ener_stable_pf2}\\
    & \left(\mat{Z}_{K, 3}(\mat{n}_3^m)\partial_{s_3}\mat X_3^{m+1}, \partial_{s_3}\mat X_3^{m+1}-\partial_{s_3}\mat X_3^{m}\right)_{\Gamma_3^m}^h
    +\left(\sigma_2-\sigma_1\right)\left(x_C^{m+1}-x_C^m\right)
    =-\frac{\left(x_C^{m+1}-x_C^m\right)^2}{\eta_3\Delta t_m}
    -\Delta t_m \left(\partial_{s_3}\mu_3^{m+1}, \partial_{s_3}\mu_3^{m+1}\right)^h_{\Gamma_3^m}.
    \label{eqn:num_ener_stable_pf3}
\end{align}
Combining \eqref{eqn:num_ener_stable_pf1}, \eqref{eqn:num_ener_stable_pf2} and \eqref{eqn:num_ener_stable_pf3}, we have 
\begin{align}\label{eqn:num_ener_stable_pf4}
   & \sum_{j=1}^3 \left(\mat{Z}_{K, j}(\mat{n}_j^m)\partial_{s_j}\mat X_j^{m+1}, \partial_{s_j}\mat X_j^{m+1}-\partial_{s_j}\mat X_j^{m}\right)_{\Gamma_j^m}^h
    +\sigma_1\left(x_A^{m+1}-x_A^m\right)-\sigma_2\left(x_B^{m+1}-x_B^m\right)+\left(\sigma_2-\sigma_1\right)\left(x_C^{m+1}-x_C^m\right)\nn\\
    &=-\frac{\left(x_A^{m+1}-x_A^m\right)^2}{\eta_1\Delta t_m}
    -\frac{\left(x_B^{m+1}-x_B^m\right)^2}{\eta_2\Delta t_m}
    -\frac{\left(x_C^{m+1}-x_C^m\right)^2}{\eta_3\Delta t_m}
    -\Delta t_m\sum_{j=1}^3 \left(\partial_{s_j}\mu_j^{m+1}, \partial_{s_j}\mu_j^{m+1}\right)^h_{\Gamma_j^m}.
\end{align}
From \cite[Lemma 3.2]{bao2024structure1}, we know that for sufficiently large $k(\mat n_j)$, there holds 
    \begin{align}\label{eqn:num_ener_stable_pf5}
     \left(\mat{Z}_{K, j}(\mat{n}_j^m)\partial_{s_j}\mat X_j^{m+1}, \partial_{s_j}\mat X_j^{m+1}-\partial_{s_j}\mat X_j^{m}\right)_{\Gamma_j^m}^h
     \geq \left(\gamma(\mat n_j^{m+1}), 1\right)^h_{\Gamma_j^{m+1}}-\left(\gamma(\mat n_j^{m}), 1\right)^h_{\Gamma_j^{m}}. 
    \end{align}
Using \eqref{eqn:num_ener_stable_pf5} in \eqref{eqn:num_ener_stable_pf4}, we can conclude the energy stability of the numerical scheme \eqref{eqn:num}. 
\end{proof}

\section{Numerical results}\label{sec5}
In this section, we present several numerical examples using the SP-PFEM method to investigate the SSD behavior of double-bubble thin films. These simulations demonstrate the structure-preserving properties of the proposed method, including energy stability and area conservation, as well as its convergence, mesh quality, and computational efficiency.
The mesh ratio $R^h(t)$ and the loss area $\triangle A(t)$ at $t_m$ are defined as follows:
\begin{equation}
R^h(t)|_{t = t_m} :=
\text{max}_{1\le j\le 3}\frac{\text{max}_{1\le k\le N} \left | \mat X_{j,k}^m - \mat X_{j,k - 1}^m \right | }{\text{min}_{1\le k\le N} \left | \mat X_{j,k}^m - \mat X_{j, k - 1}^m \right | }, \quad \triangle A(t)|_{t = t_m} := A^m - A^0,
\quad
{E}(t)|_{t = t_m} = E^m,
\quad m\ge 0. 
\end{equation}
In the numerical experiments, we select two distinct anisotropic energy functions:
\begin{itemize}
    \item 2-fold anisotropy: $\gamma_j(\theta)=1+\beta_j\cos(2\theta),  \quad j = 1, 2, 3$;
    \item 4-fold anisotropy: $\gamma_j(\theta)=1+\beta_j\cos(4\theta),\quad j = 1, 2, 3$,
\end{itemize}
where $\beta_j$ denotes the degree of anisotropy of $j$-th curve. For the $k$-fold anisotropy, when $\beta_j = 0$, it represents isotropic; when $0 < \beta_j <\frac{1}{k^2-1}$, it represents weakly anisotropic; when $\beta_j \geq \frac{1}{k^2-1}$, it represents strongly anisotropic. 
During all the tests, we select the tolerance tol$=10e-8$.

\begin{example}\label{exa1}
(Convergence tests)
We test convergence by quantifying the difference between the two triple curves 
using the manifold distance {\cite{Zhao20}, defined by} 
\begin{equation*}
\text{Md}(\Gamma, \widehat{\Gamma}):=\left |(\Omega_1\backslash\Omega_2)\cup(\Omega_2\backslash\Omega_1) \right |=\left |\Omega_1 \right |+\left |\Omega_2 \right |-2\left |\Omega_1\cap\Omega_2 \right |, \nonumber
\end{equation*}
where $\Omega_1$ denotes the region enclosed by the triple curve $\Gamma=\{\Gamma_1, \Gamma_2, \Gamma_3\}$ and the substrate, while $\Omega_2$ denotes the region enclosed by the other triple curve $\widehat{\Gamma}=\{\widehat\Gamma_1, \widehat\Gamma_2, \widehat\Gamma_3\}$ and corresponding substrate. Here, $| \cdot |$ represents the area of the region.
Let $\mat X_j^m$ denote numerical approximation of surface with mesh size $h$ and time step $\ttau$, then introduce approximate solution between interval $[t_m, t_{m+1}]$ as
\begin{equation}
\mat X_{j, \ttau}^{h}(\rho_j, t)=\frac{t_{m+1}-t}{\ttau}\mat X_j^m(\rho_j)+\frac{t-t_m}{\ttau}\mat X_j^{m+1}(\rho_j), \quad\rho_j\in\mathbb{I}_j,\quad j=1, 2, 3. 
\end{equation}
This gives the evolving curve $\Gamma_{h, \ttau} = \mat X_{h, \ttau}(\mathbb{I}, t)$, then we further define the errors by  
\begin{equation*}
e_{h, \ttau}(t)=\text{Md}(\Gamma_{h, \ttau}, \Gamma_{\frac{h}{2}, \frac{\ttau}{4}}). 
\end{equation*}
In this example, we choose the initial data as
\begin{equation*}
    \mat X_1^0(\rho_1) = \left(4 + 2\cos\left(\pi-\frac{\pi\rho_1}{2}\right), \sin\left(\pi-\frac{\pi\rho_1}{2}\right)\right),  \quad \mat X_2^0(\rho_2) = \left(4 + 2\cos\left(\frac{\pi\rho_2}{2}\right), \sin\left(\frac{\pi\rho_2}{2}\right)\right),\quad  \mat X_3^0(\rho_3)=(4,\rho_3).
\end{equation*}
The numerical errors and orders of the SP-PFEM with 2-fold and 4-fold anisotropies are shown in Figures \ref{fig:2}-\ref{fig:5}, respectively. We can observe that the order of $O({\ttau} + h^2)$ is consistently exhibited across different anisotropic cases.
\end{example}

\begin{example}\label{exa2} (Mesh quality tests)
In this example, we use the same initial data as Example \ref{exa1}, and we investigate the impact of different parameters on the mesh ratio \( R^h(t) \) for both types of anisotropic energy functions. As observed in Figures \ref{fig:6}-\ref{fig:7}, the mesh ratio \( R^h(t) \) exhibits minor fluctuations but ultimately stabilizes over time. This behavior demonstrates that the SP-PFEM is capable of maintaining good mesh quality throughout long-term evolution.   
\end{example}

\begin{figure}[!htp]
\centering
\includegraphics[width=0.49\textwidth]{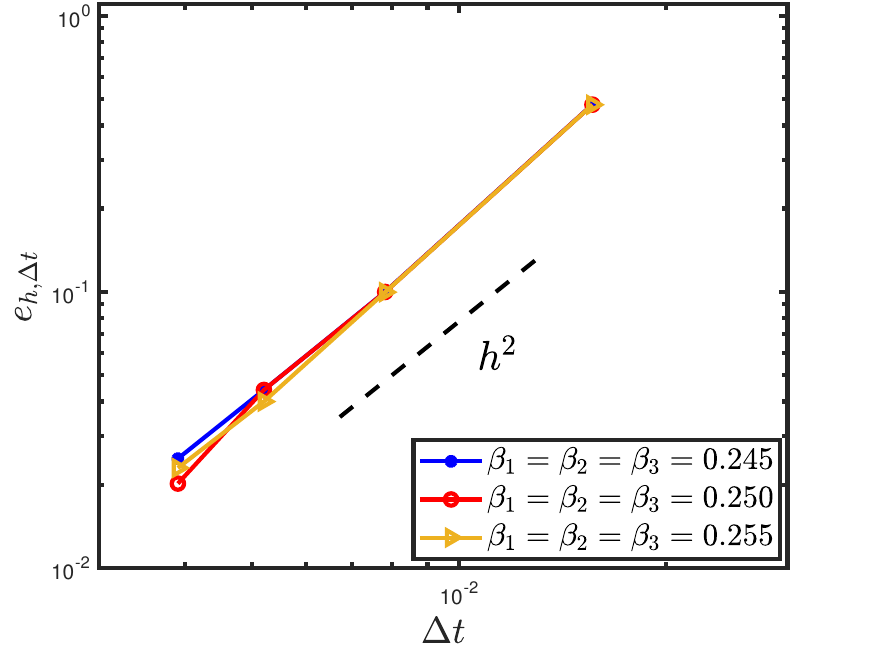}
\includegraphics[width=0.49\textwidth]{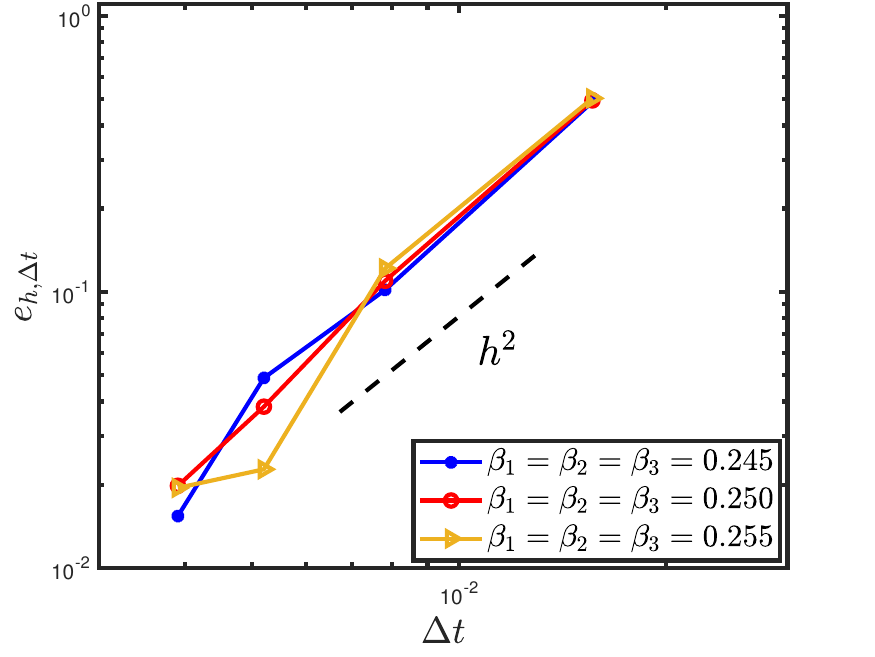}
\caption{Plot of numberical errors employing SP-PFEM at $\sigma_1 =\sigma_2 = -0.7$ (left panel) and $\sigma_1 =\sigma_2 = 0.7$ (right panel) for $2$-fold anisotropy. The parameters are selected as $\eta_1 =\eta_2 =\eta_3 = 100$, and $t_m = 1$. } 
\label{fig:2}
\end{figure}

\begin{figure}[!htp]
\centering
\includegraphics[width=0.49\textwidth]{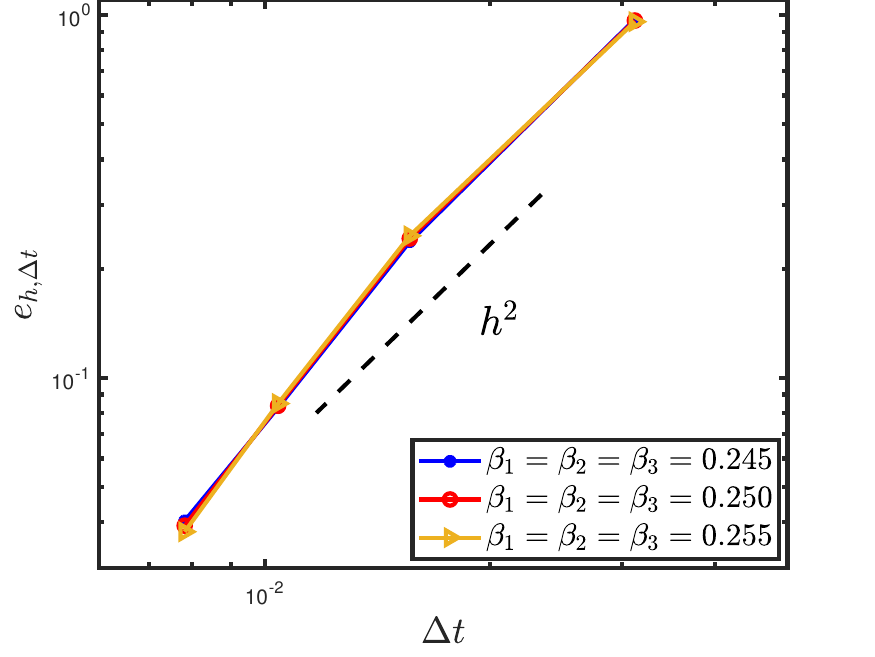}
\includegraphics[width=0.49\textwidth]{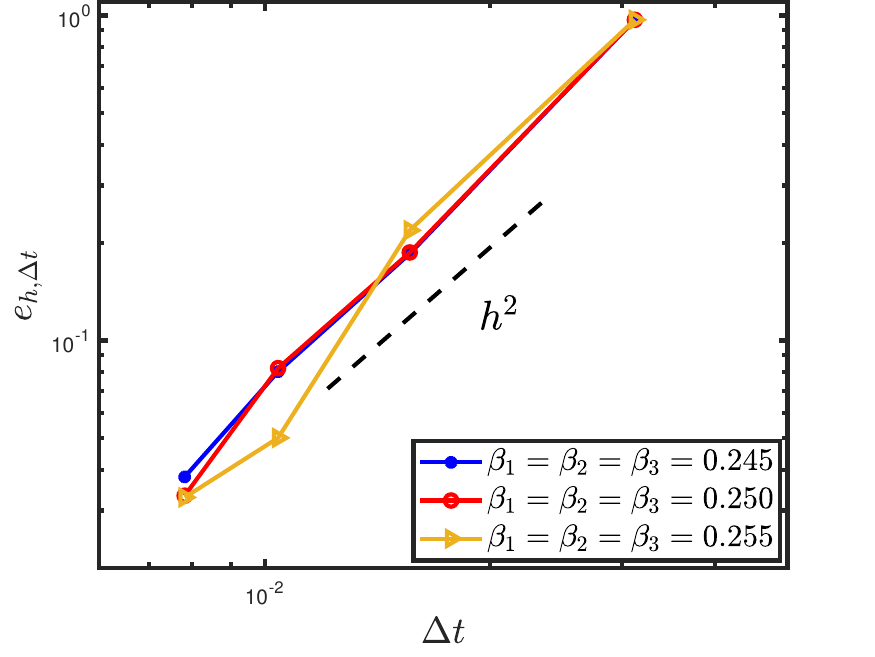}
\caption{Plot of numberical errors employing SP-PFEM at $\sigma_1 =\sigma_2 = -0.7$ (left panel) and $\sigma_1 =\sigma_2 = 0.7$ (right panel) for $2$-fold anisotropy. The parameters are selected as $\eta_1 =\eta_2 =\eta_3 = 100$, and $t_m = 2$. } 
\label{fig:3}
\end{figure}

\begin{figure}[!htp]
\centering
\includegraphics[width=0.49\textwidth]{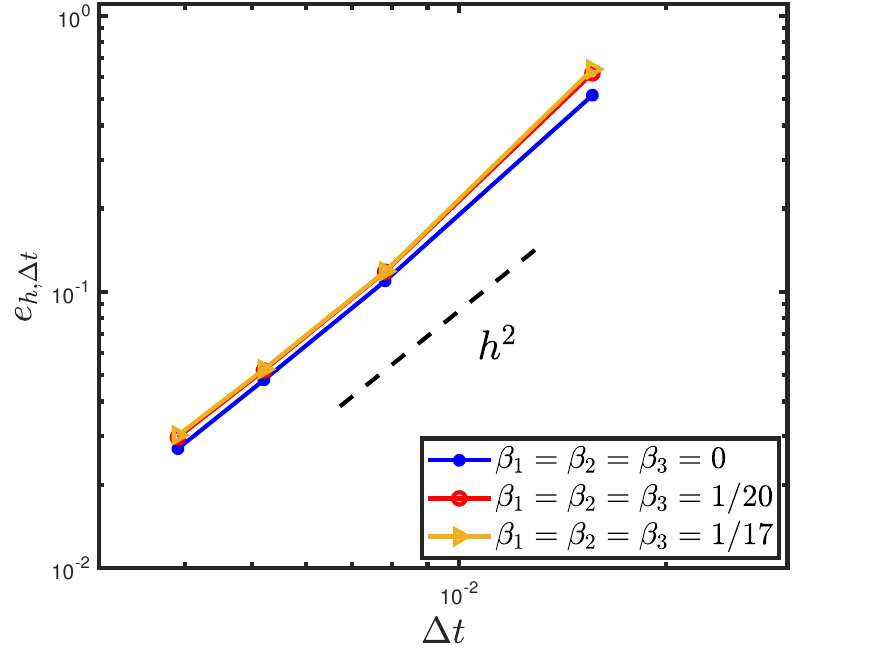}
\includegraphics[width=0.49\textwidth]{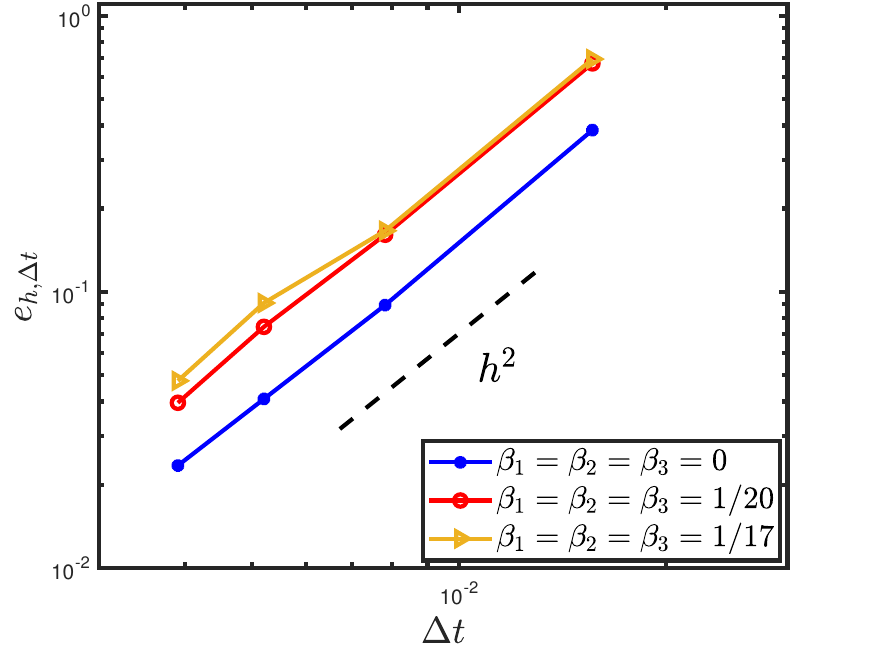}
\caption{Plot of numberical errors employing SP-PFEM at $\sigma_1 =\sigma_2 = -0.7$ (left panel) and $\sigma_1 =\sigma_2 = 0.7$ (right panel) for $4$-fold anisotropy. The parameters are selected as $\eta_1 =\eta_2 =\eta_3 = 100$, and $t_m = 1$. } 
\label{fig:4}
\end{figure}

\begin{figure}[!htp]
\centering
\includegraphics[width=0.49\textwidth]{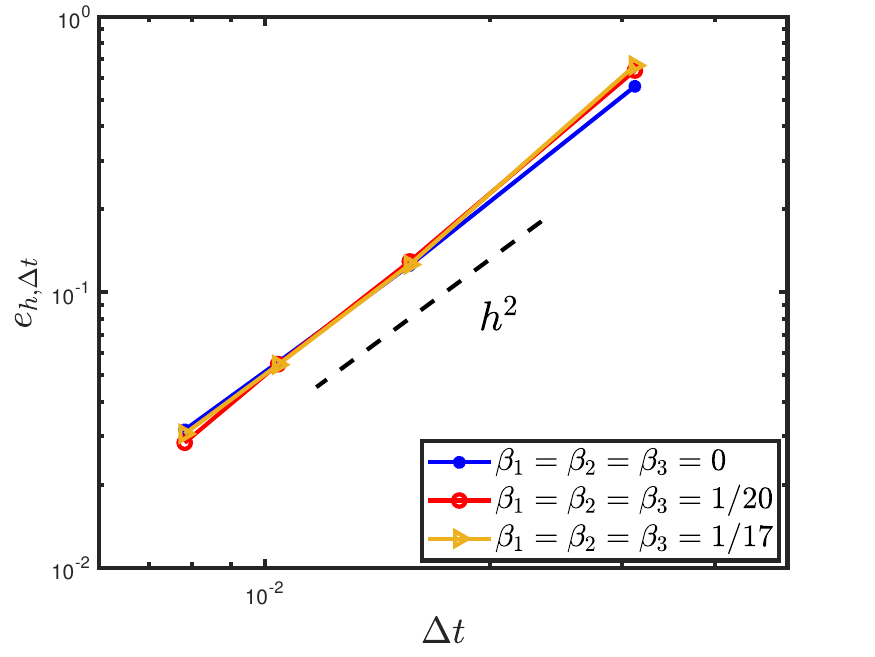}
\includegraphics[width=0.49\textwidth]{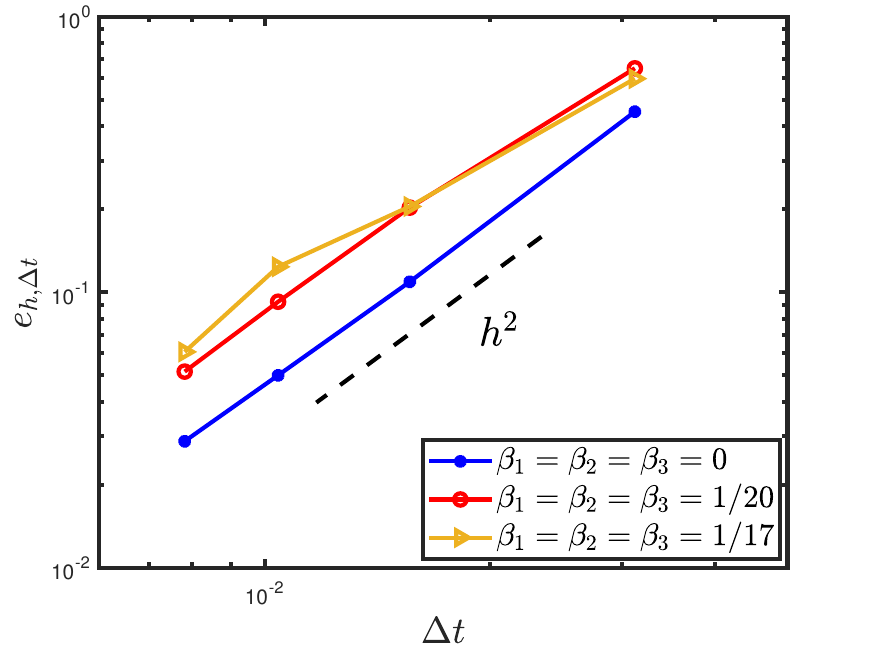}
\caption{Plot of numberical errors employing SP-PFEM at $\sigma_1 =\sigma_2 = -0.7$ (left panel) and $\sigma_1 =\sigma_2 = 0.7$ (right panel) for $4$-fold anisotropy. The parameters are selected as $\eta_1 =\eta_2 =\eta_3 = 100$, and $t_m = 2$. } 
\label{fig:5}
\end{figure}

\begin{figure}[!htp]
\centering
\includegraphics[width=0.45\textwidth]{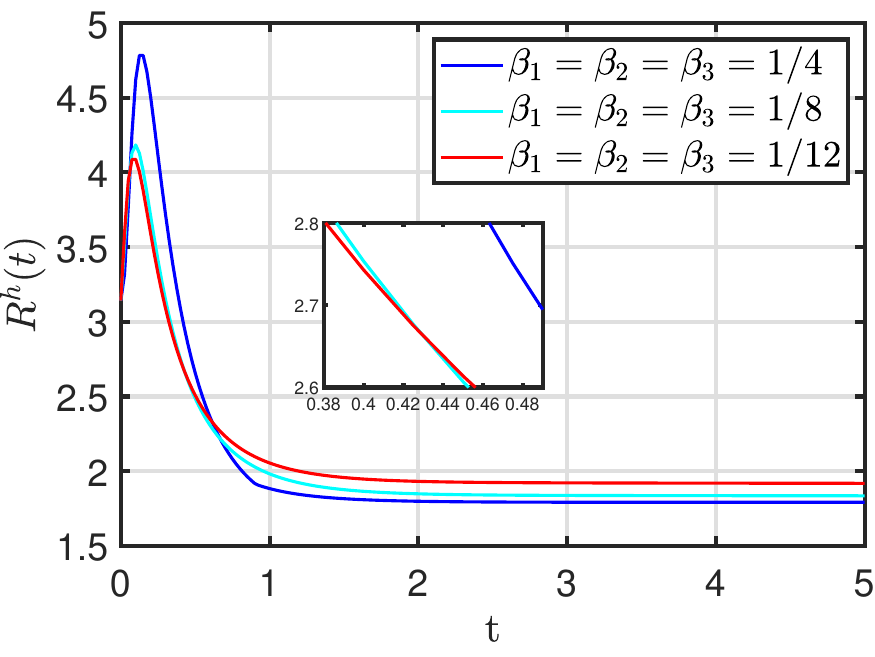}\hspace{0.5cm}
\includegraphics[width=0.45\textwidth]{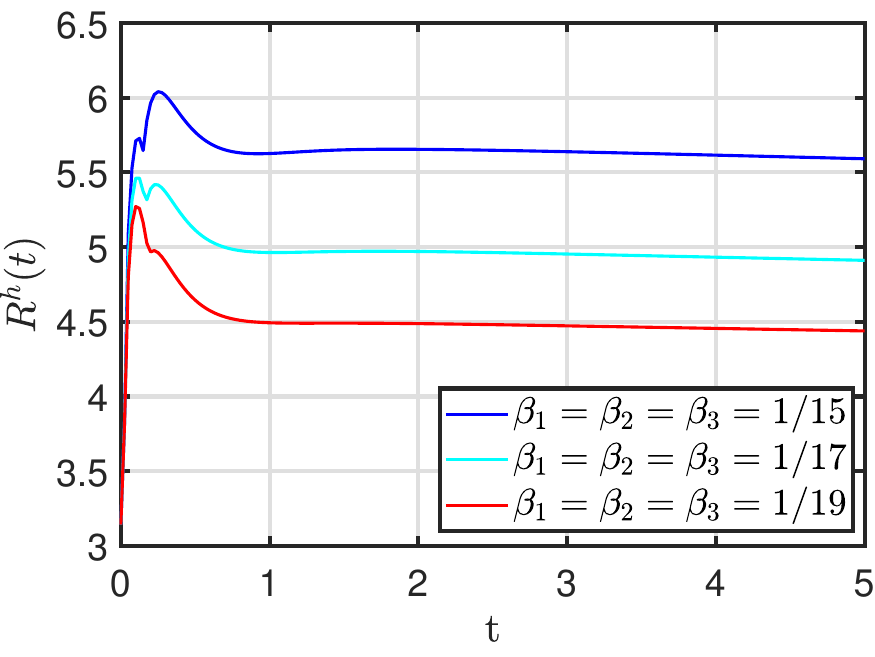}
\caption{Time evolution of the mesh ratio $R^h(t)$ for the two cases of $2$-fold (left panel) and $4$-fold (right panel) anisotropy. The parameters are chosen as $\ttau = 1/40$, $N_1 =N_2 =N_3 = 128$, $\eta_1 =\eta_2 =\eta_3 = 100$, and $\sigma_1 =\sigma_2 = -0.7$.} 
\label{fig:6}
\end{figure}

\begin{figure}[!htp]
\centering
\includegraphics[width=0.45\textwidth]{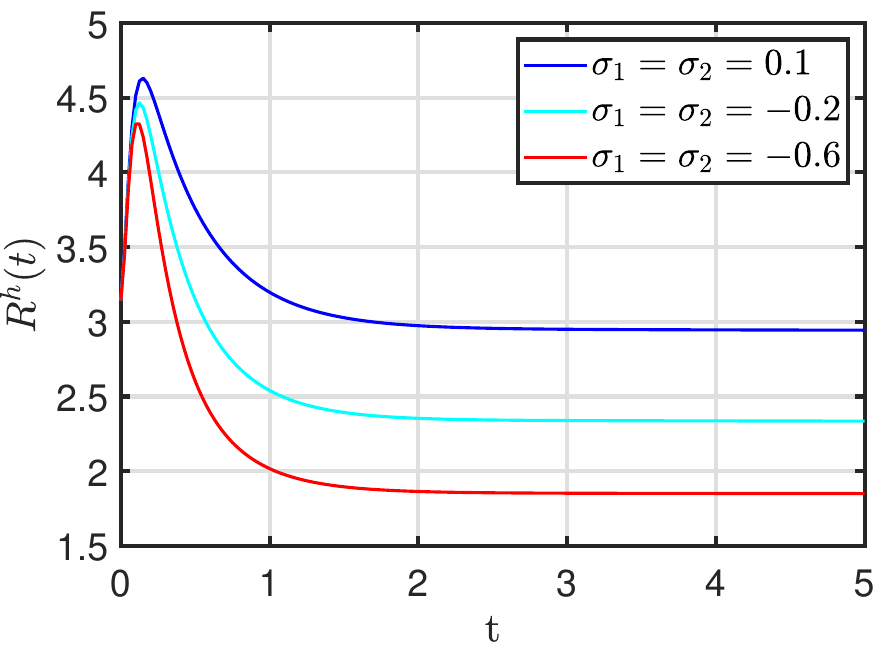}\hspace{0.5cm}
\includegraphics[width=0.45\textwidth]{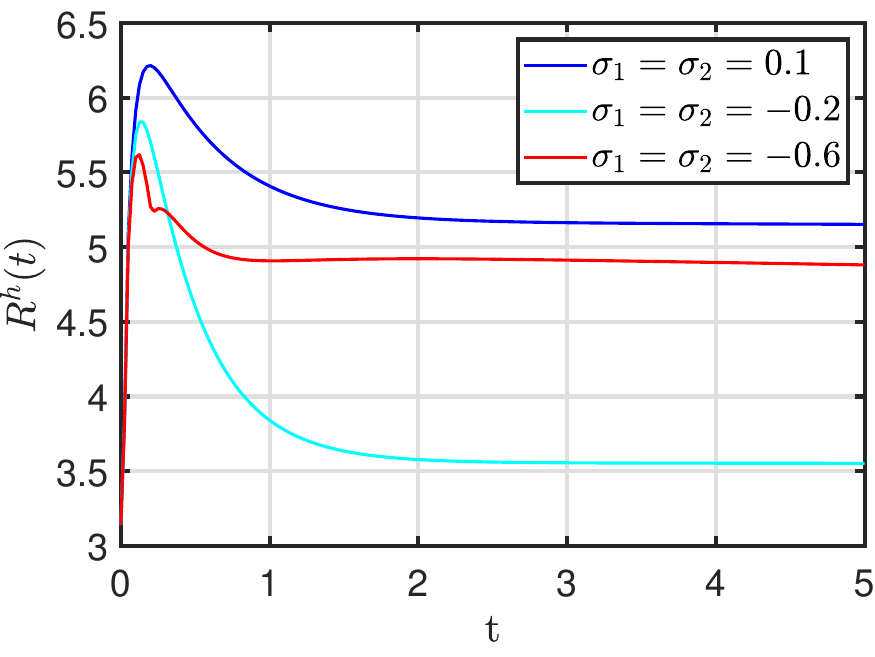}
\caption{Time evolution of the mesh ratio $R^h(t)$ for the two cases of $2$-fold (left panel) and $4$-fold (right panel) anisotropy. The parameters are chosen as $\ttau = 1/40$, $N_1 =N_2 =N_3 = 128$, $\eta_1 =\eta_2 =\eta_3 = 100$, and $\beta_1 =\beta_2=\beta_3 = 1/6$ (left panel), $\beta_1 =\beta_2=\beta_3 = 1/16$ (right panel).} 
\label{fig:7}
\end{figure}

\begin{example}\label{exa3}
(Area conservation $\&$ Energy stability)
In this example, we aim to verify the energy stability and area conservation properties of the SP-PFEM for the SSD of the double-bubble thin films. To achieve this, we first plot the total energy of the SP-PFEM by selecting different anisotropic strengths for the two types of anisotropic surface energy functions. As illustrated in Figure \ref{fig:8}, the results demonstrate the energy stability of the SP-PFEM under various anisotropic conditions.
In addition, to more effectively demonstrate the area conservation property of the SP-PFEM through numerical experiments, we compare its performance with that of the ES-PFEM. The time evolution of the relative area loss and total energy for both methods is illustrated in Figures \ref{fig:9} and \ref{fig:10}, corresponding to 2-fold and 4-fold anisotropy, respectively. The results show that the ES-PFEM experiences a significant relative area loss, reaching up to 7.2\% for 2-fold anisotropy and 4.2\% for 4-fold anisotropy. In contrast, the SP-PFEM exhibits exact area preservation, as expected. Furthermore, it is observed that the ES-PFEM attains a lower energy for the steady-state solution than the SP-PFEM, which may be attributed to area loss.
In our tests, we used a semi-elliptical shape as the initial configuration, consistent with Example \ref{exa1}.

\end{example}

\begin{figure}[!htp]
\centering
\includegraphics[width=0.45\textwidth]{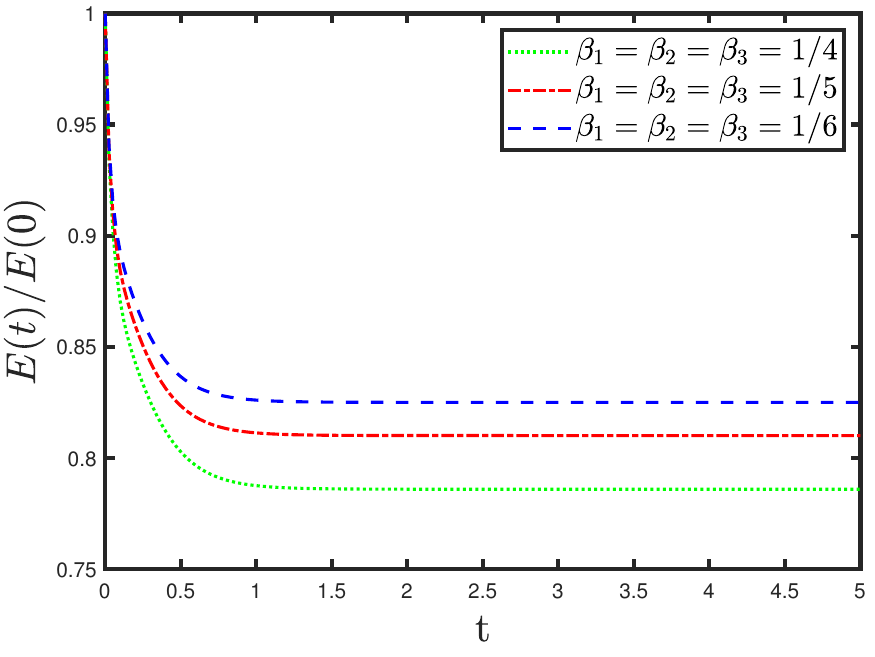}\hspace{0.5cm}
\includegraphics[width=0.45\textwidth]{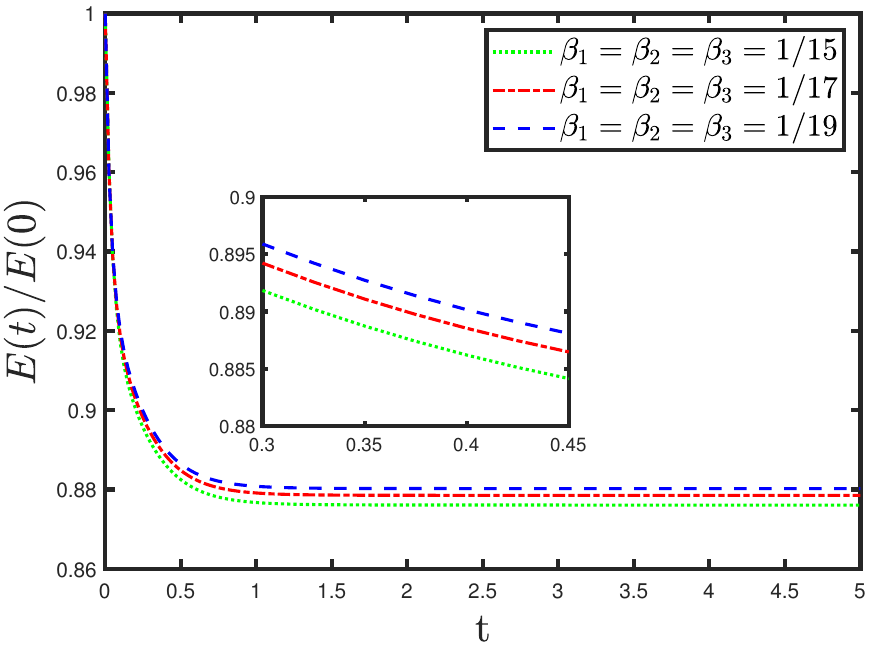}
\caption{The time history of the energy ${E}(t)/{E}(0)$ using SP-PFEM with $2$-fold (left panel) and $4$-fold (right panel) anisotropy. The parameters are chosen as $\ttau = 1/40$, $N_1 =N_2 =N_3 = 256$, $\eta_1 =\eta_2 =\eta_3 = 100$, and $\sigma_1 =\sigma_2 = -0.6$.}
\label{fig:8}
\end{figure}

\begin{figure}[!htp]
\centering
\includegraphics[width=0.3\textwidth]{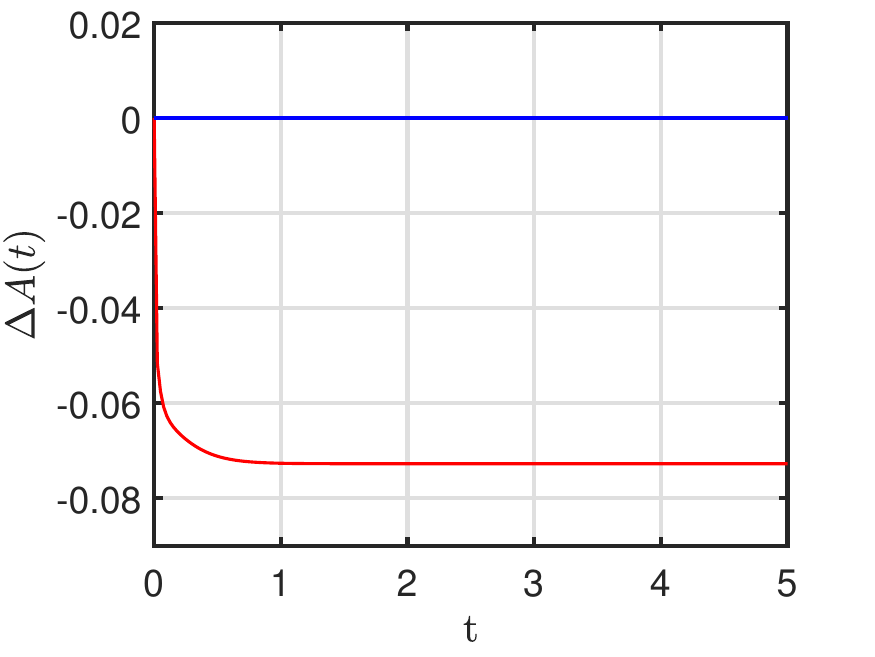}\hspace{0.5cm}
\includegraphics[width=0.3\textwidth]{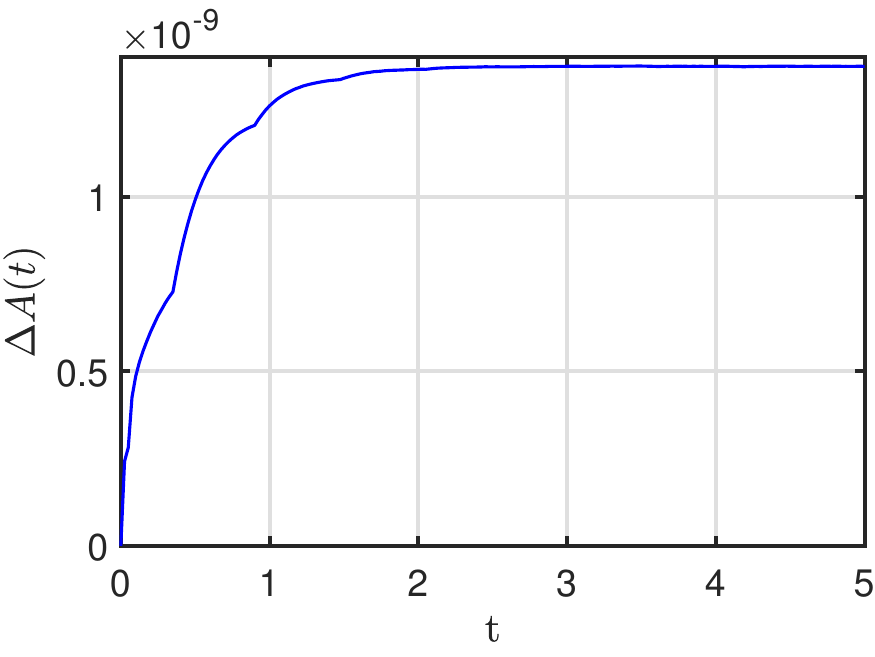}\hspace{0.5cm}
\includegraphics[width=0.3\textwidth]{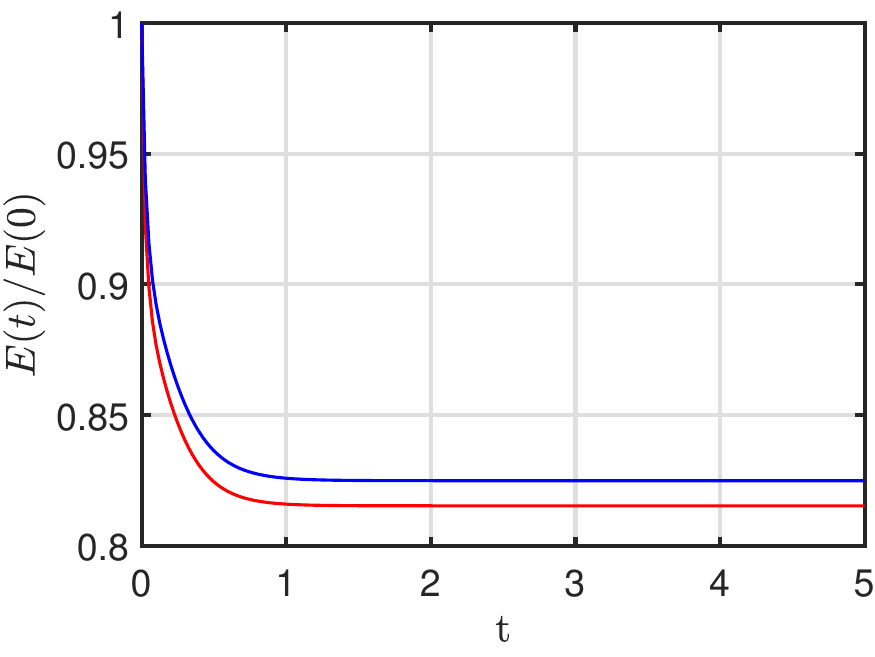}
\caption{The time history of the area loss $\Delta A(t)$ and the energy ${E}(t)/{E}(0)$. The blue line represents the results obtained using the SP-PFEM, while the red line represents the results obtained using the ES-PFEM for $2$-fold anisotropy. The degree of anisotropy is chosen as $\beta_1 =\beta_2 =\beta_3 = 1/6$. The parameters are chosen as $\ttau = 1/40$, $N_1 =N_2 =N_3 = 256$, $\eta_1 =\eta_2 =\eta_3 = 100$, and $\sigma_1 =\sigma_2 = -0.6$. }
\label{fig:9}
\end{figure}

\begin{figure}[!htp]
\centering
\includegraphics[width=0.3\textwidth]{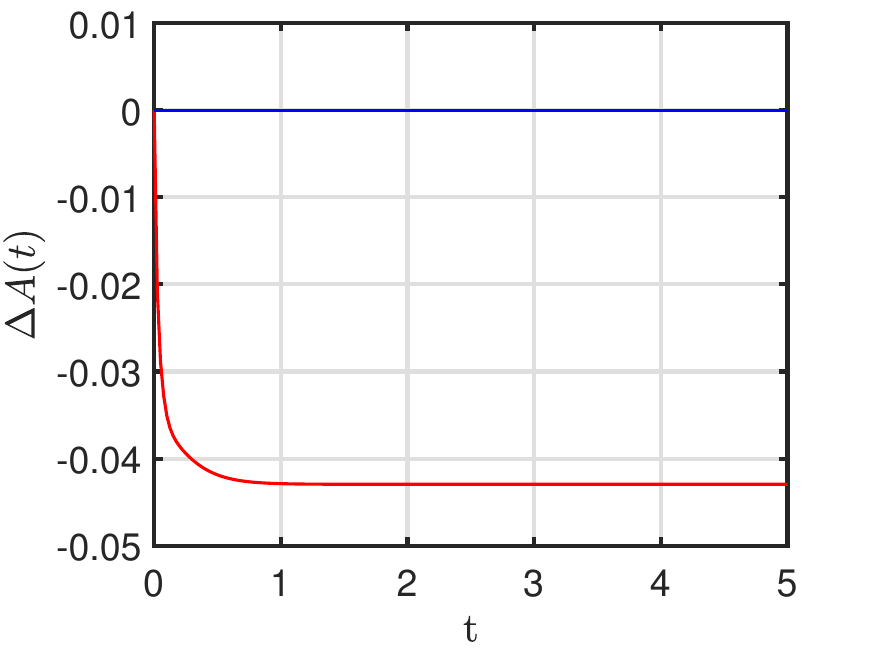}\hspace{0.5cm}
\includegraphics[width=0.3\textwidth]{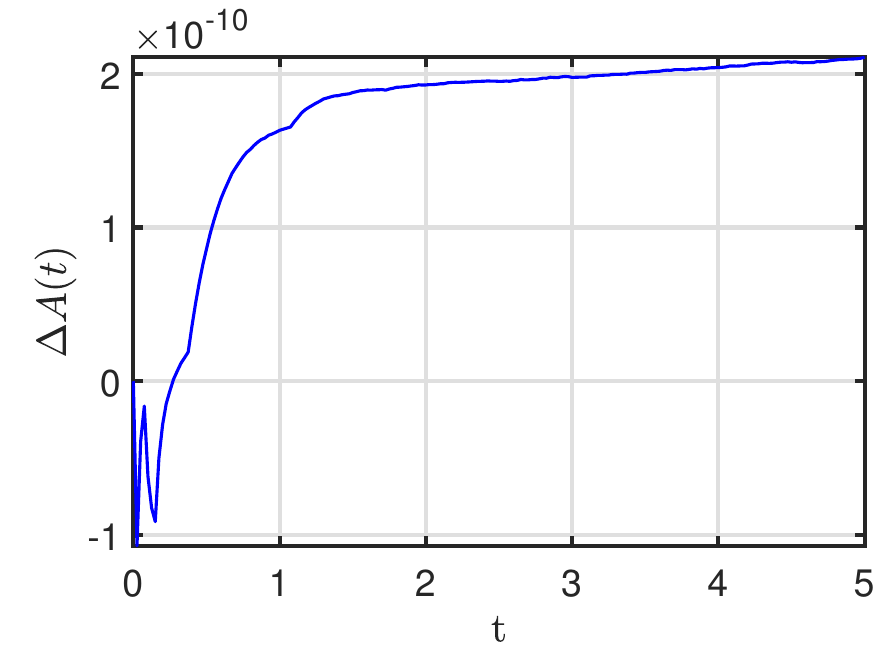}\hspace{0.5cm}
\includegraphics[width=0.3\textwidth]{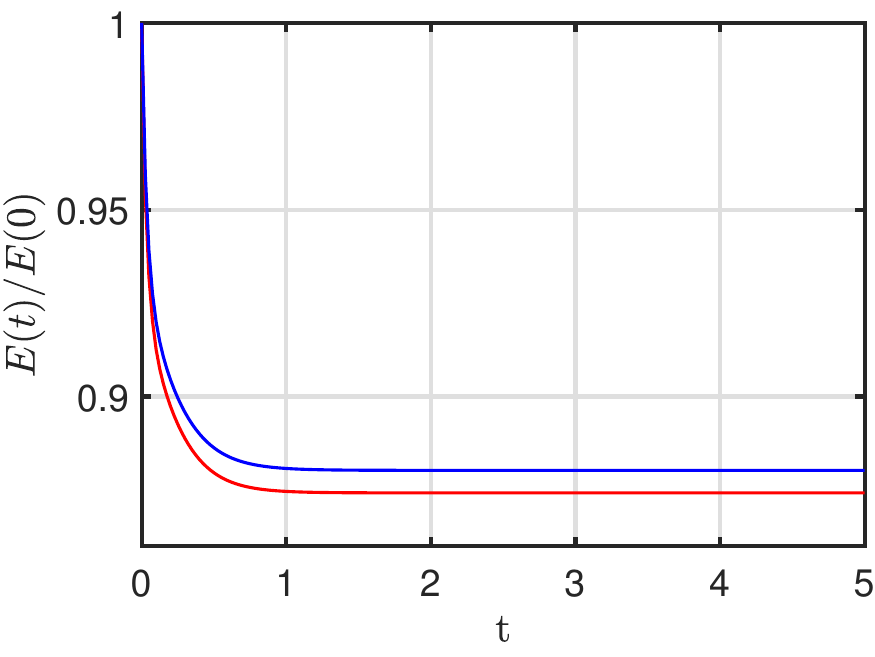}
\caption{The time history of the area loss $\Delta A(t)$ and the energy ${E}(t)/{E}(0)$. The blue line represents the results obtained using the SP-PFEM, while the red line represents the results obtained using the ES-PFEM for $4$-fold anisotropy. The degree of anisotropy is chosen as $\beta_1 =\beta_2 =\beta_3 = 1/19$. The parameters are chosen as $\ttau = 1/40$, $N_1 =N_2 =N_3 = 256$, $\eta_1 =\eta_2 =\eta_3 = 100$, and $\sigma_1 =\sigma_2 = -0.6$. }
\label{fig:10}
\end{figure}

\begin{example}\label{exa4}
(Equilibrium states)
In this example, we investigate the equilibrium states of double-bubble thin films during SSD, aiming to explore how different initial configurations evolve into equilibrium shapes under the influence of surface energy anisotropy and contact point conditions. We employ three commonly used initial shapes and demonstrate their evolution processes using the SP-PFEM, tracking their progression until equilibrium is reached.
The following are three selected initial triple curves: 
\begin{itemize}
    \item $\mat X_1^0(\rho_1) = \left( 2\cos\left(\pi-\frac{\pi\rho_1}{2}\right), 2\sin\left(\pi-\frac{\pi\rho_1}{2}\right)\right),\quad \mat X_2^0(\rho_2) = \left(2\cos\left(\frac{\pi\rho_2}{2}\right), 2\sin\left(\frac{\pi\rho_2}{2}\right)\right),\quad \mat X_3^0(\rho_3)=(0,2\rho_3)$.
    \item $\mat X_1^0(\rho_1) = \left(3\cos\left(\pi-\frac{\pi\rho_1}{2}\right), \frac{3}{2}\sin\left(\pi-\frac{\pi\rho_1}{2}\right)\right),\quad \mat X_2^0(\rho_2) = \left( 3\cos\left(\frac{\pi\rho_2}{2}\right), \frac{3}{2}\sin\left(\frac{\pi\rho_2}{2}\right)\right),\quad \mat X_3^0(\rho_3)=\left(0,\frac{3\rho_3}{2}\right)$.
 \item 
$\mat X_1^0(\rho_1) = \left(\left(2 + \cos\left(6\pi-3\pi\rho_1\right)\right)\cos\left(\pi-\frac{\pi\rho_1}{2}\right), (\left(2 + \cos\left(6\pi-3\pi\rho_1\right)\right)\sin\left(\pi-\frac{\pi\rho_1}{2}\right)\right),$\\
$\mat X_2^0(\rho_2) = \left(\left(2 + \cos\left(3\pi\rho_2\right)\right)\cos\left(\frac{\pi\rho_2}{2}\right), (\left(2 + \cos\left(3\pi\rho_2\right)\right)\sin\left(\frac{\pi\rho_2}{2}\right)\right),\quad\mat X_3^0(\rho_3)=(0,\rho_3).$
\end{itemize}
    
As illustrated in Figures \ref{fig:11}--\ref{fig:14}, different initial curves with the same anisotropy converge to the same equilibrium shape. This observation suggests that the equilibrium state of the system is predominantly governed by the surface energy anisotropy and contact angles, rather than the initial morphology.
Figure \ref{fig:15} presents the equilibrium shapes for the SSD problem with isotropic, weakly anisotropic, and strongly anisotropic surface energies. These figures demonstrate that the increase in the degree of anisotropy leads to more pronounced faceting and sharper corners in the equilibrium shapes. We also observe that, in the strongly anisotropic case, the equilibrium shape no longer exhibits symmetry about the third curve.
Figure \ref{fig:16} presents the equilibrium shapes obtained by employing non-uniform anisotropy parameters. From these figures, we also find that the positive or negative values of the contact angle parameters ($\sigma_1$ and $\sigma_2$) can lead to convex and concave structures, respectively.
\end{example}

\begin{figure}[!htp]
\centering
\includegraphics[width=0.3\textwidth]{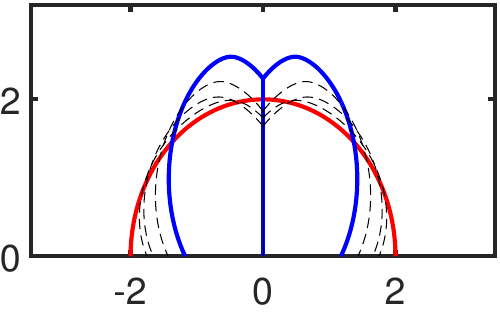}\hspace{0.5cm}
\includegraphics[width=0.3\textwidth]{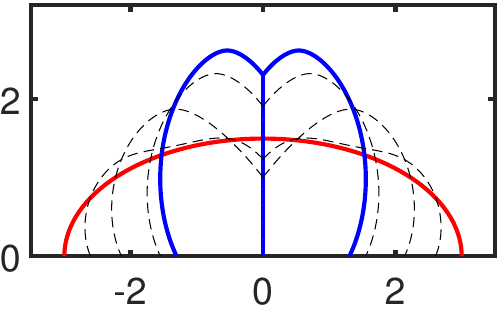}\hspace{0.5cm}
\includegraphics[width=0.3\textwidth]{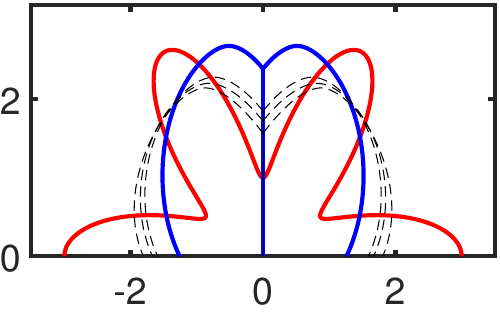}
\caption{The evolution of different initial curve (red line) to the equilibrium shape (blue line) with $2$-fold anisotropy. The degree of anisotropy are chosen as $\beta_1 =\beta_2 =\beta_3 = 1/4$. The other parameters are selected as $\ttau = 1/50$, $N_1 = N_2 = N_3 =  128$, $\eta_1 = \eta_2 = \eta_3 = 100$, and $\sigma_1 = \sigma_2 = -0.8$. }
\label{fig:11}
\end{figure}

\begin{figure}[!htp]
\centering
\includegraphics[width=0.3\textwidth]{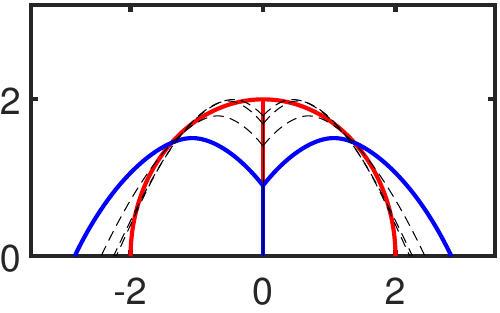}\hspace{0.5cm}
\includegraphics[width=0.3\textwidth]{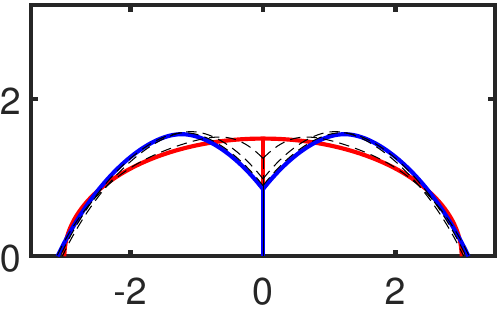}\hspace{0.5cm}
\includegraphics[width=0.3\textwidth]{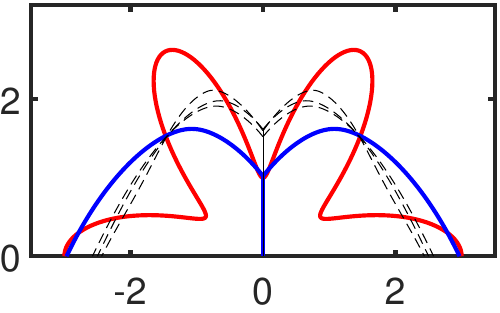}
\caption{The evolution of different initial curve (red line) to the equilibrium shape (blue line) with $2$-fold anisotropy. The degree of anisotropy are chosen as $\beta_1 =\beta_2 =\beta_3 = 1/4$. The other parameters are selected as $\ttau = 1/50$, $N_1 = N_2 = N_3 =  128$, $\eta_1 = \eta_2 = \eta_3 = 100$, and $\sigma_1 = \sigma_2 = 0.8$. }
\label{fig:12}
\end{figure}

\begin{figure}[!htp]
\centering
\includegraphics[width=0.3\textwidth]{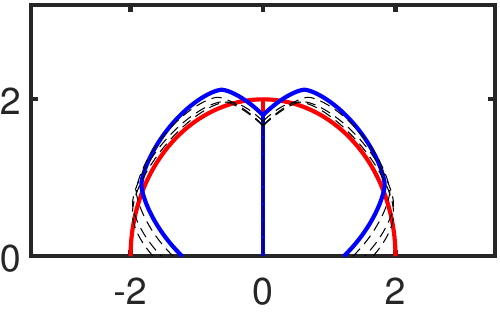}\hspace{0.5cm}
\includegraphics[width=0.3\textwidth]{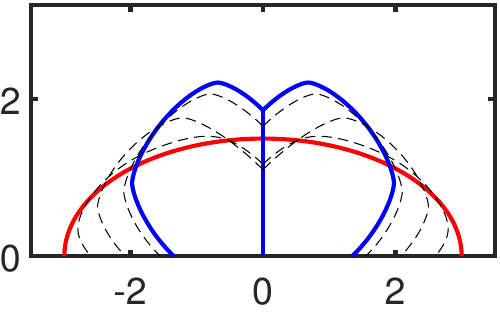}\hspace{0.5cm}
\includegraphics[width=0.3\textwidth]{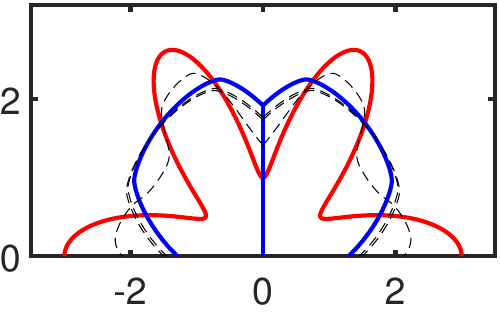}
\caption{The evolution of different initial curve (red line) to the equilibrium shape (blue line) with $4$-fold anisotropy. The degree of anisotropy are chosen as $\beta_1 =\beta_2 =\beta_3 = 1/17$. The other parameters are selected as $\ttau = 1/50$, $N_1 = N_2 = N_3 =  128$, $\eta_1 = \eta_2 = \eta_3 = 100$, and $\sigma_1 = \sigma_2 = -0.8$. }
\label{fig:13}
\end{figure}

\begin{figure}[!htp]
\centering
\includegraphics[width=0.3\textwidth]{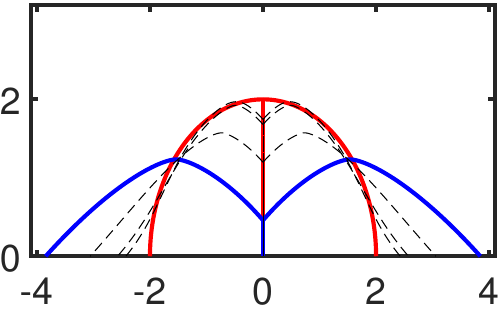}\hspace{0.5cm}
\includegraphics[width=0.3\textwidth]{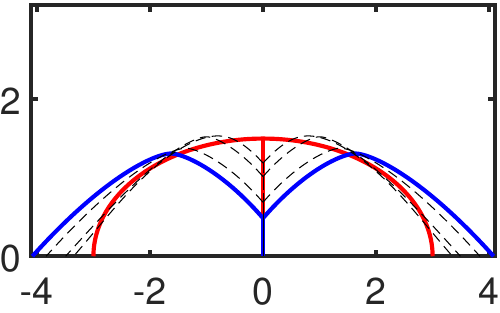}\hspace{0.5cm}
\includegraphics[width=0.3\textwidth]{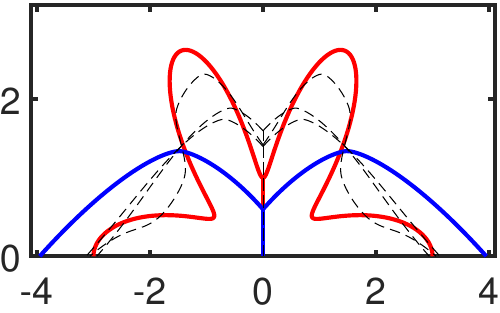}
\caption{The evolution of different initial curve (red line) to the equilibrium shape (blue line) with $4$-fold anisotropy. The degree of anisotropy are chosen as $\beta_1 =\beta_2 =\beta_3 = 1/17$. The other parameters are selected as $\ttau = 1/50$, $N_1 = N_2 = N_3 =  128$, $\eta_1 = \eta_2 = \eta_3 = 100$, and $\sigma_1 = \sigma_2 = 0.8$. }
\label{fig:14}
\end{figure}

\begin{figure}[!htp]
\centering
\includegraphics[width=0.45\textwidth]{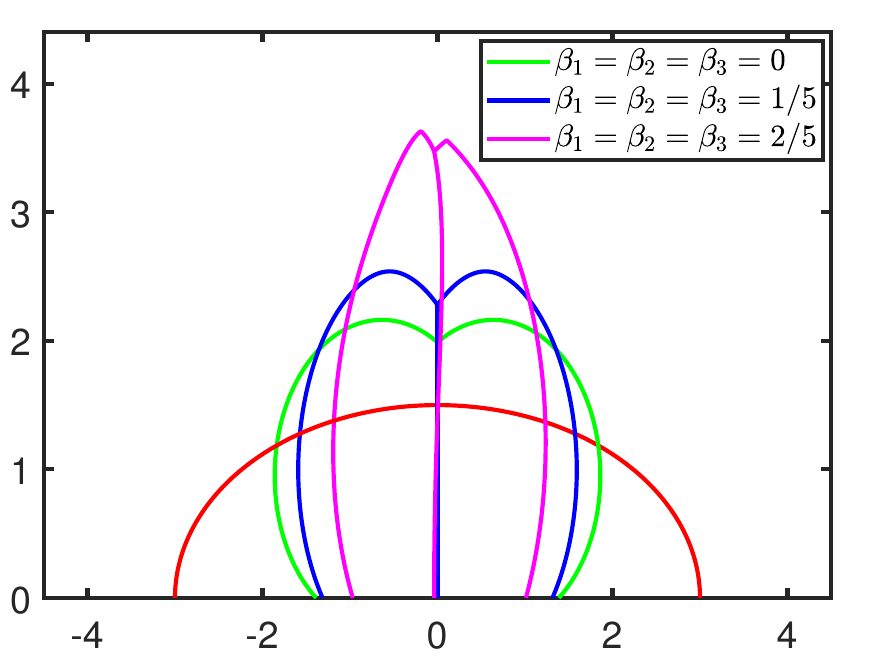}\hspace{0.5cm}
\includegraphics[width=0.45\textwidth]{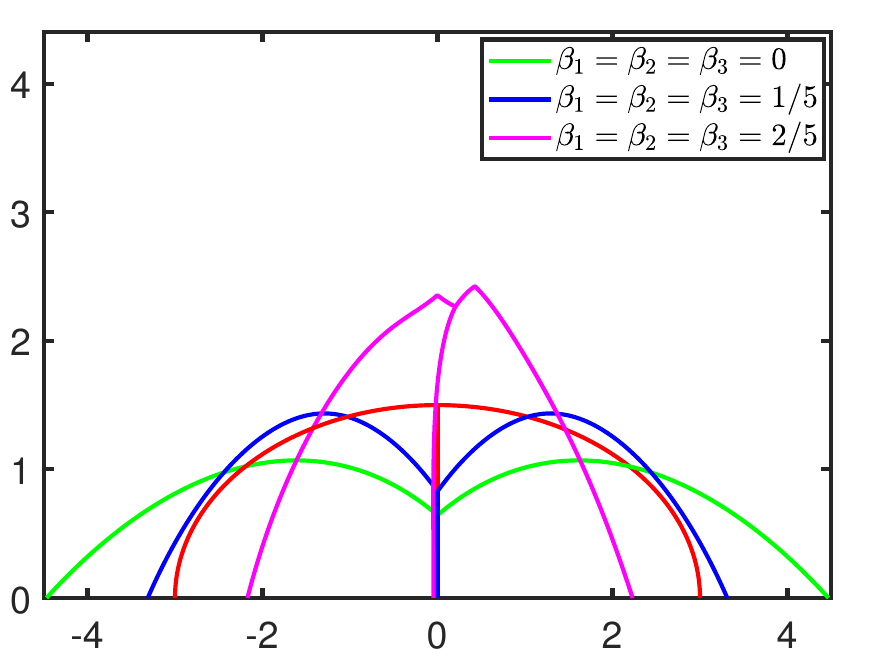}\hspace{0.5cm}
\includegraphics[width=0.45\textwidth]{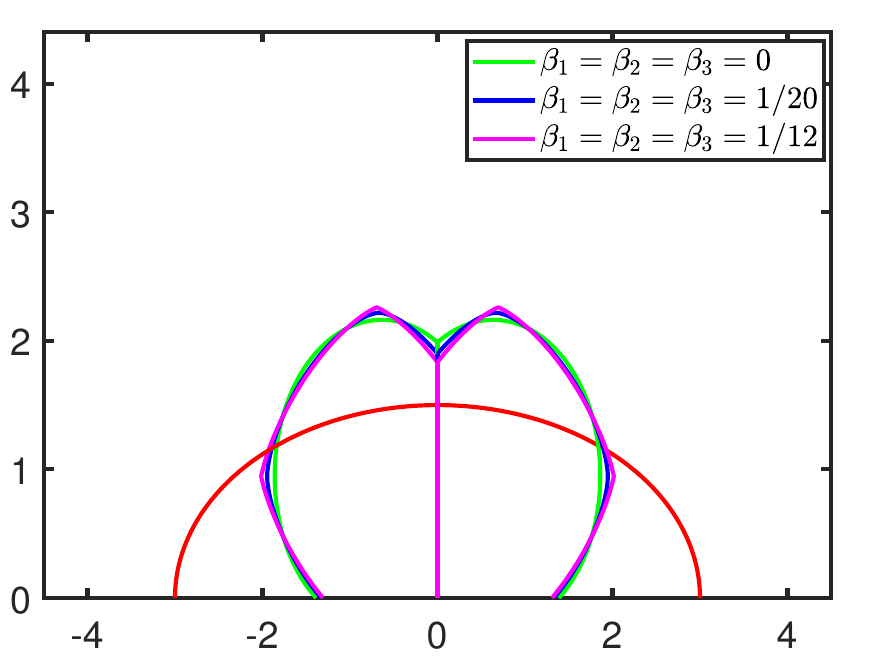}\hspace{0.5cm}
\includegraphics[width=0.45\textwidth]{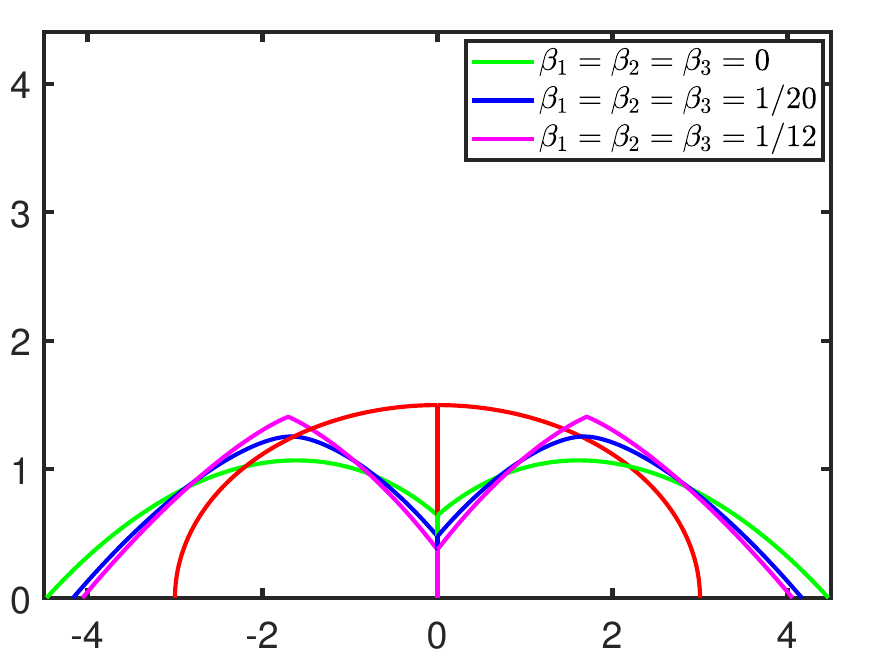}
\caption{The evolution of different initial curve (red line) to the equilibrium shape at $\sigma_1 =\sigma_2 = -0.8$ (top left panel) and $\sigma_1 =\sigma_2 = 0.8$ (top right panel) for $2$-fold anisotropy and at $\sigma_1 =\sigma_2 = -0.8$ (bottom left panel) and $\sigma_1 =\sigma_2 = 0.8$ (bottom right panel) for $4$-fold anisotropy. The other parameters are selected as $\ttau = 7/100$, $N_1 = N_2 = N_3 =  128$, $\eta_1 = \eta_2 = \eta_3 = 100$. }
\label{fig:15}
\end{figure}

\begin{figure}[!htp]
\centering
\includegraphics[width=0.45\textwidth]{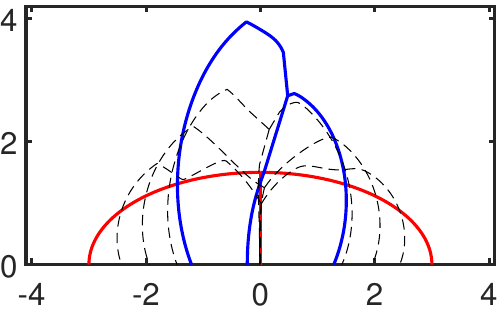}\hspace{0.5cm}
\includegraphics[width=0.45\textwidth]{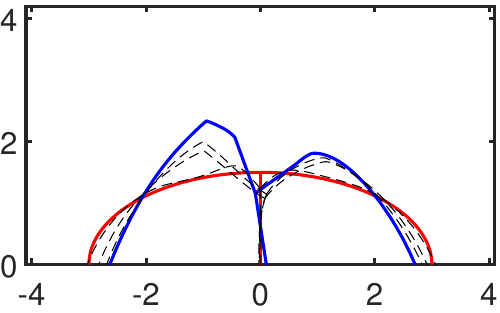}\hspace{0.5cm}
\includegraphics[width=0.45\textwidth]{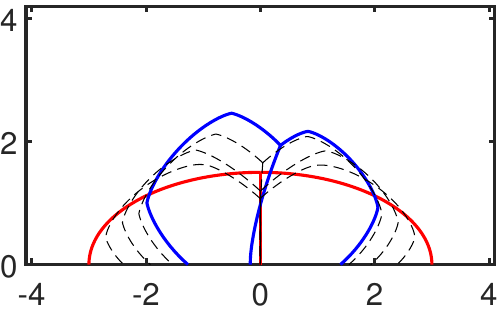}\hspace{0.5cm}
\includegraphics[width=0.45\textwidth]{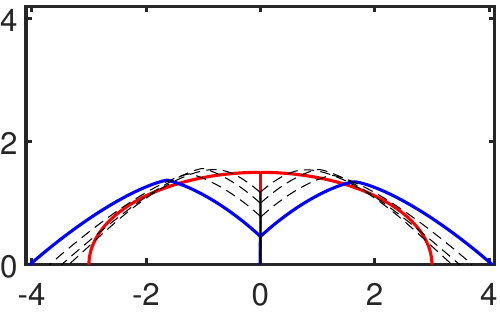}
\caption{The evolution of different initial curve (red line) to the equilibrium shape at $\sigma_1 =\sigma_2 = -0.8$ (top left panel) and $\sigma_1 =\sigma_2 = 0.8$ (top right panel) for $2$-fold anisotropy with anisotropy parameters $\beta_1 = 2/5$, $\beta_2 =1/3$, $\beta_3 = 1/4$ and at $\sigma_1 =\sigma_2 = -0.8$ (bottom left panel) and $\sigma_1 =\sigma_2 = 0.8$ (bottom right panel) for $4$-fold anisotropy with anisotropy parameters $\beta_1 = 1/14$, $\beta_2 =1/15$, $\beta_3 = 1/16$. The other parameters are selected as $\ttau = 1/50$, $N_1 = N_2 = N_3 =  128$, $\eta_1 = \eta_2 = \eta_3 = 100$. }
\label{fig:16}
\end{figure}

\begin{example}\label{exa5}
(Pinch-off events)
In this experiment, we simulate the pinch-off phenomenon that occurs during the evolution of thin film when the film becomes very flat.
As illustrated in Figures \ref{fig:17}-\ref{fig:18}, the process of SSD occurs rapidly, initially leading to the formation of ridges along the edges of the island. Subsequently, several valleys begin to develop. Over time, these ridges and valleys become more pronounced, and the valleys gradually merge near the center of the island. Eventually, the central valley reaches the substrate. At this critical point, the grid is reinitialized and the region where the film contacts the substrate is divided into several sections. 
Here we can find that the point of the first pinch-off is exactly the starting point $C$ of the third curve.
These sections then continue to evolve from their initial configurations, each undergoing further pinch-off events. This process ultimately reaches a steady state, resulting in the formation of multiple smaller islands. Through comparison, we also observe that the anisotropy strength affects the occurrence time of the pinch-off phenomenon.
In this example, we choose the initial data as
\begin{equation*}
    \mat X_1^0(\rho_1) = \left( 80\cos\left(\pi-\frac{\pi\rho_1}{2}\right), \frac{1}{10}\sin\left(\pi-\frac{\pi\rho_1}{2}\right)\right),  \quad \mat X_2^0(\rho_2) = \left(80\cos\left(\frac{\pi\rho_2}{2}\right),\frac{1}{10}\sin\left(\frac{\pi\rho_2}{2}\right)\right),\quad  \mat X_3^0(\rho_3)=\left(0,\frac{\rho_3}{10}\right).
\end{equation*}
\end{example}

\begin{figure}[!htp]
\centering
\includegraphics[width=0.8\textwidth]{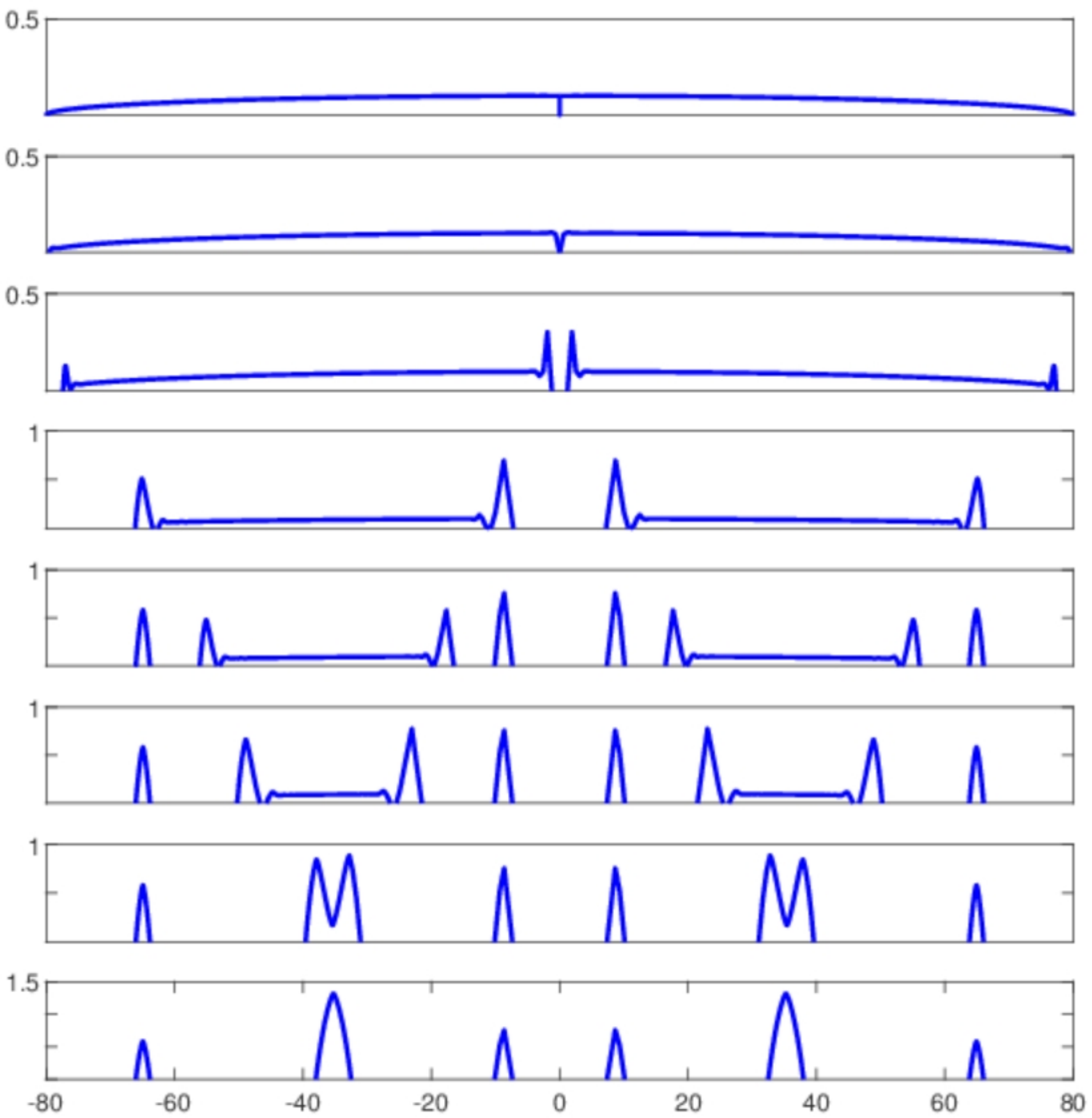}
\caption{Several snapshots in the evolution of a long, flat initial curve with $4$-fold anisotropy at times $t = 0, 0.020, 0.120, 5.120, 7.615, 16.615, 60.115, 67.625$ (from top to bottom). The degree of anisotropy are chosen as $\beta_1 =\beta_2=\beta_3 = 1/15$. The other parameters are selected as $\ttau = 1/200$, $N_1 = N_2 = N_3 =  200$, $\eta_1 = \eta_2 = \eta_3 = 100$, and $\sigma_1 = \sigma_2 = 0.8$. }
\label{fig:17}
\end{figure}

\begin{figure}[!htp]
\centering
\includegraphics[width=0.8\textwidth]{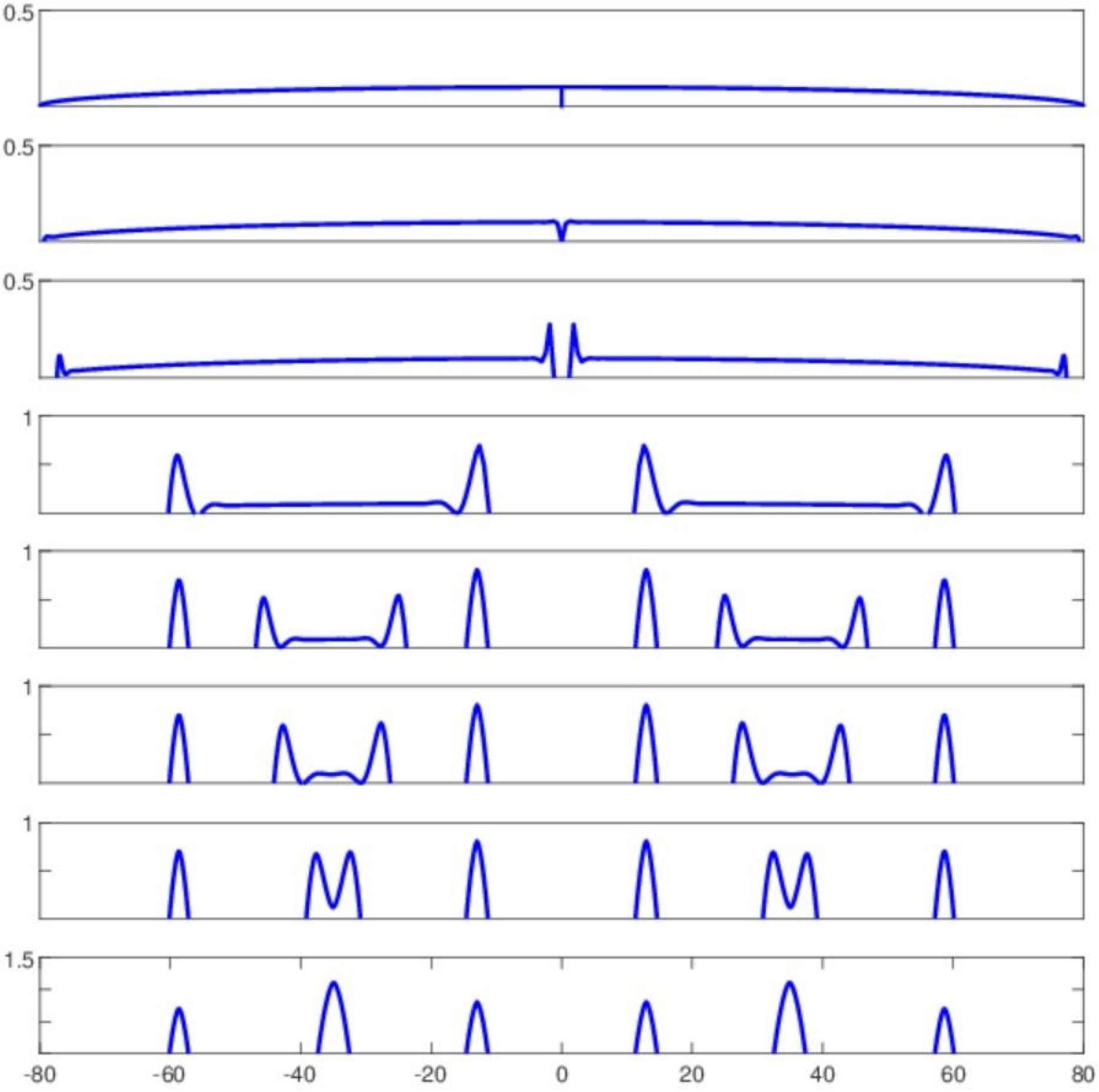}
\caption{Several snapshots in the evolution of a long, flat initial curve with $4$-fold anisotropy at times $t=0, 0.025, 0.125, 14.330, 19.325, 24.325, 39.325, 45.125$ (from top to bottom). The degree of anisotropy are chosen as $\beta_1 =\beta_2=\beta_3 = 1/19$. The other parameters are selected as $\ttau = 1/200$, $N_1 = N_2 = N_3 =  200$, $\eta_1 = \eta_2 = \eta_3 = 100$, and $\sigma_1 = \sigma_2 = 0.8$. }
\label{fig:18}
\end{figure}

\section{Conclusions}\label{sec6}
In this study, we develop a comprehensive framework for modeling SSD of double-bubble thin films with both weakly and strongly anisotropic surface energies. By combining thermodynamic variation with a smooth vector-field perturbation method, we rigorously derive a sharp-interface model for the system. Additionally, we construct a structure-preserving PFEM that ensures area conservation and energy stability for the proposed sharp-interface model. Extensive numerical experiments are conducted to validate the proposed method, demonstrating its convergence, structure-preserving properties, and superior mesh quality.
Furthermore, we investigate several specific evolution processes in the SSD of double-bubble thin films, including the evolution of small films and the pinch-off of long islands. These findings offer valuable insights into the underlying mechanisms of SSD in multiple thin films and highlight the versatility and robustness of our proposed method.

\bibliographystyle{elsarticle-num}
\bibliography{thebib}
\end{document}